\newcommand\R{{\mathbb R}}
\def\AA{{\mathcal A}}
\def\BB{{\mathcal B}}
\def\DD{{\mathcal D}}
\def\FF{{\mathcal F}}
\def\HH{{\mathcal H}}
\def\LL{{\mathcal L}}
\def\NN{{\mathcal N}}
\def\OO{{\mathcal O}}
\def\UU{{\mathcal U}}
\def\BBB{{\mathscr B}}
\def\EEE{{\mathscr E}}
\def\HHH{{\mathscr H}}
\def\LLL{{\mathscr L}}
\def\MMM{{\mathscr M}}
\def\OOO{{\mathscr O}}
\def\RRR{{\mathscr R}}
\def\OOO{{\frak O}}
\def\cc{{\frak c}}
\def\eps{{\varepsilon}}
\def\wto{{\,\rightharpoonup\,}}
\def\Lloc{L_{\rm loc}}
\def\fraka{{{\mathfrak {a}}}}
\def\frakb{{\frak b}}
\def\frakP{{\frak P}}
\newcommand{\Nt}{|\hskip-0.04cm|\hskip-0.04cm|}
\newtheorem{theo}{Theorem}[section]
\newtheorem{prop}[theo]{Proposition}
\newtheorem{lem}[theo]{Lemma}
\newtheorem{cor}[theo]{Corollary}
\newtheorem*{thm*}{Theorem}
\theoremstyle{remark}
\newtheorem*{ex*}{Example}
\theoremstyle{definition}
\numberwithin{equation}{section}
\newcommand{\be}{\begin{equation}}
\newcommand{\ee}{\end{equation}}
\newcommand{\ba}{\begin{aligned}}
\newcommand{\ea}{\end{aligned}}
\newcommand{\beqn}{\begin{equation}}
\newcommand{\eeqn}{\end{equation}}
\newcommand{\bear}{\begin{eqnarray}}
\newcommand{\eear}{\end{eqnarray}}
\newcommand{\bean}{\begin{eqnarray*}}
\newcommand{\eean}{\end{eqnarray*}}
\newcommand{\Black}{\color{black}}
\title
{On the Voltage-Conductance kinetic equation 
 }
\date{\today}
\author[C. Fonte Sanchez]{Claudia Fonte Sanchez}
 \author[S. Mischler]{St{\'e}phane Mischler}
\address[S.~Mischler]{Centre de Recherche en Math\'ematiques de
  la D\'ecision (CEREMADE, CNRS UMR 7534),
  Universit\'es PSL \& Paris-Dauphine, Place de Lattre de
  Tassigny, 75775 Paris 16, France \& Institut Universitaire de France (IUF)}
\email{mischler@ceremade.dauphine.fr}
\address[C. Fonte Sanchez]{Universit\'e Grenoble Alpes, Inria, 38000 Grenoble, France}
\email{claudia.fonte-sanchez@inria.fr}
\subjclass[2020]{35Q84, 35B40, 47D06}
\keywords{Kinetic Fokker-Planck equation, Krein-Rutman theorem, ultracontractivity, long-time asymptotic behavior}
\begin{document}

\begin{abstract}
We consider the nonlinear Voltage-Conductance kinetic equation arising in neuroscience. 
We establish the existence of solutions in a weighted $L^\infty$ framework in a weak interaction regime. 
We also prove the linear asymptotic exponential  stability of the steady state making constructive 
a recent estimate of \cite{dou2023voltageconductance}. 
Both results are based in a fundamental way on some ultracontractivity property of the
flow associated to the  linear (possibly time dependent) Voltage-Conductance kinetic equation. 
\end{abstract}

\maketitle

\tableofcontents

\section{Introduction}

The integrate-and-fire (IF) neuron model has proven to be a valuable tool in computational neuroscience for simulating the behavior of individual cortical neurons \cite{mclaughlin2000neuronal, rangan2005modeling,brette2005adaptive}. However, scaling this approach to model large cortical regions presents significant challenges.  Both the computational demands of simulating vast networks and the qualitative analysis of their complex dynamics become increasingly difficult.
 In this context, the Voltage-Conductance kinetic (VCk) equation emerges as a population-level representation of the neuronal activity \cite{CaiTSMcL,MR2204080,MR2293150,MR2495462}, analogous to the approach of classical kinetic theory for charged particles \cite{MR0875086,MR1200643,MR1634851,MR2721875,MR3778533}, and as an alternative to the  kinetic Fitzhugh-Nagumo model  \cite{FitzHugh,Nagumo,MR3465438,MR4061402}.  

Consider a network of IF neurons, where each neuron (indexed by $j$ out of $N$ total) is described by two state variables: voltage, $v^j$, and conductance, $y^j$.  The dynamics of the $j$th neuron during periods between spikes are governed by the following equations:
\bear \label{eq_sing1}
\frac{dv^j}{dt}&=&-(v^j-v_R)-y^j(v^j-v_E),\\ \label{eq_sing2}
\frac{dy^j}{dt}&=&-y^j+y_\ast+\frac{\cc}{N}\sum_{l,k\neq j}\delta(t-t_k^\ell). 
\eear
Here, $v_R$ represents the resting voltage (set to 0 for convenience), $v_E$ is the excitatory reversal potential, and $y_\ast>0$ corresponds to the constant external stimulus originating in the lateral geniculate nucleus (LGN) from the visual sensors (e.g., \cite{CaiTSMcL}).  The term $\delta(t-t_k^\ell)$ in (\ref{eq_sing2}) represents a Dirac delta function that models the synaptic input received by neuron $j$ from the $\ell$th spike of neuron $k$. The parameter $\cc\geq0$ scales the strength of these synaptic interactions.

In the absence of any external or network input, the voltage of the neuron naturally decays towards its resting state ($v_R$) due to the first term in the first equation. The neuron can, however, receive excitatory input from two sources, the external stimulus ($y_\ast$) and the input that arises from the activity of other neurons in the network, captured by the summation term in the second equation. These combined inputs influence the voltage of the neuron indirectly by affecting its conductance ($y^j$). The conductance acts like a gain control mechanism, modulating how effectively the neuron translates these inputs into changes in its voltage.
In complement with this dynamic,  when the neuron's voltage reaches a threshold, $v_F$, it is assumed to spike.  At this point (say, time t), the voltage is reset to the resting value, $v^j(t^+)=v_R$.

The VCk model is derived from this system by applying a closure in the mean-field limit of a large number of weakly connected neurons \cite{CaiTSMcL,MR2204080,MR2293150}. It describe the evolution of the neuronal network though the probability $F_t(v,y)$ to find neurons at time $t\ge 0$ with a membrane potential $v \in (0,v_F)$ and a conductance $y \in (0,\infty)$. We refer to \cite{MR3177631,MR4548856,salort:hal-03845918,dou2023voltageconductance} for previous mathematical analysis papers, discussions and further references.

\subsection{The VCk equation} 
The core mathematical formulation of the VCk model is a partial differential equation (PDE) known as the Voltage-Conductance kinetic equation. This equation governs the time evolution of the probability density function  $F_t(v,y)$ and reads 
\beqn\label{eq:VCk}
\partial_t F  + \partial_v(JF) + \partial_y(K_F F) - a_F \partial_{yy}^2 F = 0
\quad\text{in}\quad (0,\infty) \times \OO
\eeqn
with the shorthand $(v,y) \in \OO := (0,v_F) \times (0,\infty) $ and where the the coefficients are given by 
 \bean
 J &:=& y (v_E-v) - y_L v, 
 \\
 K_F &:=& y_* + \cc\, \NN_F - y, 
 \\
a_F &:=& a_* + \cc^2 \NN_F , 
\eean
 with $0 < v_F < v_E$,  $y_L,a_*, y_* > 0$, $\cc \ge 0$. The definition of $J$ aligns with equation \eqref{eq_sing1}, while $K_F$ and $a_F$ are derived from equation \eqref{eq_sing2} and the closure process. The interaction term in \eqref{eq_sing2} becomes a global firing rate $\NN_F$ of the limit system. It influences the evolution of $F$ through two mechanisms: a current and an internal noise. For $F_t : \bar\OO \to \R$, the global firing rate is defined as the current generated by all the neurons spiking at time $t$,  that is 
 \beqn
 \label{eq:defNNF}
 \NN_F(t) := \int_0^\infty J(v_F,y) F_t(v_F,y) dy. 
 \eeqn
It is worth noticing that the equation's nonlinearity, driven by the term $\NN_F$, is controlled by the connectivity parameter $\cc$. 
In fact, the equation becomes linear when there is no connection between neurons, which corresponds to $\cc = 0$. Additionally, similar to classical kinetic models, the noise term in the VCk equation solely affects the conductance variable.

\smallskip
 We complement the VCk equation \eqref{eq:VCk} with  an initial condition
\beqn\label{eq:VCkt=0}
F(0,\cdot) = F_0\quad\text{in}\quad \OO,
 \eeqn
specifying the probability distribution at time zero. Additionally, boundary conditions are imposed to capture specific behaviors:
  \bear \label{eq:VCktBd2}
&& F = 0  \ \hbox{ on } \ \Gamma_1 := (0,\infty) \times \Sigma_1 , \\
\label{eq:VCktBd3}
&& (JF)(0,y) =  (JF) (v_F,y)   \ \hbox{ for any } \  y > y_F, \\
\label{eq:VCktBd1}
&& K_F   F - a_F \partial_y F = 0 \ \hbox{ on } \ \Gamma_0 := (0,\infty) \times \Sigma_0 , 
 \eear
 where we define $y_F := y_L v_F/(v_E-v_F)$, 
 \bean
 \Sigma_0 := (0,v_F) \times \{ 0 \}, \quad
  \Sigma_1 = (\{ 0 \} \cup \{v_F \}) \times (0,y_F). 
 \eean
The first condition ensures that there is no net movement of the probability density across the boundary at low conductance values. Note that $y_F$ is the solution of the equation $J(y,v_F)=0$, and more precisely, it is the point where the function $y\mapsto J(y, v_F)$ changes of sign. For values of $y\leq y_F$, the vector field $J$ is negative near $v_F$ so that the trajectories move away from the boundary at $v_F$, i.e., for conductance values smaller than $y_F$,  
the neuron does not receive enough current to spike. On the other hand,  as $y$ pass the value $y_F$ the voltage of the neuron may reach $v_F$, spiking. In that case, it is reset to zero instantaneously, from were the second boundary condition for $y>y_F$. The condition in $\Gamma_0$ follows to assure mass conservation. 
 
 We note $\Sigma_{12} :=  \Sigma_1 \cup \Sigma_2$, $\Sigma_2 := (\{ 0 \} \cup \{v_F \}) \times (y_F,\infty)$, $\Sigma := \Sigma_{12} \cup \Sigma_0$, $\Gamma = (0,\infty) \times \Sigma$, $\Gamma_i = (0,T) \times \Sigma_i$, $i=0,1,2,12$, and $n_v := + 1$ for $v = v_F$, $n_v := - 1$ for $v=0$, 
 $n_0 := -1$. For further references, we also denote 
$$
\Sigma_i^{\pm} := \{(v,y) \in \Sigma_i; \, \pm J n_v > 0 \}, 
$$
so that 
$$
\Sigma_1^+ = \emptyset, \  \Sigma_1^- = \Sigma_1, \ \Sigma_2^- = \{0\} \times (y_F,\infty), \  \Sigma_2^+ = \{v_F\} \times (y_F,\infty),
$$
and we next define similarly $\Sigma_{12}^- = \Sigma_{1}^- \cup \Sigma_2^-$, $\Gamma_i^\pm := (0,T) \times \Sigma_i^\pm$. 
We will sometime summarize \eqref{eq:VCktBd1}-\eqref{eq:VCktBd2}-\eqref{eq:VCktBd3} with the shorthand 
\beqn\label{eq:VCkBd}
\RRR_F \gamma F = 0 \quad\text{on}\quad \Gamma := (0,\infty) \times \Sigma,
 \eeqn
 where $\gamma F$ denotes the trace on $\Gamma$ of the function $F$ in order to recall that $\RRR_F$ acts on the boundary. 

 \medskip
 The two important general properties of the model are that (at least formally) any solution is mass conservative, that is 
\beqn\label{eq:MassCons}
\langle F(t,\cdot) \rangle = \langle F_0 \rangle, \quad \forall \, t \ge 0, \quad \langle F \rangle :=  \int_\OO F dvdy, 
\eeqn
  and it conserves positivity, that is
\beqn\label{eq:PositiveCons}
F(t,\cdot) \ge 0, \quad \forall \, t \ge 0, \quad \hbox{ if } \quad F_0 \ge 0.
\eeqn

\subsection{Main results}

We introduce a class of weight functions that will be useful in the sequel. We say that $\omega : [0,\infty) \to [0,\infty)$ is an admissible weight function if 
$$
  \omega (y) := \langle y \rangle^k, \   k > 1, \quad\hbox{or}\quad
\omega (y) := e^{\alpha y}, \ \alpha > 0, 
$$
with $\langle y \rangle^2 := 1 + y^2$.  
We then define the weighted Lebesgue space $L^p_\omega$ associated to the norm
$$
\| f \|_{L^p_\omega} := \| f \omega \|_{L^p}.
$$

\medskip
On the one hand, we establish the existence of a solution to the nonlinear problem. 

\begin{theo}\label{theo-Exists} 
For any admissible weight function $\omega$, there exists a constant $\eta = \eta(\omega) \in (0,\infty)$ such that for any initial datum $0 \le F_0 \in L^\infty_\omega$ and any connectivity parameter $\cc \ge 0$ such that $(\cc +\cc^2) \| F_0 \|_{L^\infty_\omega} < \eta$, there exists at least one  solution $F \in L^\infty(0,\infty;L^\infty_\omega) \cap C([0,\infty);L^2_\omega)$
to the Voltage-Conductance kinetic (VCk) equation   \eqref{eq:VCk},  \eqref{eq:defNNF}, \eqref{eq:VCkt=0}, \eqref{eq:VCktBd1}, \eqref{eq:VCktBd2},  \eqref{eq:VCktBd3} which furthermore satisfies \eqref{eq:MassCons} and 
\eqref{eq:PositiveCons}. 
\end{theo}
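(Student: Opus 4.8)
The plan is to construct the solution by a fixed-point argument built on the linear theory for the time-dependent VCk equation. For a prescribed nonnegative firing-rate profile $t \mapsto \NN(t)$ (equivalently, a prescribed curve $t\mapsto F_t$ which determines the coefficients $K_F$, $a_F$ through $\NN_F$), the equation \eqref{eq:VCk}--\eqref{eq:VCkBd} becomes a linear kinetic Fokker--Planck equation with time-dependent coefficients, for which one has a well-defined solution flow. The essential input is the ultracontractivity property announced in the abstract: the linear flow maps $L^2_\omega$ (or even $L^1$) into $L^\infty_\omega$ with a quantitative norm bound depending only on the structural constants and on an a priori bound for $\NN$. Granting this, I would set up the map $\Phi : F \mapsto \tilde F$, where $\tilde F$ solves the linear problem with coefficients frozen at the value $\NN_F$, on the space
\[
  \XX_R := \bigl\{ F \in L^\infty(0,\infty;L^\infty_\omega) \cap C([0,\infty);L^2_\omega) : F \ge 0,\ \langle F_t\rangle = \langle F_0\rangle,\ \|F\|_{L^\infty(0,\infty;L^\infty_\omega)} \le R \bigr\}
\]
for a suitable $R$ comparable to $\|F_0\|_{L^\infty_\omega}$.

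The key steps, in order, are as follows. First, I would record the linear a priori estimates: mass conservation and positivity (which hold for the linear flow by the structure of the boundary conditions, exactly as asserted formally in \eqref{eq:MassCons}--\eqref{eq:PositiveCons}), together with propagation of the weighted $L^\infty_\omega$ bound. Here the weight $\omega$ enters through a Lyapunov-type computation: $\omega = \langle y\rangle^k$ or $e^{\alpha y}$ is chosen so that the drift $-y\partial_y$ together with the diffusion $a_F\partial_{yy}$ produces a favorable sign at large $y$, giving an estimate of the form $\frac{d}{dt}\|F_t\|_{L^\infty_\omega} \le (C_0 + C_1\cc\NN(t) + C_2\cc^2\NN(t))\|F_t\|_{L^\infty_\omega}$ plus a bounded-in-time contribution; combined with the ultracontractivity gain this closes a uniform-in-time bound provided $(\cc+\cc^2)$ times the relevant norm is small. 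Second, I would estimate the firing rate: $\NN_F(t) = \int_0^\infty J(v_F,y)F_t(v_F,y)\,dy$ is controlled by the trace of $F$ on $\Gamma_2^+$, which by the ultracontractivity/trace estimates for the linear flow is bounded in terms of $\|F_0\|_{L^\infty_\omega}$; this is what lets one propagate the smallness hypothesis $(\cc+\cc^2)\|F_0\|_{L^\infty_\omega} < \eta$ into a genuine a priori bound on $\NN_F$, hence keep $\Phi$ mapping $\XX_R$ into itself. Third, I would verify compactness/continuity of $\Phi$: using the gain of integrability and parabolic regularity in the $y$-variable plus the transport structure in $v$, the image $\Phi(\XX_R)$ is relatively compact in $C([0,T];L^2_\omega)$ for each finite $T$ (e.g. via an Aubin--Lions / averaging-lemma argument, or simply continuous dependence of the linear flow on the coefficient $\NN$), so that Schauder's fixed point theorem applies on each time interval; a diagonal/continuation argument then yields a global solution. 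Finally, I would check that the fixed point is indeed a weak solution of the nonlinear equation with the stated regularity and that it inherits \eqref{eq:MassCons}--\eqref{eq:PositiveCons}.

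The main obstacle I anticipate is the interplay between the two nonlinear feedback channels and the boundary. Unlike a standard Fokker--Planck fixed point, here $\NN_F$ feeds back both into the drift $K_F$ and into the diffusion coefficient $a_F = a_* + \cc^2\NN_F$, and it is defined through a boundary flux on $\Sigma_2^+$ — so one must control a trace of $F$, not just a bulk norm, uniformly in time, and the diffusion coefficient must stay bounded away from degeneracy. This forces the smallness to be phrased in terms of $(\cc+\cc^2)\|F_0\|_{L^\infty_\omega}$ rather than $\cc$ alone, and it is precisely here that the ultracontractivity estimate does the heavy lifting: it converts the merely-conserved mass into an $L^\infty_\omega$ bound with a small constant after a short time, which both bounds the firing rate and prevents $a_F$ from blowing up. The secondary technical point is handling the boundary conditions \eqref{eq:VCktBd1}--\eqref{eq:VCktBd3} in the weak formulation — in particular the reinjection condition \eqref{eq:VCktBd3} coupling the traces at $v=0$ and $v=v_F$ for $y>y_F$ — when proving continuity of the linear solution map; I would deal with this by working with the well-posedness and trace theory for the linear time-dependent problem established earlier in the paper and treating $\Phi$ as a black box with the quantitative bounds it provides.
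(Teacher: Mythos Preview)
Your overall architecture is correct and matches the paper: solve the linear problem for a prescribed firing profile, use ultracontractivity to obtain a uniform $L^\infty_\omega$ bound, control $\NN$ through that bound, and close by Schauder--Tykonov. The paper, however, sets up the fixed point not on the density $F$ but on the scalar firing rate $N\in L^\infty(0,T)$: since the nonlinearity enters only through $\NN_F(t)$, the map $\Lambda:N\mapsto\NN(f_N)$ acts on a ball $\frak N=\{0\le N\le N^*\}$ endowed with the weak $L^2(0,T)$ topology, for which convexity and compactness are trivial, and continuity of $\Lambda$ follows from an interior-H\"older compactness result plus convergence of traces. Your choice of $\XX_R$ would also work but obliges you to prove relative compactness of $\Phi(\XX_R)$ in $C([0,T];L^2_\omega)$, which is more effort for the same conclusion.

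There is one genuine gap. Your mechanism for the uniform-in-time $L^\infty_\omega$ bound is not quite right: the weighted Lyapunov computation only gives a growth rate $\kappa\lesssim 1+\|\fraka\|_\infty+\|\frakb\|_\infty$, which is in general positive, so a Gronwall inequality of the form you write does not close. The paper instead splits $\LL=\AA+\BB$ with $\AA=M\chi_R$ bounded and $\BB$ dissipative in every $L^p_{\tilde\omega}$, iterates Duhamel to write $S_\LL=V+W\star S_\LL$ with $\|V(t)\|_{\BBB(L^\infty_\omega)}\lesssim e^{-t}$ and $\|W(t,s)\|_{\BBB(L^1,L^\infty_\omega)}\lesssim e^{-(t-s)}$ (the ultracontractivity of $S_\BB$ is precisely what makes $W$ regularizing), and then invokes mass conservation $\|S_\LL(s)f_0\|_{L^1}=\|f_0\|_{L^1}$ to conclude $\|S_\LL(t)f_0\|_{L^\infty_\omega}\le C_2\|f_0\|_{L^\infty_\omega}$ uniformly in $t\ge0$. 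The smallness of $(\cc+\cc^2)\|F_0\|_{L^\infty_\omega}$ is not used to make a growth rate negative; its role is to keep $\fraka,\frakb$ in a fixed interval $[a_*,a^*]$ so that all constants, including $C_2$, depend only on $a^*$, after which one checks $\NN(f_N)\le C_3C_2\|F_0\|_{L^\infty_\omega}\le N^*$ and the map $\Lambda$ sends $\frak N$ to itself.
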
 

Inspired by  \cite{MR0153974,MR0875086,MR2721875,sanchez2023kreinrutman,CM-Landau**}, the precise definition of solutions will be given in Section~\ref{sec:ExistenceNL}. 
Our result provides in a weak interaction regime the existence of  a quite strong solution. Under the normalization hypothesis that $F_0$ is a probability measure, so that is $F(t,\cdot)$ for any $t \ge 0$, that corresponds to a weak connectivity regime, namely  $\cc > 0$ small enough.  Another framework of possible  weak  solutions has been developed in \cite[Sec.~6]{MR3177631}. 
The proof  of Theorem~\ref{theo-Exists} is based on the construction of an invariant bounded set for the uniform norm and a compactness argument (Tyknonv fixed point theorem). 

\smallskip

 It has been established in  \cite[Theorem~3]{MR3177631} that in a weak connectivity regime, and more precisely when $0 \le \cc < v_F/v_E$, there exists a probability measure $\MMM$ on $\OO$ such that 
  \bear \label{eq:VCkequilibre1}
 &&  \partial_v(J\MMM) + \partial_y(K_\MMM \MMM) - a_\MMM \partial_{yy}^2 \MMM = 0  \quad\text{on}\quad   \OO, 
 \\ \label{eq:VCkequilibre2}
&& \RRR_\MMM \gamma \MMM = 0 \quad\text{on}\quad   \Sigma, 
 \eear
and that furthermore 
$$
\MMM \in L^q, \quad \forall \, q \in [1,8/7). 
$$
This stationary state has been proved to be uniformly bounded and  linearly asymptotically stable in \cite{MR4548856} and next  linearly  asymptotic exponentially stable in \cite{dou2023voltageconductance}. We recover and slightly improve these results.

\begin{theo}\label{theo-StabLinear} For $\omega_0 := \langle y \rangle^{k_0}$ and  $\cc < \eta(\omega_0) \| \omega_0^{-1} \|_{L^1}$, there exists a probability measure $\MMM$ 
which satisfies the stationary equation \eqref{eq:VCkequilibre1}, \eqref{eq:VCkequilibre2}. 
Furthermore, for any admissible weight function $\omega$, the stationary state $\MMM$ satisfies 
$$
\MMM \in L^\infty_\omega(\OO) \cap C(\OO)  
$$
and it is linearly asymptotic exponentially stable with constructive constants. More precisely, there exist $C \ge 1$, $\lambda > 0$ constructive constants
such that for any $f_0 \in L^2_\omega \cap L^p_\omega$, $p \in [1,\infty]$,  such that $\langle f_0 \rangle = 0$, there exists a unique solution $f \in C([0,\infty);L^2_\omega)$ to the linear equation 
  \bear \label{eq:VCkequilibre-f1}
 && \partial_t f +  \partial_v(J f) + \partial_y(K_\MMM f) - a_\MMM \partial_{yy}^2 f = 0  \quad\text{on}\quad  (0,\infty) \times  \OO, 
 \\ \label{eq:VCkequilibre-f2}
&& \RRR_\MMM \gamma f = 0 \quad\text{on}\quad  (0,\infty) \times  \Sigma, 
 \eear
and this one satisfies 
$$
\| f_t \|_{L^p_\omega} \le C e^{-\lambda t} \| f_0 \|_{L^p_\omega},  
\quad \forall \, t \ge 0.
$$
\end{theo}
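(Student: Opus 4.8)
The plan is to combine the ultracontractivity of the linear (time-dependent) VCk flow—assumed available from the earlier part of the excerpt—with a Krein–Rutman / spectral analysis of the autonomous linearized generator, and then with a general spectral-mapping-type argument to transport the decay from $L^2_\omega$ to all $L^p_\omega$. First I would set up the functional analytic framework: let $\LL_\MMM$ denote the operator $f \mapsto -\partial_v(Jf) - \partial_y(K_\MMM f) + a_\MMM \partial_{yy}^2 f$ with the boundary conditions $\RRR_\MMM \gamma f = 0$, acting on $L^2_\omega$. I would first establish that $\LL_\MMM$ generates a positive, mass-conserving $C_0$-semigroup $S_\MMM(t)$ on $L^2_\omega$ (hence on $L^p_\omega$ via the ultracontractivity estimates, interpolation, and a duality argument), using the existence theory of Theorem \ref{theo-Exists} specialized to $\cc=0$-type linear coefficients and the a priori bounds. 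The stationary state $\MMM$ itself is obtained as the fixed point / eigenfunction: I would recover $\MMM \in L^\infty_\omega \cap C(\OO)$ by bootstrapping the $L^q$ regularity from \cite{MR3177631} through the ultracontractivity of the flow (the stationary state is a time-independent solution, so $\MMM = S_\MMM(t)\MMM$ and the smoothing sends it into $L^\infty_\omega$; continuity follows from parabolic interior regularity plus the structure of the boundary conditions).

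The heart of the argument is the spectral gap. I would apply a Krein–Rutman theorem in the spirit of \cite{sanchez2023kreinrutman} (or a Doblin–Harris approach): using positivity, mass conservation, and the ultracontractivity, which gives a compactness/strong-positivity input, one concludes that $0$ is the dominant eigenvalue of $\LL_\MMM$, simple, with eigenfunction $\MMM>0$ and dual eigenfunction $\phi \equiv 1$ (mass conservation), and that the rest of the spectrum lies in a half-plane $\{\mathrm{Re}\, z \le -\lambda_0\}$ for some constructive $\lambda_0>0$. On the invariant hyperplane $\{\langle f_0\rangle = 0\}$ (the complement of $\mathrm{span}\,\MMM$ via the spectral projection $\Pi f = \langle f\rangle \MMM$), the semigroup $S_\MMM(t)$ then decays. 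To make this decay quantitative and with explicit rate on $L^2_\omega$, I expect the paper uses a hypocoercivity / Lyapunov functional argument (an $H^1$-type modified energy adapted to the degenerate diffusion in $y$ only, exploiting that $J$ transports the missing coercivity in $v$), combined with a weighted Poincaré inequality for $\MMM$; alternatively a quantitative Harris-type inequality yields a spectral gap $\lambda>0$ directly. The uniqueness of $\MMM$ as a probability measure in the stated regime follows from simplicity of the eigenvalue.

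Finally I would upgrade $L^2_\omega \to L^p_\omega$ decay for all $p\in[1,\infty]$. For $p\ge 2$: write $S_\MMM(t) = S_\MMM(1)\,S_\MMM(t-1)$ for $t\ge 1$, apply $L^2_\omega \to L^\infty_\omega$ ultracontractivity on the first factor and the $L^2_\omega$ exponential decay on the second (on the zero-mass subspace), then interpolate; for $t\le 1$ use boundedness of the flow. For $p\in[1,2)$: apply the same to the dual semigroup on $L^{p'}_{\omega^{-1}}$-type spaces, using that the adjoint problem has the analogous structure and that $\phi\equiv 1$ is the dual eigenfunction (the zero-mean condition is dual-invariant). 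This gives $\|f_t\|_{L^p_\omega}\le C e^{-\lambda t}\|f_0\|_{L^p_\omega}$ with $C,\lambda$ constructive. The main obstacle I anticipate is the quantitative spectral gap on $L^2_\omega$: the diffusion acts only in $y$, the domain $\OO$ is unbounded in $y$ and has the nonstandard mixed inflow/outflow-plus-reinjection boundary conditions at $v=0,v_F$, so proving a \emph{constructive} hypocoercive estimate (rather than a soft compactness-based gap) requires carefully tracking the boundary terms in the energy identity and controlling the weight $\omega$ against the drift $K_\MMM$—this is presumably where the bulk of the technical work, and the "slight improvement" over \cite{dou2023voltageconductance}, lies.
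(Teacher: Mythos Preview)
Your outline is broadly on target---the paper does prove the result via a Doblin--Harris/Krein--Rutman mechanism combined with the ultracontractivity of Theorem~\ref{theo-Ultra}---but two points differ enough to be worth flagging.

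First, the construction of $\MMM$ is not done by importing the steady state from \cite{MR3177631} and then bootstrapping regularity. Instead the paper parametrizes the linear problem by a scalar firing rate $N\in[0,N^*]$, sets $\fraka=a_*+\cc^2N$, $\frakb=y_*+\cc N$, and applies the abstract Doblin--Harris Theorem (Section~\ref{sec:DHtheorem}) to the resulting \emph{autonomous} linear semigroup $S_{\LL_N}$ on $X=L^2_\omega$. This already yields, for each $N$, a unique normalized steady state $\MMM_N\in L^\infty_\omega$ together with the exponential decay on $\{\langle f\rangle=0\}$. One then runs a one-dimensional fixed point on $N\mapsto\NN(\MMM_N)$ to obtain $N^\sharp$ with $\MMM:=\MMM_{N^\sharp}$. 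So existence of $\MMM$ and its linear exponential stability are obtained \emph{simultaneously}, not sequentially; no appeal to the prior $L^q$ result is needed.

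Second, the crucial input you identify as ``compactness/strong-positivity from ultracontractivity'' is not quite right, and your anticipated main obstacle (a constructive hypocoercive estimate with boundary terms) is a red herring here: the paper explicitly avoids hypocoercivity. The Doblin--Harris positivity condition~\eqref{eq:NEWDoblinHarris-primal} is verified via the interior \emph{Harnack inequality} (Theorem~\ref{theo:Harnack}), which after a change of variables reduces the VCk equation to a standard kinetic Fokker--Planck form where known Harnack results apply. Ultracontractivity is used only for the strong dissipativity condition~\eqref{eq:NEWHarris-Primal-LyapunovCond} (via the splitting $\LL=\AA+\BB$ and the iterated Duhamel formula), not for the lower bound. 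Your proposed $L^p$ extension by sandwiching the $L^2$ decay between ultracontractive smoothing steps is the natural route, though the paper's written proof focuses on $L^2_\omega$.
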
 

The proof is in the spirit of the recent works \cite[Sec.~11]{sanchez2023kreinrutman}, \cite{carrapatoso2024kinetic} and \cite{CGMM**} where the linear asymptotic exponential stability for the kinetic Fokker-Planck equation set in a domain is established by either taking advantage of hypocoercivity structure (see \cite{MR2562709,MR4581432}) or either taking advantage of  positivity apparence (through mixing ans spreading) and confinement mechanism.
We rather follow that second way and more precisely we establish and use a variant of Doblin-Harris theorem as developed recently in \cite{MR2857021,zbMATH07654553,sanchez2023kreinrutman,CGMM**}.

\smallskip
Both above results use in a fundamental way the ultracontractivity property of solutions to the linear (with possibly time dependent coefficients) VCk equation
 \beqn\label{eq:VCklinear}
\partial_t f  = \LLL_{\fraka,K} f := -  \partial_v(Jf) -  \partial_y(  K f) +  \fraka  \partial_{yy}^2 f
\quad\text{in}\quad (0,\infty) \times \OO, 
\eeqn
where, for some $a^* > \max(a_*,y_*)$, 
\bean
K :=  \frakb - y, \quad  \fraka, \frakb  \in L^\infty(0,\infty), \quad  y_* \le \frakb \le a^*, \quad a_* \le  \fraka \le a^*, 
\eean 
and the evolution equation is complemented with the boundary conditions \eqref{eq:VCktBd2}, \eqref{eq:VCktBd3} and 
\beqn\label{eq:VCktBd1-linear}
 K   f - \fraka  \partial_y f = 0 \ \hbox{ on } \ \Gamma_0.
\eeqn
We will sometime summarize  \eqref{eq:VCktBd2}-\eqref{eq:VCktBd3}-\eqref{eq:VCktBd1-linear} with the shorthand 
\beqn\label{eq:LinearVCkBd}
\RRR_{\fraka,K} \gamma f = 0 \quad\text{on}\quad \Gamma.
 \eeqn

\begin{theo}\label{theo-Ultra} For any exponential weight function $\omega := e^{\alpha y}$, $\alpha > 0$, and any initial datum $f_0 \in L^2_\omega$, the solution $f$ to the linear  Voltage-Conductance kinetic equation  \eqref{eq:VCklinear}, \eqref{eq:LinearVCkBd}
 satisfies the ultracontractivity estimate 
\beqn\label{eq:theo-Ultra}
\| f(t,\cdot) \|_{L^\infty_\omega} \lesssim {  e^{\kappa (t-s)}  \over (t-s)^\nu} \| f(s,\cdot) \|_{L^1_\omega}, 
\eeqn
for any $t > s \ge 0$, some constants   $\nu> 0$ and  $\kappa$ such that $0 \le \kappa \lesssim 1+ \| \fraka \|_{L^\infty} + \| \frakb \|_{L^\infty}$. 
\end{theo}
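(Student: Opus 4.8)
The plan is to deduce \eqref{eq:theo-Ultra} from three ingredients: $L^p_\omega$ a priori bounds for the linear flow, valid for every $p\in[1,\infty]$ with growth rate of the announced size; a short-time gain of integrability $L^1_\omega\to L^\infty_\omega$ coming from the hypoelliptic smoothing of the transport--diffusion operator $\LLL_{\fraka,K}$; and a Duhamel/semigroup assembly of these two.

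\emph{Step 1 (reduction to an unweighted problem and $L^p$ bounds).} Since $\fraka=\fraka(t)$ and $\frakb=\frakb(t)$ do not depend on $(v,y)$, I would first pass to the rescaled unknown $g:=e^{\alpha y}f$, which solves
\be
\partial_t g = -\partial_v(Jg)-\partial_y\big((K+2\alpha\fraka)g\big)+\fraka\,\partial_{yy}^2 g+(\alpha\frakb+\alpha^2\fraka-\alpha y)\,g
\ee
on $(0,\infty)\times\OO$, the Dirichlet condition on $\Gamma_1$ and the flux-recycling condition \eqref{eq:VCktBd3} being unchanged (the weight does not depend on $v$) while the Robin condition \eqref{eq:VCktBd1-linear} becomes $(K+\alpha\fraka)g-\fraka\,\partial_y g=0$ on $\Gamma_0$. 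Testing this equation against $p|g|^{p-2}g$ over $\OO$, the diffusion produces a nonpositive term of which I keep a small fraction, the term $-\alpha y g$ gives $-\alpha\int y|g|^p\le0$, the boundary terms vanish on $\Gamma_1$, cancel between $v=0$ and $v=v_F$ on $\Gamma_2$ by \eqref{eq:VCktBd3}, and on $\Gamma_0$ reduce to a bounded multiple of $\int_{\Sigma_0}|g|^p$ absorbed by a trace inequality and the retained fraction of the diffusion, while the bulk terms are bounded by $(\|\partial_vJ+\partial_yK\|_{L^\infty}+\alpha\|\frakb\|_{L^\infty}+\alpha^2\|\fraka\|_{L^\infty})\int|g|^p$. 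This yields $\|f(t)\|_{L^p_\omega}=\|g(t)\|_{L^p}\le e^{\kappa(t-s)}\|f(s)\|_{L^p_\omega}$ for all $t\ge s$ and all $p\in[1,\infty]$ (the case $p=\infty$ being a maximum principle argument), with $\kappa\lesssim 1+\|\fraka\|_{L^\infty}+\|\frakb\|_{L^\infty}$; in particular $S_{\LLL_{\fraka,K}}$ is bounded on $L^1_\omega$ with this rate.

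\emph{Step 2 (short-time gain of integrability).} This is the heart of the proof. I would use that for $v\in[0,v_F]$ one has $\varphi'(v)\,J(v,y)=y-y_Lv/(v_E-v)$ with $0\le y_Lv/(v_E-v)\le y_F$, where $\varphi(v):=\int_0^v(v_E-w)^{-1}\,dw$ is a smooth increasing change of variable. After setting $\tilde v:=\varphi(v)$, the equation for $g$ becomes, modulo first-order-in-$\partial_{\tilde v}$ and zeroth-order terms with bounded coefficients, the Kolmogorov--Fokker--Planck equation
\be
\partial_t g + y\,\partial_{\tilde v}g - \partial_y\big(y\,g + \fraka\,\partial_y g\big) = (\text{lower order terms}),
\ee
i.e.\ the classical kinetic Fokker--Planck equation, with $\tilde v$ in the role of position, $y$ in the role of velocity, and an Ornstein--Uhlenbeck confinement in $y$; its leading part has an explicit Mehler--Gaussian fundamental solution which is ultracontractive from $L^1$ to $L^\infty$ with small-time rate $\sim(t-s)^{-2}$ and gains a $\partial_y$ derivative at cost $(t-s)^{-1/2}$. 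The short-time estimate
\be
\|S_{\LLL_{\fraka,K}}(t-s)\|_{L^1_\omega\to L^\infty_\omega}\lesssim(t-s)^{-\nu},\qquad 0<t-s\le 1,
\ee
for some fixed $\nu>0$ (of order $2$) and a constant depending only on $a_*,a^*,v_E,v_F,y_L,\alpha$, then follows as for the kinetic Fokker--Planck equation in a domain \cite{carrapatoso2024kinetic,CGMM**} — for instance from interior and boundary $L^2_{\rm loc}\to L^\infty_{\rm loc}$ gains of De Giorgi--Nash--Moser type combined with the velocity-averaging mechanism that spreads the $\partial_y$-regularity of $g$ to the $\tilde v$ variable, the $L^p_\omega$ bounds of Step 1 controlling the region of large conductances (where $g$ is super-exponentially small thanks to the $-\partial_y(y\,g)$ drift and the $-\alpha y g$ damping).

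\emph{Step 3 (assembly, and the main obstacle).} For $0<t-s\le1$, \eqref{eq:theo-Ultra} is the estimate of Step 2 (with $e^{\kappa(t-s)}\ge1$ harmless). For $t-s\ge1$, I would write $t-s=(t-s-\tfrac12)+\tfrac12$ and compose the $L^1_\omega\to L^1_\omega$ bound of Step 1 on $[s,t-\tfrac12]$, contributing $e^{\kappa(t-s-1/2)}$, with the short-time $L^1_\omega\to L^\infty_\omega$ gain of Step 2 on $[t-\tfrac12,t]$, contributing a universal constant; together these give $\|f(t)\|_{L^\infty_\omega}\lesssim e^{\kappa(t-s)}(t-s)^{-\nu}\|f(s)\|_{L^1_\omega}$ with $\kappa$ as in Step 1, the factor $(t-s)^{-\nu}$ being relevant only for small $t-s$. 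I expect the main difficulty to be in Step 2: making the hypoelliptic smoothing rigorous up to the mixed boundary $\Sigma=\Sigma_0\cup\Sigma_1\cup\Sigma_2$ — Dirichlet on $\Sigma_1$, Robin on $\Sigma_0$, and the non-local flux-recycling on $\Sigma_2$ — while keeping every constant constructive and linear in $\|\fraka\|_{L^\infty}$ and $\|\frakb\|_{L^\infty}$.
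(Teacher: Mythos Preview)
Your Step~1 contains a genuine gap. When you test the equation for $g=e^{\alpha y}f$ against $p|g|^{p-2}g$, the transport term $-\partial_v(Jg)$ produces, after integration by parts, the bulk contribution $(p-1)\int_\OO(-\partial_v J)\,|g|^p=(p-1)\int_\OO(y+y_L)\,|g|^p$. Your claim that ``the bulk terms are bounded by $(\|\partial_vJ+\partial_yK\|_{L^\infty}+\dots)\int|g|^p$'' is false: $\partial_v J=-(y+y_L)$ is unbounded. The only damping in $y$ that you have is $-p\alpha\int y|g|^p$ from the zeroth-order term, so the net coefficient of $y$ in the estimate is $(p-1)-p\alpha$, which is \emph{positive} as soon as $p>1/(1-\alpha)$; for $\alpha\in(0,1)$ the $L^p_\omega$ bound therefore blows up for large $p$ and you cannot reach $L^\infty_\omega$. (Your claim that the $\Sigma_2$ boundary terms ``cancel'' is also inaccurate---for $p>1$ they do not cancel, though their sign happens to be favorable.) The paper's fix is precisely the point you are missing: it introduces a \emph{twisted} weight $\widetilde\omega$ with $\widetilde\omega^p=(\chi+(1-\chi)J_\xi^{p-1}y^{1-p})\,w^p$, so that for large $y$ one has $\widetilde\omega\simeq(J/y)^{1-1/p}\omega$ and hence $J\,\partial_v\widetilde\omega/\widetilde\omega=(1-\tfrac1p)\partial_vJ$, which exactly cancels the dangerous $(\tfrac1p-1)\partial_vJ$ term in $\varpi$. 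This twist is also what makes the $\Sigma_2$ contribution nonpositive in a clean way and keeps the $\Sigma_0$ term under control without any trace absorption.

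Your Step~2 is too schematic to be a proof. The change of variable $\tilde v=\varphi(v)$ is indeed used in the paper (Section~4, for interior H\"older regularity and Harnack), but invoking \cite{carrapatoso2024kinetic,CGMM**} as a black box does not work here: those results treat specular/Maxwell-type local boundary conditions, not the nonlocal flux recycling \eqref{eq:VCktBd3}. The paper's actual route to ultracontractivity (Section~3) is substantially different from a direct reduction: it (i)~establishes a boundary-penalizing $L^2$ estimate via a second twisted weight $\widecheck\omega^2=\widetilde\omega^2\frakP$ with $\frakP$ built from $\delta^{1/2}J/\langle J\rangle^2$ (Lemma~3.1, Proposition~3.3); (ii)~proves interior $L^2\to L^r$ gain ($r>2$) by H\'erau-type hypocoercivity on a localized auxiliary equation (Propositions~3.5--3.9); (iii)~combines these through Nash's interpolation argument to get $L^1_\omega\to L^2_\omega$ with rate $t^{-\nu_1}$ (Corollary~3.10); and (iv)~runs Moser iteration, again with the twisted weight $\widetilde\omega$, to get $L^2_\omega\to L^\infty_\omega$ (Proposition~3.15). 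Your Step~3 is correct and matches the paper's final assembly.
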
 

Here and below, we use the usual notation $a \lesssim b$ and $b \gtrsim a$ when $a \le Cb$ for some constant $C \in (0,\infty)$ and $a \simeq b$ when $a \lesssim b$ and $b \lesssim a$.

\smallskip

This result is a variant of the ultracontractivity results for the kinetic Fokker-Planck (KFP) equation in a domain established in \cite{carrapatoso2024kinetic,CGMM**}, see also \cite{CM-Landau**}. 
For general parabolic equations in the whole space, ultracontractivity estimates are a consequence of the De Giorgi-Nash-Moser theory  \cite{MR0093649,MR0100158,MR0170091,MR159139}, although the notion of ultracontractive semigroup has been introduced later by Davies and Simon \cite{MR0766493} (see also  \cite{MR0712583}). For the classical KFP equation is the whole space,  ultracontractivity property is a direct consequence of the representation of the solution thanks to the Kolmogorov kernel \cite{MR1503147}, of some regularity estimates through Fourier analysis \cite{MR0222474,MR1949176} or of some modified energy estimates \cite{MR2294477}. 
Some local uniform estimate of a similar kind for a larger class of KFP equations in the whole space has been established in \cite[Theorem 1.2]{MR2068847} by using Moser iterative scheme  and in  \cite[Theorem~2.1]{zbMATH07050183} by using  De Giorgi iterative scheme.  We refer to the above mentioned papers, in particular to \cite{carrapatoso2024kinetic}, for more references on the subject as well as to   \cite{MR4069607} for a general survey about these issues. The proof of Theorem~\ref{theo-Ultra} is similar to the one of \cite[Theorem~1.1]{carrapatoso2024kinetic} although some steps must be modified. 
For the gain of integrability $L^1 \to L^2$, the strategy is based on Nash's gain of integrability argument \cite{MR0100158} which is performed however on the time  integral inequality
as in  Moser's work  \cite{MR159139} what is more convenient in order to use the interior gain of  integrability deduced from an adaptation of H{\'e}rau regularity result  \cite{MR2294477}. 
As in  \cite{carrapatoso2024kinetic}, the key argument consists in exhibiting a suitable  twisted weight function which makes possible to obtain a priori growth estimate in weighted $L^p$ space and a nice control (through a penalization term)  of the boundary region. Last, the gain of  integrability $L^2 \to L^\infty$ is obtained by adapting Moser iterative scheme as in  \cite{MR0170091,MR159139,MR2068847}.

 \medskip 
 The organization of the paper is as follows. 
Section~\ref{sec:GrowthEstim} is dedicated to the proof of some weighted $L^p$ a priori growth bounds and the well-posedness of the linear VCk equation in a weighted $L^2$ framework. 
 Section~\ref{sec:ultra} is devoted to the proof of the ultracontractivity property as stated in Theorem~\ref{theo-Ultra}. 
 In section~\ref{sec:Holder&compact}, we establish some useful interior Holder regularity and up to the boundary compactness results. 
 Section~\ref{sec:ExistenceNL} is dedicated to the proof of the existence result for the nonlinear problem. 
 In Section~\ref{sec:DHtheorem}, we state and prove a constructive version of the Doblin-Harris theorem in a general lattice Banach framework, which is used in Section~\ref{sec:asympt} in order to prove Theorem~\ref{theo-StabLinear}.

\section{Growth estimates and well-posedness in $L^2$} 
\label{sec:GrowthEstim}

 \subsection{Growth estimates on the primal problem}
\label{subsec:GrowthEstimPrimal}

 We establish some a priori  growth estimates in weighted Lebesgue spaces for a solution $f$ to the linear  VCk equation.

\begin{lem}\label{lem:GrowthLp}
For any admissible weight function $\omega = \omega(y)$,
there exist a modified weight function $\widetilde\omega = \widetilde\omega(v,y) \simeq \omega$  
and a constant $ \kappa  \ge 0$ such that any solution  $f$ to the linear  VCk equation \eqref{eq:VCklinear}, \eqref{eq:LinearVCkBd} 
 satisfies (at least formally)  
\beqn\label{eq:lem:GrowthLp}
\| f_t \|_{L^p_{\widetilde\omega}} \le e^{\kappa t } \| f_0 \|_{L^p_{\widetilde\omega}}, \quad \forall \, t \ge 0,
\eeqn
and $\kappa \le C( 1+ \| \fraka \|_{L^\infty} + \| \frakb \|_{L^\infty})$ for some constant $C=C_\omega \in (0,\infty)$.
\end{lem}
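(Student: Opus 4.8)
The plan is to run a weighted $L^p$ energy estimate. Throughout, $p\in[1,\infty)$ is fixed (the case $p=\infty$ is addressed at the end). We look for the modified weight in the product form $\widetilde\omega(v,y)=m(v)\,\omega_\sharp(y)$, where $\omega_\sharp\simeq\omega$ is chosen with $\omega_\sharp'(0)=0$ — for $\omega=\langle y\rangle^k$ one may take $\omega_\sharp=\omega$, while for $\omega=e^{\alpha y}$ one takes $\omega_\sharp(y):=e^{\alpha\langle y\rangle}$, which satisfies $1\le\omega_\sharp/\omega\le e^\alpha$ — and where $m$ is a smooth positive function on $[0,v_F]$, bounded above and below by constants independent of $p$, slightly decreasing, to be pinned down below; in particular $\widetilde\omega\simeq\omega$. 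Multiplying \eqref{eq:VCklinear} by $p|f|^{p-2}f\,\widetilde\omega^p$ and integrating over $\OO$ gives, at least formally,
\begin{equation*}
\frac{d}{dt}\|f_t\|_{L^p_{\widetilde\omega}}^p=\int_\OO\big(\LLL_{\fraka,K}f\big)\,p|f|^{p-2}f\,\widetilde\omega^p\,dvdy,
\end{equation*}
and the goal is to bound the right-hand side by $p\kappa\,\|f_t\|_{L^p_{\widetilde\omega}}^p$ with $\kappa\le C_\omega(1+\|\fraka\|_{L^\infty}+\|\frakb\|_{L^\infty})$; Grönwall's lemma then yields \eqref{eq:lem:GrowthLp}.

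Integrating by parts separately in $v$ and in $y$ produces interior terms together with boundary contributions on $\Sigma_1$, $\Sigma_2$ and $\Sigma_0$, the contributions at $y=+\infty$ vanishing by the decay of $f$. On $\Sigma_1$ the boundary term vanishes because $f=0$ there by \eqref{eq:VCktBd2}. The $\Sigma_0$ (i.e.\ $y=0$) contributions arising from the $y$-transport and from the Laplacian are treated jointly: using the Robin condition \eqref{eq:VCktBd1-linear}, $\fraka\,\partial_y f=Kf=\frakb f$ on $\Sigma_0$, they combine into $\int_{\Sigma_0}\big[(1-p)\frakb+p\fraka\,(\omega_\sharp'/\omega_\sharp)(0)\big]\,|f|^p\widetilde\omega^p\,dv$, which is $\le0$ since $p\ge1$ and $\omega_\sharp'(0)=0$. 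On $\Sigma_2$, where $J(0,y),J(v_F,y)>0$ for $y>y_F$, the reset condition \eqref{eq:VCktBd3}, $(Jf)(0,y)=(Jf)(v_F,y)$, allows one to rewrite the boundary term as
\begin{equation*}
p\int_{y_F}^{\infty}J(v_F,y)\,|f(v_F,y)|^p\,\widetilde\omega(v_F,y)^p\Big[\big(\tfrac{J(v_F,y)}{J(0,y)}\big)^{p-1}\big(\tfrac{m(0)}{m(v_F)}\big)^{p}-1\Big]dy,
\end{equation*}
which is $\le 0$ as soon as $m(0)/m(v_F)\le\big(J(0,y)/J(v_F,y)\big)^{(p-1)/p}$ for all $y>y_F$; since $J(0,y)/J(v_F,y)>v_E/(v_E-v_F)>1$, this holds for the choice $m(v):=(v_E-v)^{(p-1)/p}$, which is $\simeq1$ on $[0,v_F]$ uniformly in $p$.

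With the boundary terms under control there remains $\int_\OO|f|^p\widetilde\omega^p\,\varpi\,dvdy$ minus the nonnegative dissipation $\fraka\,p(p-1)\int_\OO|f|^{p-2}|\partial_y f|^2\widetilde\omega^p$ (which we discard), where
\begin{equation*}
\varpi=(1-p)\,\partial_v J+p\,J\,\partial_v\!\log m+(p-1)+p\,K\,\partial_y\!\log\omega_\sharp+\fraka\big[p\,\partial_{yy}^2\!\log\omega_\sharp+p^2(\partial_y\!\log\omega_\sharp)^2\big].
\end{equation*}
The only a priori unbounded piece is $(1-p)\partial_v J=(p-1)(y+y_L)$, which grows linearly in $y$; it is compensated by $pJ\partial_v\log m=-(p-1)J/(v_E-v)$ produced by our choice of $m$, the sum being $(p-1)\,y_L\,v_E/(v_E-v)\le(p-1)\,y_L\,v_E/(v_E-v_F)$. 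In $p\,K\,\partial_y\log\omega_\sharp=p(\frakb-y)\partial_y\log\omega_\sharp$ the $-y$ part has the favorable sign (as $\partial_y\log\omega_\sharp\ge0$), whence this term is $\le p\|\frakb\|_{L^\infty}\sup|\partial_y\log\omega_\sharp|$; finally $\partial_y\log\omega_\sharp$ and $\partial_{yy}^2\log\omega_\sharp$ are bounded, so the diffusion bracket is $\le\fraka\,C_\omega$ (the constant $C_\omega$ here also absorbing the dependence on the fixed $p$). Altogether $\varpi\le p\kappa$ pointwise with $\kappa\le C_\omega(1+\|\fraka\|_{L^\infty}+\|\frakb\|_{L^\infty})$, hence $\frac{d}{dt}\|f_t\|_{L^p_{\widetilde\omega}}^p\le p\kappa\|f_t\|_{L^p_{\widetilde\omega}}^p$ and Grönwall concludes. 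The case $p=\infty$ is handled analogously with $m(v)=v_E-v$ (the $p\to\infty$ limit of the above), applying the maximum principle to the equation satisfied by $h:=f\widetilde\omega$, whose zeroth-order coefficient enjoys exactly the same cancellations and is $\le C_\omega(1+\|\fraka\|_{L^\infty}+\|\frakb\|_{L^\infty})$.

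The step I expect to be the main obstacle is the simultaneous sign control of the boundary terms: the reset condition \eqref{eq:VCktBd3} is nonlocal in $v$ and couples the weight values at $v=0$ and $v=v_F$, which forces a $v$-dependent (and $p$-dependent) factor $m$ in $\widetilde\omega$; the point that makes the argument close is that precisely this modification is what tames the otherwise unbounded interior contribution of $\partial_v J$, while replacing $e^{\alpha y}$ by the equivalent weight $e^{\alpha\langle y\rangle}$ is what neutralizes the boundary term on $\Sigma_0$. Finally, the computation above is only formal, as stated; it has to be legitimized through the approximation/regularization scheme of the well-posedness part of this section (mollified coefficients, a small added ellipticity in $v$, truncation near $\Sigma$) together with a passage to the limit in the a priori bound \eqref{eq:lem:GrowthLp}.
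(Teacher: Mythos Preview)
Your approach is essentially correct for each fixed $p$ and takes a genuinely different route from the paper's. Both proofs hinge on a $v$-dependent twist of the weight to control simultaneously the reset boundary on $\Sigma_2$ and the linear-in-$y$ growth of $\partial_v J$; but where you take the simple product $\widetilde\omega = (v_E-v)^{(p-1)/p}\omega_\sharp(y)$ with $\omega_\sharp'(0)=0$, the paper builds $\widetilde\omega^p = (\chi + (1-\chi)J_\xi^{p-1}y^{1-p})\,w^p$ with cutoffs in $y$ and an interpolated flux $J_\xi$. For large $y$ the paper's factor reduces to $(J/y)^{(p-1)/p}\to(v_E-v)^{(p-1)/p}$, so your choice is essentially the asymptotic form of theirs; your treatment of $\Sigma_0$ via $\omega_\sharp'(0)=0$ is arguably cleaner than the paper's cutoff near $y=0$, and your observation that the \emph{same} factor $m(v)$ cancels the growing interior term $(1-p)\partial_v J$ is the right structural point.

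The one substantive difference is in how the $y$-dissipation is packaged. You discard only $-p(p-1)\int\fraka|\partial_y f|^2 f^{p-2}\widetilde\omega^p$, which leaves the term $\fraka\,p^2(\partial_y\log\omega_\sharp)^2$ in your $\varpi$; after dividing by $p$ this is $\fraka\,p(\partial_y\log\omega_\sharp)^2$, so your $\kappa$ grows linearly in $p$ (as you implicitly acknowledge in the parenthetical). The paper instead groups the dissipation as $-\tfrac{4(p-1)}{p^2}\int\fraka(\partial_y(f\widetilde\omega)^{p/2})^2$, a completion of the square in $\partial_y(f\widetilde\omega)$ that absorbs the cross terms and leaves only the coefficient $2(1-\tfrac1p)\le 2$ in front of $\fraka(\partial_y\widetilde\omega/\widetilde\omega)^2$ --- bounded uniformly in $p$. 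For the lemma as stated (and for the single-$p$ applications in this section) your bound is adequate, with $p=\infty$ handled separately via the maximum principle as you indicate; but the $p$-uniform $\kappa$ is exactly what the Moser iteration in Section~\ref{sec:ultra} relies on (see Lemma~\ref{lem:Moser-estimL2+} and Proposition~\ref{prop:Moser}), so it is worth noting that replacing your crude discard by the $\partial_y(f\widetilde\omega)$ grouping would upgrade your argument to match the paper's uniformity at no extra cost.
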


\begin{proof}[Proof of Lemma~\ref{lem:GrowthLp}]
For simplicity and because the linear VCk equation is (at least formally) positivity preserving, we may focus on the case   $f = f(t,v,y) \ge 0$.  
For $p \in [1,\infty)$, we  write
\bean
 \frac1p  {d \over dt} \int_\OO f^p \widetilde\omega^p
&=& \int_\OO  \partial_v(-Jf) f^{p-1} \widetilde\omega^p + \int_\OO \partial_y(\fraka\partial_y f - Kf)  f^{p-1}\widetilde\omega^p =: A + B,
\eean
and we then consider each term separately. For the first term, we have 
\bean
A 
&=& - \int_\OO f^p\widetilde\omega^p  \partial_vJ - \frac1p \int_\OO J\widetilde\omega^p \partial_v(f^p)
\\
&=&\int_\OO f^p \bigl( \frac1p \partial_v(\widetilde\omega^pJ) - \widetilde\omega^p  \partial_vJ \bigr)  
-   \frac1p \int_{\Sigma} J f^p \widetilde\omega^p n_v
\\
&=&\int_\OO f^p \widetilde\omega^p  \bigl( J  {\partial_v \widetilde\omega \over \widetilde\omega} 
+ (\frac1p-1) \partial_v J  \bigr)  
-   \frac1p \int_{\Sigma_2} J f^p \widetilde\omega^p n_v,
\eean
 where we have used the Green formula in the second line and the boundary condition on $\Sigma_1$ in the last line. 
On the other hand, we compute
\bean
B
&=& \int_\OO (Kf - \fraka\partial_y f ) \partial_y( (f  \widetilde\omega)^{p-1}) \widetilde\omega)
\\
&=& \int_\OO Kf \widetilde\omega  \partial_y( (f  \widetilde\omega)^{p-1}) ) + \int_\OO Kf  (\partial_y\widetilde\omega)  (f  \widetilde\omega)^{p-1} 
\\
& & - \int_\OO  \fraka\partial_y f   \widetilde\omega  \partial_y( (f  \widetilde\omega)^{p-1}) )  - \int_\OO  \fraka\partial_y f   (\partial_y\widetilde\omega)  (f  \widetilde\omega)^{p-1} 
\\
&=& \int_\OO Kf \widetilde\omega  \partial_y( (f  \widetilde\omega)^{p-1}) ) + \int_\OO Kf  (\partial_y\widetilde\omega)  (f  \widetilde\omega)^{p-1} 
\\
& & - \int_\OO  \fraka\partial_y (f   \widetilde\omega)  \partial_y( (f  \widetilde\omega)^{p-1}) ) + \int_\OO  \fraka f (\partial_y   \widetilde\omega)  \partial_y( (f  \widetilde\omega)^{p-1}) ) 
\\
&& - \int_\OO  \fraka\partial_y (f\widetilde\omega) {  \partial_y\widetilde\omega \over \widetilde\omega}  (f  \widetilde\omega)^{p-1}  
+ \int_\OO    \fraka     (f  \widetilde\omega)^{p} \Bigl( {  \partial_y\widetilde\omega \over \widetilde\omega} \Bigr)^2
\\
&=& 
\int_\OO [(1-\frac1p) K + (1-\frac2p) \fraka    {\partial_y\widetilde\omega \over \widetilde\omega} ]     \partial_y(   (f \widetilde\omega)^p  ) + \int_\OO K {\partial_y\widetilde\omega \over \widetilde\omega}   (f  \widetilde\omega)^{p} 
\\
& & - 4 {p-1 \over p^2} \int_\OO  \fraka (\partial_y (f   \widetilde\omega)^{p/2})^2   + \int_\OO  \fraka (f  \widetilde\omega)^{p} \Bigl( {  \partial_y\widetilde\omega \over \widetilde\omega} \Bigr)^2,
\eean
where we have used the Green formula and  boundary condition on $\Sigma_0$ in the first line and we have then just rearrange the terms  in order to forcing the dependence on the function $f   \widetilde\omega$. 
Using once more the  Green formula in the first integral of the term $B$ and gathering the two contributions, we obtain 
\bean
 \frac1p  {d \over dt} \int_\OO f^p \widetilde\omega^p
&=& - {4 (p-1) \over p^2}  \int_\OO \fraka (\partial_y (f\widetilde\omega)^{p/2})^2 +  \int_\OO  f^p \widetilde\omega^p \varpi
\\
&& + \int_{\Sigma_0} [(1-\frac1p) K + (1-\frac2p) \fraka      {\partial_y\widetilde\omega \over \widetilde\omega} ] n_0     (f \widetilde\omega)^p   
-   \frac1p \int_{\Sigma_2} J f^p \widetilde\omega^p n_v,
\eean
with 
\beqn\label{def:varpi}
\varpi := 2(1-\frac1p) \fraka \bigl( {\partial_y \widetilde\omega \over \widetilde\omega} \bigr)^2
+ (\frac2p - 1) \fraka {\partial^2_{yy} \widetilde \omega \over \widetilde\omega}
+ K  {\partial_y \widetilde\omega \over \widetilde\omega} + J  {\partial_v \widetilde\omega \over \widetilde\omega} 
+ (\frac1p-1) \partial_y K 
+ (\frac1p-1) \partial_v J. 
\eeqn
In order to make negative the boundary contribution, we define first 
\beqn\label{eq:def-w}
w := \chi + (1-\chi) \omega, 
\eeqn
with $\chi = \chi(y) \in C^2(\R)$, ${\bf 1}_{[0,y_F/2]} \le \chi \le {\bf 1}_{[0,y_F]}$,  and next 
\beqn\label{eq:def-widetildeomega}
  \widetilde \omega^p := (\chi + (1-\chi) J_\xi ^{p-1}y^{1-p}) w^p, 
  \quad  
J_\xi :=   \xi J(0,y)  +  (1- \xi)J(v,y), 
\eeqn
with $\xi = \xi(y) \in C^2(\R)$, ${\bf 1}_{[0,y_F]} \le \chi \le {\bf 1}_{[0,2y_F]}$. 
 
\smallskip
With this choice, we have $\widetilde \omega =  1$  
on $(0,v_F) \times (0,y_F/2)$, thus   
$$
(1-\frac1p) K + (1-\frac2p) \fraka      {\partial_y\widetilde\omega \over \widetilde\omega} = (1-\frac1p) K \ge (1-\frac1p) y_* \ge 0 \quad \hbox{ on } \quad \Sigma_0, 
$$
and the  contribution of the boundary term on $\Sigma_0$ is non positive.  
On the other hand, the contribution of the boundary term on $\Gamma_2$ is 
\bean
 - \frac1p \int_{\Sigma_2} J f^p \widetilde\omega^p n_v
&=& \frac1p \int_{y_F}^{\infty} (J f^p  \widetilde\omega^p) (0,y) -  (J f^p  \widetilde\omega^p) (v_F,y)
 \\
&=& \frac1p \int_{y_F}^{\infty} ((J^p f^p  ) (0,y) -   (J J_\xi^{p-1} f^p ) (v_F,y)) w^p  y^{1-p} 
 \\
&\le& \frac1p \int_{y_F}^{\infty} ((Jf )^p (0,y) -  (Jf)^p (v_F,y))   w^2 y^{1-p} = 0,
 \eean
 where we use that $ J_\xi  \ge J$ in the second line and the boundary condition in the last line.  
 Now, from the very definition of $\widetilde\omega$,  we have 
 $$
  \varpi \le \kappa_1 := C (1 + \| \fraka \|_{L^\infty} + \| \frakb \|_{L^\infty}) \quad\hbox{on}\quad (0,T) \times (0,v_F) \times (0,2y_F]
 $$
 uniformly on $p \in [1,\infty)$ and for some constant $C = C(\widetilde\omega) \ge 0$.  
 On the other hand, 
we have $\widetilde \omega = Q^{1-1/p} \omega$ on  $(0,v_F) \times (2y_F,\infty)$, $Q := J/y = v_E - v  - y_Lv/y$, so that  
\bean
{\partial_v \widetilde \omega \over  \widetilde \omega}
&=& (1-\frac1p) {\partial_vJ  \over  J }  
 \\
{\partial_y \widetilde \omega \over  \widetilde \omega}
&=&
  (1-\frac1p) {\partial_y Q  \over  Q }
+  {\partial_y\omega \over \omega} =  {\partial_y\omega \over \omega} + \OO(\langle y \rangle^{-1}) 
 \\
{\partial^2_{yy} \widetilde \omega \over  \widetilde \omega}
&=&
- \frac1p(1-\frac1p) \bigl( {\partial_y Q \over Q} \bigr)^2 + (1-\frac1p) {\partial_{yy}^2 Q \over Q}  + 2 (1-\frac1p) {\partial_{y} Q \over Q}  {\partial_{y} w \over w}   + 
 {\partial_{yy}^2 w \over w} . 
 \eean  
 Observing that 
 $$
 Q =  v_E-v - y_L v/y, \quad  
 {\partial_y Q  \over  Q } = \OO(y^{-2}), \quad 
 {\partial^2_{yy}  Q  \over  Q } = \OO(y^{-3}), \quad  
 {\partial_v Q  \over  Q } = - {1 \over v_E- v} + \OO(y^{-1}),  
  $$
because $\omega$ is an admissible weight function, 
 we deduce 
 \bean
{\partial_y \widetilde \omega \over  \widetilde \omega}
\sim
  {\partial_y\omega \over \omega}, 
\quad
{\partial^2_{yy} \widetilde \omega \over  \widetilde \omega}
\sim   {\partial_{yy}^2 w \over w}
 \eean
and next 
\bean
\varpi 
&=& 2(1-\frac1p) \fraka  \bigl( {\partial_y \widetilde\omega \over \widetilde\omega} \bigr)^2
+ (\frac2p - 1)  \fraka  {\partial^2_{yy} \widetilde \omega \over \widetilde\omega}
+ K  {\partial_y \widetilde\omega \over \widetilde\omega}  
+ 1 - \frac1p 
\\
&\sim& 2(1-\frac1p) \fraka \bigl( {\partial_y  \omega \over \omega} \bigr)^2
+ (\frac2p - 1) \fraka {\partial^2_{yy}   \omega \over  \omega}
- y  {\partial_y  \omega \over  \omega}  
+ 1 - \frac1p 
\eean
as $y\to\infty$. 
When $\omega = y^k$,   we have 
\beqn\label{eq:asym-varpi-poly}
\varpi 
\sim  - k  + 1 - \frac1p.
\eeqn
When $\omega = e^{\alpha y}$, $\alpha > 0$, we have 
\bean
\varpi 
 &\sim&  - \alpha y. 
\eean
  More precisely, in both cases, we have 
\beqn\label{eq:asym-varpi-expo}
\varpi  \le \kappa_2 - \varsigma\quad\hbox{on}\quad (0,T) \times (0,v_F) \times (2y_F,\infty)
\eeqn
 uniformly on $p \in [1,\infty)$, with 
 \bean
 && \kappa_2 := C (1 + \| \fraka \|_{L^\infty} + \| \frakb \|_{L^\infty}) 
 \\
 && \varsigma := k-1+\frac1p \hbox{ if } \omega = \langle y \rangle^k, 
\quad \varsigma := \alpha y \hbox{ if } \omega = e^{\alpha y }. 
\eean

 \smallskip
 All together, we have established 
\beqn\label{eq:LpestimGrowth-1}
 \frac1p  {d \over dt} \int_\OO f^p \widetilde\omega^p
\le - {4 (p-1) \over p^2}  \int_\OO \fraka (\partial_y (f\widetilde\omega)^{p/2})^2 +  \int_\OO  f^p \widetilde\omega^p \varpi, 
\eeqn
and from \eqref{eq:asym-varpi-poly} and  \eqref{eq:asym-varpi-expo}, we deduce 
\beqn\label{eq:LpestimGrowth-1}
 \frac1p  {d \over dt} \int_\OO f^p \widetilde\omega^p
\le - {4 (p-1) \over p^2}  \int_\OO \fraka (\partial_y (f\widetilde\omega)^{p/2})^2 + \kappa \int_\OO  f^p \widetilde\omega^p , 
\eeqn
with $\kappa := \max(\kappa_1,\kappa_2)$. We conclude thanks to the Gronwall lemma in the case $p \in [1,\infty)$ and passing to the limit $p \to \infty$ in the resulting estimate for dealing with the case $p=\infty$.  
 \end{proof}

\medskip

 \subsection{About the well posedness}
 
 In this section, we build a solution to the Cauchy problem associated to the linear VCk equation in a $L^2$ framework. 
We denote $d\xi_1 := \widetilde\omega^2  |J| dtdy$  and $d\xi_2 := \omega^2  J^2 /\langle y \rangle^2 dtdy$  the Borel measures on the boundary $\Gamma_{12}$ and also 
$d\xi_1 := \widetilde\omega^2  K dtdv$  the Borel measures on the boundary $\Gamma_{0}$.

\begin{theo}\label{theo:existL2}
For any admissible weight function $\omega$ and any initial datum $f_0 \in L^2_\omega \subset L^1$, there exists a unique solution $f \in C([0,T];L^2_\omega) \cap L^2((0,T) \times (0,v_F); H^1(0,\infty))$, $\forall \, T > 0$,  to the linear  VCk equation \eqref{eq:VCklinear}-\eqref{eq:LinearVCkBd} 
 and this one satisfies the growth estimate \eqref{eq:lem:GrowthLp} (for $p=2$). 
More precisely, there exists a trace function $\gamma f \in L^2(\Gamma_{12}; d\xi_2)$ such that $f$ is a renormalized solution, in the sense that 
\bear
\label{eq:theo:existL2-1}
&&  \int_\OO \beta(f_T) \psi + \int_{\Gamma} J  \beta(\gamma f)  \psi  + \int_\UU \beta(f) [ \partial_t \psi +  \partial_v (J \psi)]  - \int_\UU  (\partial_vJ)f \beta'(f) \psi 
\\ \nonumber
&&\quad = -  \int_\UU  (K f - \fraka \partial_{y} f ) \partial_y  (\beta'(f)\psi)   +  \int_\OO \beta(f_0) \psi ,
\eear
for any $\beta \in W^{1,\infty}(\R) $ and any $\psi \in C^1_c(\bar\UU)$,  and where $\gamma f$ satisfies the  boundary conditions \eqref{eq:VCktBd2}, \eqref{eq:VCktBd3} in the a.e. sense. 
The additional  boundary condition \eqref{eq:VCktBd1-linear} is encapsulated in the fact that $\psi$ does not necessarily  vanish on the boundary set $\Gamma_0$. 
Furthermore, if $f_0 \ge 0$, the solution $f$ satisfies 
$$
f(t,\cdot) \ge 0  \quad\hbox{and}\quad \| f(t,\cdot) \|_{L^1} = \| f_0 \|_{L^1}, \quad \forall \, t \ge 0. 
$$
\end{theo}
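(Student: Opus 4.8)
The plan is to construct $f$ as a limit of solutions to uniformly parabolic approximations, to promote that limit to a renormalized solution carrying a trace on $\Gamma_{12}$, and finally to read off uniqueness, positivity and mass conservation directly from the renormalized identity \eqref{eq:theo:existL2-1}. As a first step I would regularize: for $\eps\in(0,1)$ and $R>2y_F$, I solve on $\OO_R:=(0,v_F)\times(0,R)$ the equation $\partial_t f^\eps_R + \partial_v(Jf^\eps_R) + \partial_y(K_\eps f^\eps_R) - \fraka_\eps\partial^2_{yy}f^\eps_R - \eps\partial^2_{vv}f^\eps_R = 0$ with coefficients $\fraka_\eps,\frakb_\eps$ mollified in time (keeping the same bounds, $K_\eps:=\frakb_\eps-y$), the Dirichlet condition $f^\eps_R=0$ on $\Sigma_1$, the (now second-order compatible) $J$-weighted periodic gluing of $f^\eps_R$ and of $\partial_v f^\eps_R$ across $\{v=0\}$ and $\{v=v_F\}$ for $y\in(y_F,R)$, the Robin condition \eqref{eq:VCktBd1-linear} on $\Sigma_0$, a homogeneous Dirichlet condition at $\{y=R\}$ (whose contribution to the weighted energy identity vanishes), and regularized data $f^\eps_{R,0}\to f_0$ in $L^2_\omega$. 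Existence and uniqueness of $f^\eps_R\in C([0,T];L^2(\OO_R))\cap L^2(0,T;H^1(\OO_R))$ then follow from the variational (J.-L.~Lions) theory, the only point requiring care being to encode the $J$-weighted periodic constraint on $\Sigma_2$ as a closed linear subspace of the energy space, so that the boundary integral $\int_{\Sigma_2}J(\gamma f)^2 n_v$ is meaningful; the G{\aa}rding coercivity of the associated bilinear form is precisely the content of the computation in the proof of Lemma~\ref{lem:GrowthLp} for $p=2$.

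Next I would collect the a priori bounds uniform in $\eps$ and $R$. Running the argument of Lemma~\ref{lem:GrowthLp} on $f^\eps_R$ with the modified weight $\widetilde\omega$ yields the growth estimate \eqref{eq:lem:GrowthLp} for $p=2$ together with the dissipation bound $\int_0^T\!\!\int_\OO \fraka(\partial_y(f^\eps_R\widetilde\omega))^2\lesssim\|f_0\|_{L^2_{\widetilde\omega}}^2$; the $\eps\partial^2_{vv}$ term only adds a favourable sign and the artificial boundary at $\{y=R\}$ is absorbed using the decay of $\varpi$ at infinity recorded in \eqref{eq:asym-varpi-expo}. Consequently $f^\eps_R$ is bounded in $L^\infty(0,T;L^2_\omega)\cap L^2((0,T)\times(0,v_F);H^1(0,\infty))$. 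The extra ingredient is the boundary estimate $\gamma f^\eps_R\in L^2(\Gamma_{12};d\xi_2)$, uniformly: testing against $f^\eps_R$, the $\Sigma_2$ boundary term only \emph{cancels} thanks to the periodic condition (it is marginal, not coercive), so to extract a genuine trace bound I would break the $v$-symmetry of the weight, replacing $\widetilde\omega$ by $\widetilde\omega\,(1+\delta\phi(v))$ with $\phi$ monotone and $\delta$ small; the cancellation then becomes a dissipative term $\delta\int_{\Sigma_2}|J|(\gamma f)^2(\cdots)$ at the price of a lower-order interior term absorbed in the Gronwall step. This symmetry-breaking device (in the spirit of \cite{carrapatoso2024kinetic,CGMM**}) is what I expect to be the main obstacle — both its implementation and keeping the $d\xi_2$-weight sharp — along with the subsequent verification that the limiting trace really does inherit all three types of boundary condition.

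Finally I would pass to the limit. Up to a subsequence, $f^\eps_R\rightharpoonup f$ weakly in $L^2(0,T;H^1_y)$ and weakly-$*$ in $L^\infty(0,T;L^2_\omega)$, and $\gamma f^\eps_R\rightharpoonup\gamma f$ in $L^2(\Gamma_{12};d\xi_2)$. Since the $\eps$-parabolic regularity is not uniform, a.e.\ convergence of $f^\eps_R$ is obtained from a velocity-averaging/compensated-compactness argument exploiting $\partial_y f^\eps_R\in L^2$ together with the transport structure $\partial_t f^\eps_R+\partial_v(Jf^\eps_R)\in L^2_tH^{-1}_{v,y}$. Linearity lets one pass to the limit in the weak ($\beta=\mathrm{id}$) formulation, and a renormalization argument (convolution plus commutator estimates, legitimate since $J$ is locally Lipschitz with $\partial_v J$ bounded and $\fraka,\frakb\in L^\infty$) upgrades this to \eqref{eq:theo:existL2-1} for every $\beta\in W^{1,\infty}(\R)$, with \eqref{eq:VCktBd2}, \eqref{eq:VCktBd3} holding a.e.\ on the trace. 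Uniqueness then follows by applying \eqref{eq:theo:existL2-1} to the difference of two solutions with $\beta$ a smooth convex truncation of $s\mapsto s^2$ and $\psi=\widetilde\omega^2$, reproducing $\tfrac{d}{dt}\|f(t)\|_{L^2_{\widetilde\omega}}^2\le 2\kappa\|f(t)\|_{L^2_{\widetilde\omega}}^2$; positivity follows identically with $\beta(s)=s_-$ (smoothed), all boundary and zeroth-order terms having the right sign by the choice of $\widetilde\omega$; and mass conservation follows by taking $\beta=\mathrm{id}$ with $\psi$ an increasing sequence of cutoffs tending to $1$, the $\Sigma_1$-Dirichlet, $\Sigma_2$-periodic and $\Sigma_0$-Robin conditions killing every boundary flux in the limit. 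Combining the last two yields $\|f(t)\|_{L^1}=\langle f(t)\rangle=\langle f_0\rangle=\|f_0\|_{L^1}$ when $f_0\ge 0$.
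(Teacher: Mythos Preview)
Your strategy is reasonable but genuinely different from the paper's, and the divergence is precisely at the point you flagged as the main obstacle.

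The paper never introduces $v$-viscosity. It works directly on the degenerate problem via Lions' Lax--Milgram variant, but \emph{decouples} the awkward $\Sigma_2$ condition: Step~1 solves an inflow problem with prescribed data $\mathfrak{g}$ on $\Gamma_2^-$ (a standard setting for Lions' theorem, since the test functions simply vanish on $\Gamma_2^+$); Steps~2--6 develop the renormalized formulation, trace theory, and energy identity for this inflow problem, obtaining the $L^2(\Gamma_{12};d\xi_2)$ trace bound not through a symmetry-breaking weight but by testing against $\psi=\chi_1(t)(v_F/2-v)_+$ and $\psi=\chi_1(t)J(v-v_F/2)_+$; and Step~7 recovers the actual boundary condition \eqref{eq:VCktBd3} by iterating $g_k\mapsto g_{k+1}$ with inflow $J(0,\cdot)\gamma_-g_{k+1}=J(v_F,\cdot)\gamma_+g_k$ and passing to the limit in the Ces\`aro means $f_k=\tfrac1k(g_1+\cdots+g_k)$, for which the telescoping structure gives a clean uniform bound.

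Your route --- encode the $J$-weighted matching \eqref{eq:VCktBd3} directly in the variational space and add $\eps\partial_{vv}^2$ --- forces you to specify a second-order boundary condition on $\Sigma_2$ compatible with \eqref{eq:VCktBd3}. You write ``periodic gluing of $f^\eps_R$ and of $\partial_v f^\eps_R$,'' but the condition $(Jf)(0,y)=(Jf)(v_F,y)$ is not periodicity of $f$ (since $J(0,y)\ne J(v_F,y)$), and it is unclear what the companion condition on $\partial_v f$ should be so that (i) the $\eps$-boundary flux $\eps\int_{\Sigma_2}(\partial_v f)f\,n_v$ has a sign or cancels, (ii) the resulting constraint is a closed subspace of $H^1$, and (iii) the transition at the corner $y=y_F$ with the Dirichlet condition on $\Sigma_1$ is consistent. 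This is a real gap, not just a technicality: the paper's iteration is designed precisely to avoid it. If you insist on $v$-viscosity, a cleaner variant is to regularize only the inflow problem (Dirichlet on $\Sigma_1\cup\Sigma_2^+$, inflow on $\Sigma_2^-$), pass $\eps\to0$ there, and then run the paper's Ces\`aro iteration at the degenerate level.
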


\begin{proof}[Proof of Theorem~\ref{theo:existL2}] The proof is a bit tedious but classical. It is  split into 7 steps. 
 
\smallskip\noindent
\textit{Step 1.} We define $\Gamma_{2}^- := \{ (t,v,y) \in \Gamma_2,  J n_v < 0 \} = (0,T) \times \{ 0\} \times (y_F,\infty)$.
Given a function $\mathfrak g \in L^2(\Gamma^-_{2}; J \omega^2 dtdy)$, we solve the inflow problem \eqref{eq:VCklinear},  \eqref{eq:VCktBd1-linear}, 
\eqref{eq:VCktBd2} and 
\beqn  \label{eq:LionsBd2bis}
  f    =  {\frak g}  \ \hbox{ on  } \  \Gamma^-_2
\eeqn
 thanks to Lions' variant of the Lax-Milgram theorem \cite[Chap~III,  \textsection 1]{MR0153974}. 
More precisely, we define the Hilbert space $\HHH$ associated to 
the Hilbert norm $\| \cdot \|_\HHH$ defined by 
$$
  \| f \|^2_\HHH := \| f \|^2_{L^2_{\omega_1} (\UU)} +  \|  \partial_y f  \|^2_{L^2_\omega(\UU)}, 
$$
with $\omega_1 = \omega$ in the polynomial case and $\omega_1 = \omega \langle y \rangle^{1/2}$ in the exponential case, 
and we define  the bilinear form
$\EEE_\lambda : \HHH \times C_c^1([0,T) \times (\OO \cup \Sigma_-)) \to \R$,  by  
\bean
\EEE_\lambda(f,\varphi) 
:= \int_\UU   f (\lambda   - \partial_t  - J \partial_v - K \partial_y ) (\varphi \widetilde\omega^2) + \int_\UU \fraka \partial_y f  \partial_y  (\varphi \widetilde\omega^2), 
\eean
where $\lambda > 0$,  $\Sigma_- : =  \Sigma_0 \cup \Sigma_1 \cup \Sigma_{2-}$ 
and $\widetilde \omega$ is defined during the proof of Lemma~\ref{lem:GrowthLp}. 
Proceeding as during the proof of Lemma~\ref{lem:GrowthLp}, we have 
\bean
 \int_\UU \varphi (- J) \partial_v   (\widetilde\omega^2 \varphi)
 &=& \int_\UU   (\widetilde\omega \varphi)^2 \bigl( \frac12 \partial_v J - J {\partial_v \widetilde \omega \over \widetilde \omega}) -  \frac12 \partial_v  (J  (\widetilde\omega \varphi)^2) 
\\
 &=& - \int_{\Gamma_{12}} \frac12 n_v  J  (\widetilde\omega \varphi)^2
 + \int_\UU 
    (\widetilde\omega \varphi)^2 \bigl( \frac12 \partial_v J - J {\partial_v \widetilde \omega \over \widetilde \omega}) 
 \eean
 and similarly 
 \bean
 \int_\UU \varphi (- K) \partial_y   (\widetilde\omega^2 \varphi)
 &=& - \int_{\Gamma_{0}} \frac12 n_y  K  (\widetilde\omega \varphi)^2
 + \int_\UU 
    (\widetilde\omega \varphi)^2 \bigl( \frac12 \partial_y  K -  K {\partial_y \widetilde \omega \over \widetilde \omega}). 
 \eean
Observing that 
$$
\int_\UU \fraka \partial_y \varphi \partial_y  (\varphi \widetilde\omega^2) = \int_\UU \fraka (\partial_y  (\varphi \widetilde\omega))^2 - 
\int_\UU \fraka (\varphi \widetilde\omega)^2 { (\partial_y  \widetilde\omega)^2 \over  \widetilde\omega^2}, 
$$
we get 
\bean
\EEE_\lambda(\varphi,\varphi) 
&=& \int_\UU (\lambda - \varpi) (\widetilde\omega \varphi)^2 +  \int_\UU \fraka (\partial_y (\widetilde\omega \varphi))^2  +  \frac12 \int_\OO (\widetilde\omega \varphi)(0,\cdot)^2   
\\
&&- \frac12 \int_{\Gamma_{12}}  (\widetilde\omega \varphi)^2  J n_v - \frac12  \int_{\Gamma_{0}} (\widetilde\omega \varphi)^2   K n_y ,
\eean
for any $\varphi \in C_c^1(\UU \cup \Gamma_-)$, and where $\varpi$ is defined (for $p=2$) in \eqref{def:varpi}. 
We choose $\lambda > 0$ large enough, in such a way that $\lambda - \varpi \ge 1$. 
Because each contribution is then nonnegative separately, the above quadratic form is coercive in the sense that 
$$
\EEE_\lambda(\varphi,\varphi) \ge \min(a_*,1) \| \varphi \|^2_\HHH, \quad \forall \, \varphi \in  C_c^1(\UU \cup \Gamma_-).
$$
We define the linear form 
$$
\ell_\lambda(\varphi) := 
\int_{\Gamma_{2-}} (-J n_v )\mathfrak{g} e^{-\lambda t} \widetilde\omega^2 \varphi 
+ \int_\OO f_0\widetilde\omega^2 \varphi(0,\cdot),  
\quad \forall \, \varphi \in C_c^1(\UU \cup \Gamma_-), 
$$
and we observe that $|\ell(\varphi)| \lesssim \EEE_\lambda (\varphi,\varphi)^{1/2}$ for any $ \varphi \in C_c^1(\UU \cup \Gamma_-)$. 
The above mentioned Lions' theorem implies the existence of a function $f_\lambda \in \HHH$
which satisfies the variational equation 
$$
\EEE_\lambda(f_\lambda,\psi) = \int_{\Gamma_{2-}} \mathfrak{g} e^{-\lambda t} \widetilde\omega^2 \psi  
+ \int_\OO f_0\widetilde\omega^2 \psi(0,\cdot), 
\quad \forall \, \psi \in C_c^1(\UU \cup \Gamma_-). 
$$
Defining $f := f_\lambda e^{\lambda t}$ and testing this variational equation with $\psi :=  \varphi e^{\lambda t}$, we deduce 
\beqn\label{eq:varTOrenorm-0}
 \int_\UU   f (- \partial_t  - J \partial_v)  (\varphi \widetilde\omega^2) + \int_\UU (\fraka \partial_y f  - K   )  \partial_y  (\varphi \widetilde\omega^2)
= \int_{\Gamma_{2-}}  (-J n_v ) \mathfrak{g}   \varphi  \widetilde\omega^2
+ \int_\OO f_0 \varphi(0,\cdot) \widetilde\omega^2,  
\eeqn
for any $ \varphi \in C_c^1(\UU \cup \Gamma_-)$, and thus for any $ \varphi \in H^1_0(\UU \cup \Gamma_-)$, the closure of $C_c^1(\UU \cup \Gamma_-)$ in $H^1(\UU)$. 
That last variational equation is classically a weak formulation of the fact that $f \in \HHH_T$, $\forall \, T > 0$,  is a global solution
to the inflow problem \eqref{eq:VCklinear},  \eqref{eq:VCktBd1-linear}, \eqref{eq:VCktBd2}, \eqref{eq:LionsBd2bis}. It is worth emphasizing that here and always below the no flux condition  \eqref{eq:VCktBd1-linear} has to be understood in the weak sense, namely by the fact that the test functions do not necessarily vanish on $\Gamma_0$.

\smallskip\noindent
\textit{Step 2.}  At least formally, multiplying  the equation \eqref{eq:VCklinear} by $ \beta'(f) \psi$ for a renormalizing function $\beta : \R \to \R$ and a test function $\psi : \UU \to \R$, we have 
\bean
&&\partial_t (\beta(f) \psi) - \beta(f) \partial_t \psi + \partial_v [ J \beta(f) \psi ]  - \beta(f)   \partial_v (J \psi) + (\partial_vJ)f \beta'(f) \psi 
\\
&&\quad   + \partial_y [  \beta'(f) \psi (K f - \fraka \partial_{y} f) ]  -  [\partial_y  (\beta'(f) \psi)] (K f - \fraka \partial_{y} f )  = 0.
\eean
When supp$\psi \subset \UU \cup \Gamma_0 = (0,T) \times (0,v_F) \times [0,\infty)$ and using the no flux condition  \eqref{eq:VCktBd1-linear}, we get 
after integration 
\beqn\label{eq:varTOrenorm-1}
  - \int_\UU \beta(f) [ \partial_t \psi +  \partial_v (J \psi)] 
+ \int_\UU  (\partial_vJ)f \beta'(f) \psi 
 -  \int_\UU  (K f - \fraka \partial_{y} f ) \partial_y  (\beta'(f)\psi)   = 0 .
\eeqn
This step is devoted to sketch a rigorous proof of  \eqref{eq:varTOrenorm-1} by using some classical arguments in the spirit of DiPerna and Lions \cite{MR1022305}. 
For $\beta \in C^1 \cap W^{1,\infty}$, $\psi \in C^1_c( \UU \cup \Gamma_0)$ and a symmetric mollifier $(\rho_\eps)$ in $\DD(\R^2)$, we choose $\varphi_\eps := \widetilde\omega^{-2} \rho_\eps * ( \beta'(f * \rho_\eps ) \psi)$ as a test function in \eqref{eq:varTOrenorm-0}, where $* = *_{t,v}$ denotes the convolution operator in the $t$ and $v$ variables. 
Denoting $f_\eps = f * \rho_\eps$, for $\eps > 0$ small enough, we have 
\beqn\label{eq:varTOrenorm-1eq1}
  \int_\UU   f (- \partial_t  - J \partial_v) (\varphi_\eps \widetilde\omega^2)
= -   \int_\UU    f _\eps    \partial_t ( \beta'(f_\eps) \psi) 
-  \int_\UU   (f J) * \rho_\eps  \partial_v ( \beta'(f_\eps) \psi). 
\eeqn
Observing that $f_\eps \in H^1_0(\UU \cup \Gamma_0)$, so that we may used the chain rule, we  have 
\bean
-   \int_\UU    f _\eps    \partial_t ( \beta'(f_\eps) \psi) 
 &=& \int_\UU  \partial_t  f _\eps   \beta'(f_\eps) \psi 
 \\
  &=& \int_\UU  \partial_t    \beta(f_\eps) \psi  = -  \int_\UU     \beta(f_\eps) \partial_t  \psi. 
\eean
 We similarly have 
 \bean
 -  \int_\UU   (f J) * \rho_\eps  \partial_v ( \beta'(f_\eps) \psi)
= - 
  \int_\UU     \beta(f_\eps)   \partial_v (J\psi) +   \int_\UU  f_\eps  \beta'(f_\eps)  \psi  \partial_v J  
  + R_\eps, 
\eean
with 
$$
R_\eps := \int_\UU  r_\eps \beta'(f_\eps) \psi, 
\quad r_\eps   :=   \partial_v [ (fJ)*\rho_\eps - J (f*\rho_\eps)].
$$
For the last term, we write $r_\eps = r^1_\eps + r^2_\eps$, with 
$$
r^1_\eps := (f \partial_v J) * \rho_\eps - (\partial_v J) f_\eps, 
\quad
r^2_\eps := (J \partial_v f )*\rho_\eps - J \partial_v f_\eps,
$$
where $r^1_\eps \to 0$ in $L^2$ straightforwardly and $r^2_\eps \to 0$ in $L^2$ thanks to DiPerna-Lions commutator Lemma  \cite[Lemma~II.1]{MR1022305}. 
We easily pass to the limit in \eqref{eq:varTOrenorm-1eq1} and we get 
$$
\lim_{\eps\to0}  \int_\UU   f (- \partial_t  - J \partial_v) (\varphi_\eps \widetilde\omega^2)
=  -  \int_\UU     \beta(f) \partial_t  \psi
-  \int_\UU     \beta(f)  \partial_v (J \psi )+ 
   \int_\UU   f   \beta'(f) (\partial_v J)  \psi . 
$$
From the above definition and the fact that $\beta'(f) \in L^2_v L^2_{t,\rm loc} H^1_{y,\rm loc}$, we also have 
\bean
\lim_{\eps \to 0}  \int_\UU (\fraka \partial_y f  - K   )  \partial_y  (\varphi_\eps \widetilde\omega^2)
=    -  \int_\UU  (K f - \fraka \partial_{y} f ) \partial_y  (\beta'(f)\psi).
\eean
All together, we have thus established \eqref{eq:varTOrenorm-1}. 

\smallskip\noindent
\textit{Step 3.} We assume furthermore that $f$ is smooth up to the boundary and we establish three additional estimates. 
Taking $\beta(s) := s^2$ and $\psi = \chi_1(t) \chi_2(v)$, $\chi_1 \in \DD((0,T))$, $\chi_2 \in \DD((0,v_F))$, $\chi_i \ge 0$, in \eqref{eq:varTOrenorm-1}, we get 
$$
{d \over dt} \int_\OO f^2 \chi_2 = \int_\OO I_f, \quad I_f  := 
  f^2[   J \partial_v \chi_2 - \chi_2 \partial_vJ ]
+  2 (K f - \fraka \partial_{y} f ) (\partial_y f ) \chi_2  , 
$$
and integrating twice in the time variable, we obtain
\beqn\label{eq:AprioriBord1}
\sup_{[0,T]}  \int_\OO f^2 \chi_2 \le  T \int_\UU I_f 
 \lesssim  T \| \chi_2 \|_{W^{1,\infty}} \| f \|^2_\HHH. 
\eeqn
Taking now $\psi := \chi_1(t) \chi_2(v)$, $\chi_2 := (v_F/2-v)_+$,   in the equation preceding \eqref{eq:varTOrenorm-1} or equivalently taking $\psi :=  \chi_1(t)  \chi_2(v) \chi_{2\eps}(v)$ in  \eqref{eq:varTOrenorm-1} with $\chi_{2\eps} \to 1$, $\chi_{2\eps}' \wto - \delta_0$ and passing to the limit $\eps \to 0$, we get 
$$
   \int_{\Gamma_{12}^0} f^2  J  \chi_1{v_F \over 2}  = 
   \int_\UU f^2 [ \partial_t \psi+ J\partial_v\psi - \psi \partial_v J] 
   +2 \int_\UU \psi  (K f - \fraka \partial_{y} f ) \partial_y f ,
$$
with $\Gamma_{12}^{0} := (0,T) \times \Sigma_{12}^{0}$, $ \Sigma_{12}^{0} := \{0 \} \times (0,\infty)$.
We deduce 
\beqn\label{eq:AprioriBord2}
  \int_{\Gamma_{12}^0} f^2  J  \chi_1 
 \lesssim     \| \chi_1 \|_{W^{1,\infty}} \| f \|^2_\HHH. 
\eeqn
We finally take $\psi :=  \chi_1(t) \chi_2(v)$, $\chi_2 := J (v-v_F/2)_+$, in the equation preceding \eqref{eq:varTOrenorm-1} and  we get 
$$
   \int_{\Gamma_{12}^{v_F}} f^2  J^2  \chi_1{v_F \over 2}  = 
   \int_\UU f^2 [ - \partial_t \psi -J\partial_v\psi+ \psi \partial_v J ] 
   +2 \int_\UU \psi  (\fraka \partial_{y} f  - K f) \partial_y f ,
$$
with $\Gamma_{12}^{v_F} := (0,T) \times \Sigma_{12}^{v_F}$, $ \Sigma_{12}^{v_F} := \{ v_F \} \times (0,\infty)$. We deduce 
\beqn\label{eq:AprioriBord3}
  \int_{\Gamma_{12}^{v_F}} f^2  J^2  \chi_1 
 \lesssim     \| \chi_1 \|_{W^{1,\infty}} \| f \|^2_\HHH. 
\eeqn

\smallskip\noindent
\textit{Step 4.} 
 Using the same suitably modified convolution trick   as in  the trace theory developed in  \cite{MR1765137,MR2721875,sanchez2023kreinrutman,CM-Landau**} for the Vlasov equation and the kinetic Fokker-Planck equation, we can take up again the arguments used in Step 2 and 
 we easily establish the existence of  a sequence $(f_\eps)$ of $H^1(\UU)$ such that $f_\eps \to f$  strongly in $\HHH$ and a.e. on $\UU$, and such that 
 \bean
  - \int_\UU \beta(f_\eps)[ \partial_t \psi +  \partial_v (J \psi)] 
+ \int_\UU  (\partial_vJ)f_\eps \beta'(f_\eps) \psi 
 -  \int_\UU  (K f - \fraka \partial_{y} f ) \partial_y  \varphi_\eps   = \int_\UU r_\eps \beta'(f_\eps) \psi, 
\eean
for any $\eps>0$, any $\psi \in C^1_c(\UU)$ and any $\beta \in C^1 \cap W^{1,\infty}$, with $r_\eps \to 0$ in $\Lloc^1(\bar\UU)$ and $\varphi_\eps \to \beta'(f) \psi$ in $\HHH$. 
The above relation is the same as the one leading to \eqref{eq:varTOrenorm-1} except on the fact that we do not impose any smallness condition on $\eps > 0$ which should depend on $\psi$.  As a consequence, for $\psi \in C^1_c(\bar\UU)$, we may deduce from it the Green formula which tells us that 
\bear
\label{eq:renormf-eps2}
&&   \Bigl[\int_\OO \beta(f_{\eps t}) \psi \Bigr]_{t_1}^{t_2}+ \int_{t_1}^{t_2}\!\!\int_{\Sigma_{12}} J  h_\eps \psi 
- \int_{t_1}^{t_2}\!\! \int_\OO [\beta(f_\eps) ( \partial_t \psi +  \partial_v (J \psi)) -   (\partial_vJ)f_\eps \beta'(f_\eps) \psi ] 
\\ \nonumber 
&&\quad -  \int_{t_1}^{t_2}\!\! \int_\OO  (K f - \fraka \partial_{y} f ) \partial_y  \varphi_\eps  = \int_{t_1}^{t_2}\!\! \int_\OO r_\eps \beta'(f_\eps) \psi. 
\eear
The same identity holds with $f_\eps$ replaced by $f_{\eps\eps'} := f_\eps - f_{\eps'}$ because this identity has been established starting from the weak formulation \eqref{eq:varTOrenorm-0} which depends linearly on $f$.
Using \eqref{eq:AprioriBord1},  \eqref{eq:AprioriBord2} and  \eqref{eq:AprioriBord3} in Step~3, 
we deduce that $(f_{\eps\eps'})$ tends to $0$ in $C([0,T];\Lloc^2(\OO))$, 
in $L^2((\tau,T-\tau) \times \Sigma^0_{12};Jdydt)$, $\forall \, \tau \in (0,T)$,  
and in  $L^2((\tau,T-\tau) \times \Sigma^{v_F}_{12}; J^2dydt)$, $\forall \, \tau \in (0,T)$,  
so that $(f_\eps)$ is a Cauchy sequence in the same spaces. 
In other words, there exist a function $t \mapsto f_t \in C([0,T];\Lloc^2(\OO))$, and a function $\gamma f$ defined on $\Gamma_{12}$ satisfying 
$\gamma f \in L^2((\tau,T-\tau) \times \Sigma^0_{12}; J dydt)$, $\forall \, \tau \in (0,T)$, 
 and  $\gamma f \in L^2((\tau,T-\tau) \times \Sigma^{v_F}_{12}; J^2dydt)$, $\forall \, \tau \in (0,T)$,  such that 
\bean
&&f_\eps (t, \cdot) \to f_t \hbox{ in }  C([0,T];\Lloc^2(\OO)) \hbox{ and a.e. on } \OO, \ \forall \, t \in [0,T]; 
\\
&&f_\eps  
\to \gamma f  \hbox{ in } L^2((\tau,T-\tau) \times \Sigma^0_{12}; J dydt), \forall \, \tau \in (0,T), \hbox{ and a.e. on } \Gamma_{12}^0; 
\\
&&f_\eps  
\to \gamma f  \hbox{ in } L^2((\tau,T-\tau) \times \Sigma^{v_F}_{12}; J^2dydt), \forall \, \tau \in (0,T), \hbox{ and a.e. on }  \Gamma_{12}^{v_F}.
\eean
Passing to the limit in \eqref{eq:renormf-eps2}, we get 
\bear
\label{eq:renormf-2}
&&   -  \int_{t_1}^{t_2}\!\! \int_\OO [\beta(f) ( \partial_t \psi +  \partial_v (J \psi)) -   (\partial_vJ)f \beta'(f) \psi ]  -  \int_{t_1}^{t_2}\!\! \int_\OO  (K f - \fraka \partial_{y} f ) \partial_y  (\beta'(f)\psi)
\\ \nonumber 
&&\quad +  \Bigl[\int_\OO \beta(f(t,\cdot)) \psi \Bigr]_{t_1}^{t_2}+ \int_{t_1}^{t_2}\!\!\int_{\Sigma_{12}} J n_v \beta(\gamma f) \psi =0 , 
\eear
for any $\psi \in C^1_c(\bar\UU)$ and any $\beta \in C^1 \cap W^{1,\infty}$, and we use indifferently the notations $f_t = f(t,\cdot)$.  
Because $f \in \HHH$, the function $f$ admits a trace on $\Gamma_0$, also denoted by $\gamma f \in L^2(\Gamma_0;dtdv)$, and integrating  twice by part, we have 
\bean
&& -  \int_{t_1}^{t_2}\!\! \int_\OO  K f \partial_y  (\beta'(f)\psi)
=  -  \int_{t_1}^{t_2}\!\! \int_{\Sigma_0}  n_y K \gamma f \beta'(\gamma f)\psi 
+   \int_{t_1}^{t_2}\!\! \int_\OO \beta'(f)\psi \partial_y  ( K f )
\\
&&\qquad =    \int_{t_1}^{t_2}\!\! \int_{\Sigma_0}    K \gamma f \beta'(\gamma f)\psi 
+   \int_{t_1}^{t_2}\!\! \int_\OO [ \beta'(f) f \psi \partial_y K  + \psi K \partial_y \beta(f) ]
\\
&&\qquad =   \int_{t_1}^{t_2}\!\! \int_{\Sigma_0}    K (\gamma f \beta'(\gamma f) - \beta(\gamma f))\psi 
+   \int_{t_1}^{t_2}\!\! \int_\OO [ \beta'(f) f \psi \partial_y K  - \beta(f) \partial_y(\psi K)  ].
\eean
Together with \eqref{eq:renormf-2}, we also have 
\bear
\label{eq:renormf-3}
&&
 \Bigl[\int_\OO \beta(f(t,\cdot)) \psi \Bigr]_{t_1}^{t_2}+ \int_{t_1}^{t_2}\!\!\int_{\Sigma_{12}} J n_v \beta(\gamma f) \psi 
 +  \int_{t_1}^{t_2}\!\! \int_{\Sigma_0}    K [ \gamma f \beta'(\gamma f) - \beta(\gamma f) ] \psi 
\\ \nonumber 
&&   -  \int_{t_1}^{t_2}\!\! \int_\OO [\beta(f) ( \partial_t \psi +  \partial_v (J \psi) + \partial_y(\psi K) ) -   (\partial_vJ + \partial_y K)f \beta'(f) \psi ] 
\\ \nonumber 
&&\quad +  \int_{t_1}^{t_2}\!\! \int_\OO    \fraka \partial_{y} f  \partial_y  (\beta'(f)\psi) =0 , 
\eear
for any $\psi \in C^1_c(\bar\UU)$ and any $\beta \in C^1 \cap W^{1,\infty}$.

 \smallskip\noindent
\textit{Step 5.} Because of the previous estimates on $f$, $f_t$ and $\gamma f$, we may take $\beta(s) = s$ in \eqref{eq:renormf-2} by using an approximation procedure in the three following cases (and more precisely, we apply  \eqref{eq:renormf-2} to a renormalized function $\beta_n \in C^1 \cap W^{1,\infty}$, we assume that $\beta_n(s) \to s$, $\beta_n'(s) \to 1$, with $|\beta_n(s)| \le |s|$, $|\beta_n'(s)| \le 1$, and we pass to the limit as $n\to\infty$). 
Taking $\psi = \chi_1(t) \chi_2$, $\chi_1 \in C^1_c([0,T))$, $\chi_2 \in C^1_c(\OO)$, we have 
\bean
   - \int_\OO  f(0,\cdot) \psi (0,\cdot) - \int_\UU f  ( \partial_t \psi +  J \partial_v \psi )
 -   \int_\UU  (K f - \fraka \partial_{y} f ) \partial_y  \psi   = 0. 
\eean
Together with \eqref{eq:varTOrenorm-0}, we deduce that $f(0,\cdot) = f_0$. Taking now $\psi \in C^1_c((0,T) \times (\OO \cup \Sigma_{12}^-))$, we also have 
\bean
 \int_{\Gamma^-_{12}} J n_v   \gamma f \psi - \int_\UU  f ( \partial_t \psi + J  \partial_v  \psi )
 -  \int_\UU  (K f - \fraka \partial_{y} f ) \partial_y  \psi   = 0, 
\eean
Together with \eqref{eq:varTOrenorm-0}, we deduce that $\gamma f = 0$ on $\Gamma_1$ and $\gamma f = \frak g$ on $\Gamma_2^-$.

\smallskip
Because of the above  identification of the trace functions, the renormalized equation \eqref{eq:renormf-2} writes now
\bear
\label{eq:renormf-4}
&&  \int_\OO \beta(f_T) \psi + \int_{\Gamma_{2}^+} J   \beta(\gamma f)  \psi  = 
 \int_\UU \beta(f) [ \partial_t \psi +  \partial_v (J \psi)]   - \int_\UU  (\partial_vJ)f \beta'(f) \psi 
\\ \nonumber
&&\quad +   \int_\UU  (K f - \fraka \partial_{y} f ) \partial_y  (\beta'(f)\psi)   + 
  \int_{\Gamma^-_{2}} J  \beta(\frak g)  \psi +  \int_\OO \beta(f_0) \psi ,
\eear
for any $\beta \in C^1 \cap W^{1,\infty}$ and $\psi \in C^1_c(\bar\UU)$.
Using that $f \in \HHH$, $f_0 \in L^2_\omega$ and $J^{1/2} \frak g \in L^2_\omega(\Gamma_2^-)$ at the RHS, 
we may take $\psi = \widetilde \omega^2$ and a sequence $(\beta_n)$ of $C^1 \cap W^{1,\infty}$ such that $0 \le \beta_n(s) \le s^2$, $|\beta'_n(s)| \le 2|s|$,  $\beta_n \to s^2$, $\beta_n'(s) \to 2s$, and passing to the limit we deduce 
\bean
&& \int_\OO f_T^2 \widetilde \omega^2  + \int_{\Gamma_{2}^+} J   \gamma f^2 \widetilde \omega^2   + \int_{\Gamma_0}  K   \gamma f^2 \widetilde \omega^2  
+  \int_\UU \fraka (\partial_{y} ( \omega  f ))^2  + \int_\UU f^2 \varpi_-  \widetilde \omega^2
\\
&&\quad \le 
  \int_\UU f^2 \varpi_+  \widetilde \omega^2  +   \int_\OO f_0^2 \widetilde \omega^2  
 +  \int_{\Gamma^+_{2}} J   \frak g^2   \widetilde \omega^2 , \quad \forall \, T > 0.
 \eean
Applying the Gronwall lemma to the function $t \mapsto \| f_t \|_{L^2_{\tilde\omega}}^2$ which   belongs to $L^1(0,T)$ (because $f \in \HH)$ and is lsc (because of the regularity  of $f$ obtained in Step 4), we deduce 
\bear
\label{eq:Gronwall-bdL2}
&&\| f_t \|^2_{L^2_{\tilde\omega}} + \int_0^t \left( \| \gamma f_s \|^2_{L^2_{\tilde\omega}(\Sigma_2^+ \cup \Sigma_0 ; d \xi_1)} +    \| f \|^2_{H^{1,\dagger}_{\tilde\omega}} \right) \, e^{\lambda_0(t-s)} \, d  s 
\\ \nonumber
&&\qquad\le
 \| f_0 \|^2_{L^2_{\tilde\omega}} e^{\lambda_0 t} + \int_0^t  \| \mathfrak{g}_s \|^2_{L^2_{\tilde\omega}(\Sigma_2^-; d \xi_1)}   \, e^{\lambda_0(t-s)} \, d  s,
\eear
for any $t > 0$, with  $\lambda_0 := \sup \varpi_+ < \infty$ and  
$$
\| f \|^2_{H^{1,\dagger}_{\tilde\omega}} :=  \int_\OO a_*  (\partial_{y} ( \omega  f ))^2  + \int_\OO f^2 \varpi_-  \widetilde \omega^2. 
$$

 \smallskip\noindent
\textit{Step 6.} With that last estimate at end, we may pass to the limit in \eqref{eq:renormf-2} written with $\beta_n \to s^2$ and $\psi_n \to \widetilde \omega^2$ and we deduce the identity 
 \bean
&& \| f_{t_2} \|_{L^2_{\tilde\omega}}^2  +  \int_{t_1}^{t_2} \!\! \int_{\Sigma_{2}^+} J   \gamma f^2 \widetilde \omega^2  +  \int_{t_1}^{t_2} \!\! \int_{\Sigma_0} K   \gamma f^2 \widetilde \omega^2  
+ \int_{t_1}^{t_2} \!\!  \int_\OO \fraka (\partial_{y} ( \omega  f ))^2  
\\
&&\quad = \int_{t_1}^{t_2}  \!\!   \int_\OO f^2 \varpi  \widetilde \omega^2  +   \| f_{t_1} \|_{L^2_{\tilde\omega}}^2 
 + \int_{t_1}^{t_2} \!\!  \int_{\Sigma^-_{2}} J   \frak g^2   \widetilde \omega^2 ,  
 \eean
for any $0 \le t_1 \le t_2 < \infty$, in particular $t \mapsto  \| f_{t} \|_{L^2_{\tilde\omega}}^2$ is continuous. Together with the already known weak continuity property, we classically deduce that $f \in C(\R_+;L^2_{\tilde\omega})$. Similarly, taking  $\beta_n \to s_-^2$ and $\psi_n \to \widetilde \omega^2$ in \eqref{eq:renormf-2}, we deduce 
 \bean
  \| f_-(t) \|_{L^2_{\tilde\omega}}^2 
 \le \int_{0}^{t}  \!\!   \int_\OO f_-^2 \varpi  \widetilde \omega^2  +   \| f_{0-} \|_{L^2_{\tilde\omega}}^2 
 + \int_{t_1}^{t_2} \!\!  \int_{\Sigma^+_{2}} J   \frak g_-^2   \widetilde \omega^2, \quad \forall \, t > 0. 
 \eean
 Thanks to the Gronwall lemma, we deduce the positivity property: $f(t) \ge 0$ if $ \frak g \ge 0$ and $f_0 \ge 0$. 
 Similarly, for two solutions $f_1$ and $f_2$ to the inflow problem \eqref{eq:VCklinear},  \eqref{eq:VCktBd1-linear}, \eqref{eq:VCktBd2}, \eqref{eq:LionsBd2bis} in the sense of the variational formulation  \eqref{eq:varTOrenorm-0}, the difference $f := f_2 - f_1$ is also a solution to the  variational problem  \eqref{eq:varTOrenorm-0} but associated to $f_0 = {\frak g} = 0$. Applying the conclusion \eqref{eq:Gronwall-bdL2} to that solution, we  get $f = 0$, and we have thus proved the uniqueness of the solution to the inflow problem \eqref{eq:VCklinear},  \eqref{eq:VCktBd1-linear}, \eqref{eq:VCktBd2}, \eqref{eq:LionsBd2bis}.

\smallskip\noindent
\textit{Step 7.}  
We briefly explain how we may deduce the existence and uniqueness of a solution to the linear  VCk equation \eqref{eq:VCklinear},   \eqref{eq:LinearVCkBd}.
We define $g_0 = 0$ and next recursively the sequence $(g_k)$ defined for $k \ge 1$ as the solution to the equation
\bean
&&\partial_t g_k + \partial_v(J g_k) + \partial_y (K g_k - \fraka \partial_y g_k) = 0  \, \hbox{ in } \, \UU, 
\\
&&g_k(0) = f_0 \, \hbox{ in } \, \OO, \quad  K g_k - \fraka \partial_y g_k = 0 \, \hbox{ on } \, \Gamma_0, 
\\
&&\gamma g_k = 0 \, \hbox{ on } \, \Gamma_1,   \quad J(0,\cdot)  \gamma_- g_k =  J(v_F,\cdot)  \gamma_+  g_{k-1} \, \hbox{ on } \, \Gamma_{2-}
\eean 
provided by the previous steps. The Cesaro means
$$
f_k := {1 \over k} (g_1 + \cdot + g_k)
$$
is then a solution to the equation 
\bean
&&\partial_t f_k + \partial_v(J f_k) + \partial_y (K f_k - \fraka \partial_y f_k) = 0  \, \hbox{ in } \, \UU, 
\\
&&f_k(0) = f_0 \, \hbox{ in } \, \OO, \quad  K f_k - \fraka \partial_y f_k = 0 \, \hbox{ on } \, \Gamma_0, 
\\
&&\gamma f_k = 0 \, \hbox{ on } \, \Gamma_1,   \quad J(0,\cdot)  \gamma_- f_k =  (1-1/k) J(v_F,\cdot)  \gamma_+  f_{k-1} \, \hbox{ on } \, \Gamma_{2-}. 
\eean 
Using repeatingly the  estimates \eqref{eq:Gronwall-bdL2}  corresponding to the equations on $(g_k)$, we deduce that $(g_k)$ is bounded in $\HHH_T$, $\forall \, T >0$. 
Similarly, summing up the estimates \eqref{eq:Gronwall-bdL2} corresponding to the equations on $g_1$, \dots, $g_k$ and using the elementary inequality
$$
\bigl(  {1 \over k} (g_1 + \cdot + g_k) \bigr)^2 \le  {1 \over k} (g^2_1 + \cdot + g^2_k), 
$$
we deduce 
\bear
\label{eq:Gronwall-bdL2-fk}
 \| f_k(t) \|^2_{L^2_{\tilde\omega}} + \int_0^t    \| f_k(s) \|^2_{H^{1,\dagger}_{\tilde\omega}}   \, e^{\lambda_0(t-s)} \, d  s 
\le 
 \| f_0 \|^2_{L^2_{\tilde\omega}} e^{\lambda_0 t}, \quad \forall \, t > 0. 
\eear
We deduce that the exist $f \in \HH_T \cap L^\infty(0,T;L^2_\omega)$, $\forall \, T > 0$, and a subsequence $(f_{k_n})$ such that $f_{n_k} \wto f$ in  $\HHH_T \cap L^\infty(0,T;L^2_\omega)$. Passing first to the limit in the variational formulation of the equation on $(f_{k_n})$ with test functions in $C^1_c(\UU \cup \Gamma_0)$, we deduce 
$$
 \int_\UU   f (- \partial_t  - J \partial_v)  \psi  + \int_\UU (\fraka \partial_y f  - K   )  \partial_y  \psi 
=  0, 
$$
for any $ \psi \in C_c^1(\UU \cup \Gamma_0)$. Repeating the arguments of sections 2, 3 and 4, we know that there exists a trace function $\gamma f$ on $\Gamma$ connected to $f$ through the renormalized formulations \eqref{eq:renormf-2} and \eqref{eq:renormf-3}. 
Proceeding as in section 3, we also get that $(\gamma g_{k_n})$ and  $(\gamma f_{k_n})$ are bounded in $L^2(\Gamma^0_{12}, d\xi_1)$ and in $L^2(\Gamma^{v_F}_{12}, d\xi_2)$.
Passing to the limit  in the variational formulation of the equation on $(f_{k_n})$ with test functions in $C^1_c(\UU \cup \Gamma^0_{12})$ and  in $C^1_c(\UU \cup \Gamma^{v_F}_{12})$, we then deduce first 
$$
\gamma f_{k_n}   \wto \gamma f \ \hbox{ in } \ L^2(\Gamma^0_{12}, d\xi_1) \cap L^2(\Gamma^{v_F}_{12}, d\xi_2) \ \hbox{ as } \ k \to \infty 
$$
and next  
$$
\gamma f_{k_n - 1} = {k_n \over k_n -1} \gamma f_{k_n} - {1 \over k_n -1} \gamma g_{k_n}  \wto \gamma f \ \hbox{ in } \  L^2(\Gamma^{v_F}_{12}, d\xi_2) \ \hbox{ as } \ k \to \infty . 
$$
We may then pass to the limit in the variational formulation associated to the equation on $f_{k_n}$ with test functions in $C_c^1(\UU \cup \Gamma_-)$ and we deduce that $f$ is a variational solution 
 to the linear  VCk equation \eqref{eq:VCklinear},   \eqref{eq:LinearVCkBd},   and more precisely
\beqn\label{eq:var}
 \int_\UU   f (- \partial_t  - J \partial_v)  \psi + \int_\UU (\fraka \partial_y f  - K   )  \partial_y  \psi
= \int_0^T \!\! \int_{y_F}^\infty   (J \gamma f)(t,v_F,y)  \psi(t,0,y), 
+ \int_\OO f_0 \psi(0,\cdot) 
\eeqn
for any $ \psi \in C_c^1(\UU \cup \Gamma_-)$.  Repeating the arguments of Step 6, we similarly establish the further properties of the solution $f$. 
\end{proof}

 \bigskip 
   \section{Ultracontractivity}
\label{sec:ultra}

This section is dedicated to the proof of the ultracontractivity Theorem~\ref{theo-Ultra}, in weighted Lebesgue spaces $L^p_\omega$ for a  strongly confining weight function $\omega := e^{\alpha y}$,   $\alpha > 0$. 

 \subsection{A boundary penalization $L^2$ estimate}
 We take back the $L^2$ estimate established in Section~\ref{subsec:GrowthEstimPrimal} and improve it by introducing a penalization of the neighborhood of the boundary $\Sigma_{12}$ thanks to a suitable power of the distance 
 $\delta(v) :=   \min (v,v_F-v)$.

\begin{lem}\label{lem:estimL2+}
Assume $\omega := e^{\alpha y}$, $\alpha > 0$. 
There exist a weight function $\widecheck\omega = \widecheck \omega(v,y) \simeq \omega$ and a constant $\widecheck\kappa \ge 0$  such that  
any solution $f $ to the linear VCk equation  \eqref{eq:VCklinear}-\eqref{eq:LinearVCkBd}  
satisfies 
 \beqn\label{eq:lem:estimL2+}
 \int_0^T \varphi^2 \int_\OO  \Bigl\{ f^2 \omega^2 \Bigl[ {J^2 \over \delta^{1/2} \langle y \rangle^2} +  \alpha \langle y \rangle \Bigr] + (\partial_y ( f \widecheck\omega) )^2 \Bigr\}  \le \widecheck\kappa 
\int_0^T ( \varphi \varphi'_+ + \varphi^2)  \int_\OO  f^2  \omega^2,
\eeqn 
 for any $\varphi \in C^1_c(0,T)$ and where   $\widecheck\kappa$ only depends on $a^*,a_*,y_*$. 
 \end{lem}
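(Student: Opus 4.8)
The plan is to revisit the $p=2$ weighted $L^2$ energy identity established inside the proof of Lemma~\ref{lem:GrowthLp} and to keep two quantities that were thrown away there: the $y$-confinement hidden in $\varpi$ for an exponential weight (which will give the $\alpha\langle y\rangle$ term) and a new boundary-layer penalisation in the voltage variable (which will give the $J^2\delta^{-1/2}\langle y\rangle^{-2}$ term). To produce the latter I would replace the modified weight $\widetilde\omega$ by $\widecheck\omega$ with $\widecheck\omega^2:=\zeta\,\widetilde\omega^2$, where $\zeta=\zeta(v,y)$ is \emph{bounded above and below by positive constants} --- so that $\widecheck\omega\simeq\widetilde\omega\simeq\omega$ --- but carries a $\delta(v)^{1/2}$-type boundary layer near $\Sigma_{12}$, i.e. $|\partial_v\log\zeta|\simeq\delta(v)^{-1/2}$ close to the voltage boundary and $\partial_v\log\zeta\simeq 0$ away from it, with the sign of $\partial_v\log\zeta$ suitably tuned; $\zeta$ is well defined and bounded precisely because $\delta^{-1/2}$ is integrable in $v$. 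Running the computation of Lemma~\ref{lem:GrowthLp} verbatim with $\widecheck\omega$ in place of $\widetilde\omega$ and $p=2$ gives
\[
\tfrac12\,\tfrac{d}{dt}\int_\OO f^2\widecheck\omega^2 \;+\; \int_\OO \fraka\,(\partial_y(f\widecheck\omega))^2 \;+\; (\text{boundary terms on } \Sigma_0 \text{ and } \Sigma_{12}) \;=\; \int_\OO f^2\widecheck\omega^2\,\widecheck\varpi ,
\]
with $\widecheck\varpi=\varpi+\tfrac12 J\,\partial_v\log\zeta+(\text{terms that are } O(1) \text{ away from the boundary and negligible inside the layer})$.

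Next I would check that the boundary terms still close. On $\Sigma_0$ this is immediate for $p=2$: the coefficient of $\partial_y\widecheck\omega/\widecheck\omega$ vanishes and one is left with $-\tfrac12\int_{\Sigma_0}K f^2\widecheck\omega^2\le 0$, droppable regardless of $\zeta$. On $\Sigma_1$ one has $f=0$. On $\Sigma_2$ one argues as in Lemma~\ref{lem:GrowthLp} using $J_\xi>J$ and the reset identity $(Jf)(0,\cdot)=(Jf)(v_F,\cdot)$; with $\zeta\equiv1$ the net $\Sigma_2$-contribution is already strictly negative with a quantitative gap proportional to $J_\xi-J$, and a non-constant $\zeta$ perturbs it only by $O(\eps)$ (where $\eps$ measures the size of the twist), so it remains $\le0$ once $\eps$ is small relative to the parameter $\xi$ in $J_\xi$; hence all boundary terms are discarded. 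The whole game is then to bound $\widecheck\varpi$. Away from the voltage boundary, $\widecheck\varpi=\varpi+O(1)$ and the asymptotics of $\varpi$ for $\omega=e^{\alpha y}$ established in Lemma~\ref{lem:GrowthLp} give $\widecheck\varpi\le\widecheck\kappa-c\,\alpha\langle y\rangle$, while there $J^2\langle y\rangle^{-2}\delta^{-1/2}=O(1)$ is harmlessly absorbed into $\widecheck\kappa$. Inside the layer, the new term is, by the sign choice, $\tfrac12 J\,\partial_v\log\zeta\le -c\,J^2\langle y\rangle^{-2}\delta(v)^{-1/2}$ --- the square $J^2$ appearing because the surviving contribution is $|J|\cdot|\partial_v\log\zeta|$ with \emph{both} factors carrying $J$ with a fixed sign --- and still $\varpi\le\widecheck\kappa-c\,\alpha\langle y\rangle$. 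Altogether $\widecheck\varpi\le\widecheck\kappa-c\bigl(\alpha\langle y\rangle+J^2\langle y\rangle^{-2}\delta^{-1/2}\bigr)$ on $\OO$, so that $\int_\OO f^2\widecheck\omega^2\widecheck\varpi\le\widecheck\kappa\int_\OO f^2\widecheck\omega^2-c\int_\OO f^2\widecheck\omega^2\bigl(\alpha\langle y\rangle+J^2\langle y\rangle^{-2}\delta^{-1/2}\bigr)$; combined with $\widecheck\omega\simeq\omega$ and $\fraka\ge a_*$ this yields the pointwise-in-time inequality $\tfrac12\tfrac{d}{dt}\int_\OO f^2\widecheck\omega^2+D(t)\le\widecheck\kappa\int_\OO f^2\widecheck\omega^2$, with $D(t)$ the full dissipation appearing on the left of \eqref{eq:lem:estimL2+}, and with $\widecheck\kappa$ depending only on $a^*,a_*,y_*$ as in Lemma~\ref{lem:GrowthLp}.

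To conclude I would multiply by $\varphi(t)^2$ for $\varphi\in C^1_c(0,T)$, integrate in $t$, and move $\partial_t$ onto $\varphi^2$, using $\int_0^T\varphi^2\,\tfrac{d}{dt}\!\int_\OO f^2\widecheck\omega^2=-2\int_0^T\varphi\varphi'\!\int_\OO f^2\widecheck\omega^2\le 2\int_0^T\varphi\varphi'_+\!\int_\OO f^2\widecheck\omega^2$ (since $\varphi'\le\varphi'_+$) together with $\widecheck\omega\simeq\omega$; this is exactly \eqref{eq:lem:estimL2+}. All of this is made rigorous, rather than merely formal, by running it through the renormalised formulation of Theorem~\ref{theo:existL2}: one applies it with $\beta=\beta_n\nearrow s^2$ and test function $\psi=\varphi^2\widecheck\omega^2$ (which need not vanish on $\Gamma_0$, thereby encoding the no-flux condition), exactly as in Steps~5--6 of that proof, and passes to the limit $n\to\infty$.

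The main obstacle is the construction of $\zeta$: one has to obtain a definite favourable sign for $\tfrac12 J\,\partial_v\log\zeta$ simultaneously on all the pieces of the voltage boundary --- the outflow part $\Sigma_2^+$, the inflow part $\Sigma_2^-$ (whose trace is pinned by the reset condition, so the twist must not amplify the inflow gain term), the no-flux wall $\Sigma_1$ (where $f=0$ but the vector field drains \emph{away} from the boundary), and especially the grazing corner $(v_F,y_F)$ where $J$ degenerates and $\operatorname{sgn}J$ flips --- all while keeping $\zeta$ uniformly bounded above and below and not breaking the $\Sigma_2$ reset cancellation beyond an $O(\eps)$ error. It is exactly the $J^2$-weighting in \eqref{eq:lem:estimL2+} (rather than $|J|$) that makes these requirements compatible, since $J^2$ is insensitive to $\operatorname{sgn}J$ and degenerates at the corner precisely where no penalisation can be expected.
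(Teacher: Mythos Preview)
Your overall strategy is the same as the paper's: run the $p=2$ energy identity of Lemma~\ref{lem:GrowthLp} with a twisted weight $\widecheck\omega^2=\zeta\,\widetilde\omega^2$, $\zeta\simeq1$, so that the new drift term $\tfrac12 J\,\partial_v\log\zeta$ manufactures the penalisation $-c\,J^2\langle y\rangle^{-2}\delta^{-1/2}$, keep the $\alpha\langle y\rangle$ confinement from $\varpi$, then multiply by $\varphi^2$ and integrate in time. The $\Sigma_0$ and $\Sigma_1$ boundary arguments and the final time-integration step are correct as you wrote them.

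There are, however, two related gaps. First, your description of $\zeta$ is internally inconsistent: you first require $|\partial_v\log\zeta|\simeq\delta(v)^{-1/2}$, which would make $\zeta$ essentially a function of $v$ alone, but then say the square $J^2$ arises because ``both factors carry $J$ with a fixed sign'', which forces $\partial_v\log\zeta$ itself to contain a factor of $J$ (hence to depend on $y$). Only the second requirement can produce $J^2\langle y\rangle^{-2}\delta^{-1/2}$; with the first you would get $|J|\delta^{-1/2}$, which has no controllable sign across the grazing set $\{J=0\}$. The paper makes this explicit by taking $\zeta=\frak P:=1-\tfrac12\,\delta_*^{-1/2}\,\delta^{1/2}\delta'\,J\langle J\rangle^{-2}$ (with a $C^2$ regularised $\delta$ and $\delta_*:=\sup\delta(\delta')^2$), so that the leading piece of $\partial_v\frak P$ is $-\tfrac14\,\delta_*^{-1/2}\,\delta^{-1/2}(\delta')^2\,J\langle J\rangle^{-2}$ and therefore $\tfrac12 J\,\partial_v\log\frak P\le -c\,J^2\langle y\rangle^{-2}\delta^{-1/2}+O(1)$ automatically, with no sign tuning.

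Second, and more seriously, your $\Sigma_2$ argument does not close. You want to absorb an $O(\eps)$ perturbation into the ``quantitative gap proportional to $J_\xi-J$'', but for $y>2y_F$ one has $\xi\equiv0$, hence $J_\xi=J$, and the Lemma~\ref{lem:GrowthLp} computation gives the $\Sigma_2$ contribution \emph{exactly} zero after the reset identity: there is no gap to absorb anything, however small $\eps$ is. The paper sidesteps this completely: because $\frak P-1$ carries an explicit factor $\delta^{1/2}$, one has $\frak P\equiv1$ on $\Sigma_{12}$, so $\widecheck\omega=\widetilde\omega$ on the voltage boundary and the $\Sigma_2$ cancellation of Lemma~\ref{lem:GrowthLp} goes through verbatim, with no perturbation term at all. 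This single structural choice also dissolves all the sign-matching concerns in your last paragraph (outflow vs.\ inflow vs.\ grazing corner), since nothing is being changed on $\Sigma_{12}$.
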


\begin{proof}[Proof of Lemma~\ref{lem:estimL2+}] We come back to the proof of Lemma~\ref{lem:GrowthLp}. We define
\beqn\label{eq:def_baromega}
 \widecheck\omega^2 := \widetilde \omega^2 \frak P, \quad \frak P := 1 - \frac12 {\delta^{1/2}  \over \delta_*^{1/2}} {\delta' J \over \langle J \rangle^2}, 
\eeqn
with $\delta_* := \sup [\delta (\delta')^2]$,   $\widetilde \omega^2$ defining during the  proof of Lemma~\ref{lem:GrowthLp} and where we abuse notation by rather denoting $\delta \in C^2(\R)$  such that $\delta  > 0$  on $(0,v_F)$ and   $\delta(v)=   \min (v,v_F-v)$ for any $v \in (0,v_F/3) \cup (2v_F/3,v_F)$ so that it is equivalent to establish \eqref{eq:lem:estimL2+} for the previously defined  $\delta$ or this smooth variant.
  Proceeding exactly as during the  proof of Lemma~\ref{lem:GrowthLp}, we have 
\bean
 \frac12  {d \over dt} \int_\OO f^2 \widecheck\omega^2
&=& -   \int_\OO \fraka (\partial_y (f\widecheck\omega) )^2 +  \int_\OO  f^2 \widecheck\omega^2 \varpi
\\
&& + \int_{\Sigma_0} \frac12 K    n_0     (f \widecheck\omega)^2  
-   \frac1p \int_{\Sigma_2} J f^2 \widetilde\omega^2 n_v,
\eean
with 
\bean
\varpi :=   \fraka \bigl( {\partial_y \check\omega \over \check\omega} \bigr)^2
+ K  {\partial_y \check\omega \over \check\omega} +  J  {\partial_v \check\omega \over  \check\omega} 
- \frac12 \partial_y K 
- \frac12 \partial_v J, 
\eean
and thus again 
\bean
 \frac12  {d \over dt} \int_\OO f^2 \widecheck\omega^2
 +   \int_\OO a_* (\partial_y (f\widecheck\omega) )^2  \le \int_\OO  f^2 \widecheck\omega^2 \widecheck \varpi. 
\eean
We observe that 
$$
{\partial_z \check \omega \over \check \omega} = {\partial_z \widetilde \omega \over  \widetilde  \omega} + \frac12 {\partial_z \frakP  \over \frakP}  
$$
for $z=y,v$, and thus 
$$
\widecheck \varpi = \widetilde \varpi +  \fraka {\partial_y\widetilde \omega \over  \widetilde  \omega}   {\partial_y \frakP  \over \frakP}  + \frac{K}2  {\partial_y \frakP  \over \frakP}  
 + \frac{J}2  {\partial_v \frakP  \over \frakP}  
$$
with $\widetilde \varpi $ defined by \eqref{def:varpi}, 
$$
\partial_v \frakP =  - \frac14 {1 \over \delta^{1/2}} {(\delta')^2 \over \delta_*^{1/2}} {  J \over \langle J \rangle^2} + {\phi_1 \over \langle J \rangle},
\quad  \phi_1 \in L^\infty(\OO), 
$$
and
$$
\partial_y \frakP =  - \frac12 {\delta^{1/2}  \over \delta_*^{1/2}}  \delta' \partial_y {J \over \langle J \rangle^2} =  {\phi_2 \over \langle J \rangle^2},
\quad  \phi_2 \in L^\infty(\OO).
$$
We deduce that 
\beqn
\label{eq:checkvarpi}
\widecheck \varpi \le \check\kappa -  {1 \over 8 \delta_*^{1/2}} {1 \over \delta^{1/2}} {J^2 \over \langle y \rangle^2}   - \alpha    \langle y \rangle
\eeqn
with $\check\kappa \le C (1 + \| \fraka \|_{L^\infty} +  \| \frakb \|_{L^\infty})$ and where we observe that $\langle J \rangle \simeq \langle y \rangle$. 
The previous estimate gives
\bean
 \frac12  {d \over dt} \int_\OO f^2 \widecheck\omega^2
 +   \int_\OO f^2  \widecheck\omega^2 \Bigl( {1 \over 8 \delta_*^{1/2}} {1 \over \delta^{1/2}} {J^2 \over \langle y \rangle^2}  + \alpha    { \langle y \rangle} \Bigr)
  +   \int_\OO a_* (\partial_y (f\widecheck\omega) )^2   \le \check\kappa \int_\OO  f^2 \omega^2. 
\eean
We conclude by multiplying the above equation by $\varphi^2$ and integrating in the time variable. 
\end{proof}
 
\medskip
 
We reformulate that last result in a more tractable one by using several times the following estimate 
\beqn\label{eq:HardyIneq}
  \int_a^c \frac1{|y-b|^{\mu}} h^2  dy \lesssim  \int_a^c ((\partial_y h)^2 + h^2) dy, 
\eeqn
for   $\mu \in (0,1)$ and any $h \in H^1(a,c)$, $a,b \in \R$, $c \in \R  \cup \{+\infty\}$, $a \le b \le c $. The estimate \eqref{eq:HardyIneq} is an immediate consequence of the classical embedding $H^1(a,c) \subset L^\infty(a,c)$ used in the region where $|y-b|  \le  1$ in the LHS integral of \eqref{eq:HardyIneq}. 

\begin{lem}\label{cor1:L2estim+}
Assume $\omega := e^{\alpha y}$,   $\alpha > 0$.
With the notations of Lemma~\ref{lem:estimL2+},  any solution $f $ to the linear VCk equation  \eqref{eq:VCklinear}-\eqref{eq:LinearVCkBd}  satisfies 
$$
\int_0^T \varphi^2 \int_\OO  f^2 {1  \over \delta^{1/9}} {\omega^2 \over \langle y \rangle^2} \lesssim
\int_0^T (\varphi \varphi'_+  + \varphi^2) \int_\OO  f^2  \omega^2
$$
for any $\varphi \in C^1_c(0,T)$ 
\end{lem}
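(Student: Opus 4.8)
The plan is to post-process the estimate \eqref{eq:lem:estimL2+} of Lemma~\ref{lem:estimL2+} by a dichotomy in the $y$-variable, carried out at fixed $(t,v)$, which trades the degenerate weight $J^2/(\delta^{1/2}\langle y\rangle^2)$ against the $H^1_y$ term $(\partial_y(f\widecheck\omega))^2$ on the small region where $J$ vanishes, by means of the one-dimensional Hardy-type bound \eqref{eq:HardyIneq}. The starting point is the factorisation $J(v,y) = (v_E-v)\,(y-\bar y(v))$ with $\bar y(v) := y_L v/(v_E-v)$. Since $0<v<v_F<v_E$ we have $v_E-v \ge v_E-v_F>0$ and $\bar y(v) \in [0,y_F]$; in particular $|J(v,y)| \ge (v_E-v_F)\,|y-\bar y(v)|$, and the zero $\bar y(v)$ of $y\mapsto J(v,y)$ stays in the fixed bounded interval $[0,y_F]$, independently of $v$. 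Consequently, on a neighbourhood of $\bar y(v)$ of width $\delta(v)^\theta$ (with $\theta>0$ a small power and $\delta(v)\le D$ bounded), the variable $y$ ranges over a bounded set $I_0 := (0, y_F + D^\theta)$ not depending on $v$, and on $I_0$ all the exponential weights at play satisfy $\langle y\rangle \simeq 1$, $\omega \simeq 1$, $\widecheck\omega\simeq 1$ (and $\widecheck\omega\simeq\omega$ everywhere).

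Then one fixes $t\in(0,T)$, $v\in(0,v_F)$, writes $\delta=\delta(v)$, $h=(f\widecheck\omega)(t,v,\cdot)$, and splits $(0,\infty)=A\cup B$ with $A:=\{\,|y-\bar y(v)|\ge\delta^{\theta}\,\}$ and $B:=\{\,|y-\bar y(v)|<\delta^{\theta}\,\}\subset I_0$, choosing $\theta:=7/36$ and, later, $\mu:=4/7$. On $A$ one has $J^2\ge(v_E-v_F)^2\delta^{2\theta}$, hence $J^2/\delta^{1/2}\ge(v_E-v_F)^2\delta^{2\theta-1/2}=(v_E-v_F)^2\delta^{-1/9}$ because $2\theta-1/2=-1/9$; therefore $\delta^{-1/9}f^2\omega^2\langle y\rangle^{-2}\lesssim \delta^{-1/2}J^2f^2\omega^2\langle y\rangle^{-2}$ on $A$. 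On $B$ one has $f^2\omega^2\langle y\rangle^{-2}\lesssim h^2$, and \eqref{eq:HardyIneq} applied with $a=0$, $b=\bar y(v)$, $c=+\infty$ and $\mu=4/7$ — its constant being uniform in $b$ — gives $\int_B h^2\,dy\le\delta^{\theta\mu}\int_0^\infty|y-\bar y(v)|^{-\mu}h^2\,dy\lesssim\delta^{\theta\mu}\int_0^\infty\bigl((\partial_y h)^2+h^2\bigr)\,dy$; since $\theta\mu=\tfrac{7}{36}\cdot\tfrac47=\tfrac19$, and using $\widecheck\omega\simeq\omega$, this yields $\delta^{-1/9}\int_B f^2\omega^2\langle y\rangle^{-2}\,dy\lesssim\int_0^\infty\bigl((\partial_y(f\widecheck\omega))^2+f^2\omega^2\bigr)\,dy$. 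Adding the two regions, integrating in $v\in(0,v_F)$, multiplying by $\varphi^2$ and integrating in $t$, one arrives at
$$\int_0^T\varphi^2\int_\OO \frac{f^2\omega^2}{\delta^{1/9}\langle y\rangle^2}\ \lesssim\ \int_0^T\varphi^2\int_\OO\Bigl(\frac{J^2f^2\omega^2}{\delta^{1/2}\langle y\rangle^2}+(\partial_y(f\widecheck\omega))^2+f^2\omega^2\Bigr).$$

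To finish, one notes that the first two terms on the right are dominated by the left-hand side of \eqref{eq:lem:estimL2+}, while the third obeys $f^2\omega^2\le\alpha^{-1}\,\alpha\langle y\rangle f^2\omega^2$ (as $\langle y\rangle\ge1$) and is therefore dominated by it as well; Lemma~\ref{lem:estimL2+} then bounds the whole right-hand side by $\widecheck\kappa\int_0^T(\varphi\varphi'_++\varphi^2)\int_\OO f^2\omega^2$, which is the claim. The only genuinely delicate point is the one handled by the dichotomy: the weight $J^2/(\delta^{1/2}\langle y\rangle^2)$ degenerates along the zero set of $J$, which touches the $v$-boundary exactly where $\delta$ also vanishes, so one cannot simply bound $\delta^{-1/9}$ by a constant multiple of it; the factorisation of $J$ keeps this zero set inside a fixed bounded $y$-range, and $1/9$ is precisely the balanced exponent $\tfrac{\mu}{2(2+\mu)}$ at $\mu=4/7$, i.e. below the threshold $1/6$ up to which both the far-region and the near-region bookkeeping close (any exponent in $(0,1/6)$ would do, $1/9$ being a convenient safe choice).
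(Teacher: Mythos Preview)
Your proof is correct and uses the same two ingredients as the paper --- the $J^2/(\delta^{1/2}\langle y\rangle^2)$ penalisation from Lemma~\ref{lem:estimL2+} away from the zero of $J$, and the Hardy-type bound \eqref{eq:HardyIneq} near it --- but organises them more cleanly. The paper partitions $\OO$ into five subregions $\OOO_1,\dots,\OOO_5$, treating the neighbourhoods of $v=0$ and $v=v_F$ separately and making further case distinctions within each (for instance, $\OOO_3$ uses Hardy centred at $y=0$, while $\OOO_5$ uses a Young inequality followed by Hardy centred at the zero of $J$). You exploit the factorisation $J(v,y)=(v_E-v)(y-\bar y(v))$ together with the uniform lower bound $v_E-v\ge v_E-v_F>0$ and the fact that $\bar y(v)\in[0,y_F]$ stays in a fixed bounded interval to handle the whole domain at once via the single dichotomy $|y-\bar y(v)|\gtrless\delta^{7/36}$. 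This yields the same exponent $1/9$ (and, as you observe, any exponent in $(0,1/6)$) with less bookkeeping; the paper's decomposition is somewhat more explicit about each corner of $\OO$, but your argument is shorter and makes the mechanism more transparent.
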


\begin{proof}[Proof of Lemma~\ref{cor1:L2estim+}]
We define $\OO = \OOO_1 \cup \OOO_2 \cup \OOO_3 \cup \OOO_4$ with 
\bean
\OOO_1 &:=& \{ (v,y); \, 0 < v  < \tfrac12 v_F,  \  \tfrac12 v_E \le v + y_L v^{29/36}, \ y > v^{7/36} \} , 
\\
\OOO_2 &:=& \{ (v,y); \, 0 < v  < \tfrac12 v_F, \  \tfrac12 v_E > v + y_L v^{29/36}, \ y > v^{7/36} \}  , 
\\
\OOO_3 &:=&  \{ (v,y); \, 0 < v  < \tfrac12 v_F,  \ y \le v^{7/36} \}  , 
\\
\OOO_4 &:=& \{ (v,y); \,  \tfrac12 v_F \le  v  < v_F, \ 0 \le y <  \tfrac12 y_F\}  , 
\\
\OOO_5 &:=& \{ (v,y); \,  \tfrac12 v_F \le  v  < v_F, \  y >  \tfrac12 y_F\}  , 
\eean
and we estimate separately each of the term
$$
\int_0^T\int_{\OOO_i} \varphi^2 f^2 {\omega_0^2 \over \delta^{1/9}}, 
$$
for $i=1, \dots , 5$ and where we use the shorthand $\omega_0 := \omega/\langle y \rangle$. 

\smallskip\noindent
$\bullet$ On $\OOO_1$, we have $\delta \ge \delta_1 > 0$, so that 
$$
\int_0^T\int_{\OOO_1} \varphi^2 f^2 {\omega_0^2 \over \delta^{1/9}} \lesssim \int_0^T\int_{\OOO_1} \varphi^2 f^2 \omega_0^2.
$$

\smallskip\noindent
$\bullet$ On $\OOO_2$, we observe that 
\bean
J &\ge& y(v_E - v - y_L v^{29/36}) \ge \tfrac12v_E y, 
\eean
so that 
$$
{J^2 \over \delta^{1/2}} \gtrsim {y^2 \over v^{1/2}} \gtrsim {1\over v^{1/9}} 
\gtrsim {1\over \delta^{1/9}},
$$
where we have used $y >  v^{7/36}$ and next $v = \delta $ in the two last inequalities. 
We deduce 
$$
\int_0^T\int_{\OOO_2} \varphi^2 f^2 {\omega_0^2 \over \delta^{1/9}} \lesssim \int_0^T\int_{\OOO_2} \varphi^2 f^2 \omega_0^2 {J^2 \over \delta^{1/2}}. 
$$

\smallskip\noindent
$\bullet$ On $\OOO_3$, 
we have 
\bean
\int_0^T\int_{\OOO_3} \varphi^2 f^2 {\omega_0^2 \over \delta^{1/9}} 
&\lesssim& \int_0^T \!\! \int_{0}^{v_F/2} \!\! \int_0^{v_F^{7/36}}  \varphi^2 (\omega f) ^2  {1 \over y^{4/7}} 
\\
&\lesssim& \int_0^T \!\! \int_{0}^{v_F/2} \!\! \int_0^{\infty}  \varphi^2   (\partial_y  (\omega f) )^2 
\eean
where we have used that $y^{4/7} \le \delta^{1/9}$ on the first line and  the inequality \eqref{eq:HardyIneq}  in the last line. 

\smallskip\noindent
$\bullet$
On $\OOO_4$, we observe  that  there exists $v^*_F \in [\tfrac12 v_F,v_F)$ and $J_* > 0$ such that 
\bean
J^2 \ge  J^2_* > 0 \hbox{ if } (v,y) \in (v^*_F,v_F) \times (0,y_F/2), 
\eean
so that 
\bean
\frac1{\delta^{1/9}}
\le \frac1{\delta(v^*_F)^{1/9}} {\bf 1}_{v < v^*_F} + {J^2 \over J^2_*} \frac1{\delta^{1/9}} {\bf 1}_{v > v^*_F}  
  \lesssim 1 +  {J^2 \over  \delta^{1/2}}  
\eean
on $\OOO_4$. 
We deduce 
\bean
\int_0^T\int_{\OOO_4} \varphi^2 f^2 {\omega_0^2 \over \delta^{1/9}} 
&\lesssim& \int_0^T \!\! \int_{\OOO_4}  \varphi^2 f^2 \omega_0^2  \Bigl( 1+  \frac{J^2}{\delta^{1/2}} \Bigr).
\eean

\smallskip\noindent
$\bullet$ On $\OOO_5$, we first observe that 
$$
  {1 \over \delta^{1/9}} \le {J^2 \over \delta^{1/2}} + {1 \over |J|^{4/7}} , 
$$
thanks to the Young inequality, 
 and that 
$$
\int_{y_F/2}^\infty  {f^2 \over |J|^{4/7}} \omega_0^2 dy \lesssim \int_0^\infty  (\partial_y (f\omega))^2 dy 
$$
from the inequality \eqref{eq:HardyIneq} again and the fact that we may write $|J(v,y)| = \zeta |y-c|$, with $\zeta = \zeta(v) \in [\zeta_*,\zeta^*]$,  $c = c(v) \in [c_*,c^*]$, $\zeta_*,\zeta^*,c_*,c^* \in (0,\infty)$. 
We deduce 
\bean
\int_0^T\int_{\OOO_5} \varphi^2 f^2 {\omega_0^2 \over \delta^{1/9}} 
&\lesssim& \int_0^T \!\! \int_{v_F/2}^{v_F}  \int_0^{\infty}  \varphi^2   \Bigl(   (\partial_y (f\omega))^2 + f^2 \omega_0^2 \frac{J^2}{\delta^{1/2}} \Bigr).
\eean
We conclude by gathering the above five contributions and by using the previous estimate \eqref{eq:lem:estimL2+}. 
\end{proof} 

Interpolating the two previous estimates, we conclude with the following formulation of penalization of the boundary (in the $v$ variable) and the infinity (in the $y$ variable). 

\begin{prop}\label{prop:estimL2++}
Assume $\omega := e^{\alpha \langle y \rangle}$, with $\alpha > 0$.
Any   solution $f $ to the linear VCk equation  \eqref{eq:VCklinear}-\eqref{eq:LinearVCkBd}  
 satisfies 
$$
\int_0^T \varphi^2 \int_\OO  f^2 \omega^2 \Bigl[  {1  \over \delta^{1/27}}+ \langle y \rangle  \Bigr]  \lesssim
\int_0^T (\varphi \varphi'_+  + \varphi^2) \int_\OO  f^2 \omega^2
$$
for any $\varphi \in C^1_c(0,T)$.  
\end{prop}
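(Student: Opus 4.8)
The plan is to obtain the two bracketed quantities $\delta^{-1/27}$ and $\langle y\rangle$ separately and then add the two resulting inequalities, using only Lemma~\ref{lem:estimL2+} and Lemma~\ref{cor1:L2estim+} together with one elementary pointwise Young inequality. First I would record that the weight in the statement, $\omega := e^{\alpha\langle y\rangle}$, satisfies $\omega \simeq e^{\alpha y}$ on $[0,\infty)$ since $0\le\langle y\rangle-y\le 1$ there; hence the estimates \eqref{eq:lem:estimL2+} and the one in Lemma~\ref{cor1:L2estim+}, stated for the exponential weight $e^{\alpha y}$, apply verbatim to this $\omega$ up to harmless multiplicative constants. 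The $\langle y\rangle$-part is then immediate: discarding on the left-hand side of \eqref{eq:lem:estimL2+} the nonnegative contributions $f^2\omega^2 J^2\delta^{-1/2}\langle y\rangle^{-2}$ and $(\partial_y(f\widecheck\omega))^2$ leaves
\[
\alpha\int_0^T\varphi^2\int_\OO f^2\omega^2\,\langle y\rangle \;\le\; \widecheck\kappa\int_0^T(\varphi\varphi'_+ +\varphi^2)\int_\OO f^2\omega^2 .
\]

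For the $\delta^{-1/27}$-part I would interpolate the $\delta$-penalized quantity from Lemma~\ref{cor1:L2estim+}, namely the already-digested bound $\int_0^T\varphi^2\int_\OO f^2\delta^{-1/9}\omega^2\langle y\rangle^{-2}\lesssim\int_0^T(\varphi\varphi'_+ +\varphi^2)\int_\OO f^2\omega^2$, against the $\langle y\rangle$-bound just obtained. The point is the pointwise Young inequality with exponents $3$ and $3/2$: for every $(v,y)\in\OO$,
\[
\frac{1}{\delta^{1/27}} \;=\; \Bigl(\frac{1}{\delta^{1/9}\langle y\rangle^2}\Bigr)^{1/3}\langle y\rangle^{2/3} \;\le\; \frac{1}{3}\,\frac{1}{\delta^{1/9}\langle y\rangle^2}+\frac{2}{3}\,\langle y\rangle ,
\]
the exponent $1/3$ being exactly the one that forces the powers of $\langle y\rangle$ to cancel, with $(1/9)\cdot(1/3)=1/27$. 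Multiplying this inequality by $\varphi^2 f^2\omega^2$, integrating over $(0,T)\times\OO$, and applying Lemma~\ref{cor1:L2estim+} to the first resulting term and the displayed $\langle y\rangle$-bound to the second yields $\int_0^T\varphi^2\int_\OO f^2\omega^2\delta^{-1/27}\lesssim\int_0^T(\varphi\varphi'_+ +\varphi^2)\int_\OO f^2\omega^2$. Adding this to the $\langle y\rangle$-bound gives the claimed estimate.

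I do not expect a genuine obstacle here: the statement is a clean interpolation of the two preceding lemmas, and the only things to watch are (i) the comparability $e^{\alpha\langle y\rangle}\simeq e^{\alpha y}$, so that the two lemmas are usable as stated, and (ii) the bookkeeping of exponents, chosen so that the $\langle y\rangle$-powers drop out and one lands precisely on $\delta^{-1/27}$ — any smaller positive power of $1/\delta$ would work equally well, and $1/27$ is merely the convenient value propagated from the exponent $1/9$ appearing in Lemma~\ref{cor1:L2estim+}.
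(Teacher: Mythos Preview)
Your proposal is correct and matches the paper's own proof essentially line for line: the paper also invokes the pointwise Young inequality $\delta^{-1/27}=\delta^{-1/27}\langle y\rangle^{-2/3}\cdot\langle y\rangle^{2/3}\le \delta^{-1/9}\langle y\rangle^{-2}+\langle y\rangle$ and then cites Lemma~\ref{lem:estimL2+} and Lemma~\ref{cor1:L2estim+}. Your explicit remark on the comparability $e^{\alpha\langle y\rangle}\simeq e^{\alpha y}$ is a welcome clarification that the paper leaves implicit.
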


\begin{proof}[Proof of Proposition~\ref{prop:estimL2++}] The estimate immediately follows from  the Young inequality
$$
{1  \over \delta^{1/27}} = {1  \over \delta^{1/27} \langle y \rangle^{2/3} } \langle y \rangle^{2/3} \le {1  \over \delta^{1/9} \langle y \rangle^{2} } +  \langle y \rangle , 
$$
together with  Lemma~\ref{lem:estimL2+} and Lemma~\ref{cor1:L2estim+}. 
\end{proof}

 \subsection{Gain of integrability estimate}
 \label{subseq:GainL2}
 We introduce the function 
\beqn
\label{eq:def-barf}
 \bar f :=  f  \psi, \quad \psi := \varphi(t) \chi (v)   \omega_0(y) , 
\eeqn
 with $ \varphi \in C^1_c((0,T))$, $\chi \in C^1_c((0,v_F))$, $0 \le \varphi, \chi \le 1$,  $\omega_0 :=   e^{\alpha y}$ for $y \ge y_0 > 0$, $\alpha > 0$, $0 \le \omega_0 \in C^1_c((0,\infty))$, which is a solution to 
\beqn\label{eq:KFP-interior-barf}
 \partial_t \bar f + \hat J \partial_v \bar f+   \check K \partial_y \bar f - a \partial_{yy}^2 \bar f + \alpha \langle y \rangle \bar f =   F
\eeqn
 on $\R_+ \times \R^2$, with  
\bean
&&\hat J := J(v,y), \ \hbox{if} \ y > 0, \quad
\hat J  :=  J(v,-y), \ \hbox{if}  \ y < 0, 
\\
&&\check K := K(v,y), \ \hbox{if} \ y > 0, \quad
\check K :=  - K(v,-y), \ \hbox{if}  \ y < 0, 
\\
&& F :=  (\partial_t \varphi) \chi \omega_0  f +  \hat J \varphi ( \partial_v \chi) \omega_0 f  - (\partial_v  \hat J)  \varphi   \chi \omega_0 f 
 +  \check  K \varphi \chi ( \partial_y \omega_0)  f  - (\partial_y \check  K)  \varphi   \chi \omega_0 f 
 \\&&
 \qquad
- a    \varphi    \chi  (\partial_{yy}^2  \omega_0) f 
- 2 a   \varphi    \chi  (\partial_y  \omega_0)   (\partial_y  f)
+ \alpha \langle y \rangle   \varphi   \chi \omega_0 f 
\eean
  We first focus on the homogeneous equation 
\beqn\label{eq:KFP-interior-g}
 \partial_t g+ \bar J \partial_v g   + \bar K  \partial_y g  - \fraka \partial_{yy}^2 g  + \alpha \langle y \rangle g = 0,
  \quad \hbox{on}\quad \R_+ \times \R^2,
\eeqn
complemented with the initial condition 
 $$
 g(0) = g_0, \quad \hbox{on}\quad \R^2, 
 $$
 where 
 $$
 \bar K, \bar J \in C^1_c((0,v_F) \times \R), \quad \bar K = \check K, \ \bar J = \hat J \ \hbox{ on supp} \,  \chi \times \R. 
 $$
 More precisely, we take $ \bar K =  \bar \chi \check K$, $ \bar J =  \bar \chi \hat J$, $ \bar \chi \in C^1_c((0,v_F))$, ${\bf 1}_{\hbox{\rm supp} \chi} \le \bar\chi \le 1$, 
 and $\alpha > \| \bar\chi' \| (v_E+y_Lv_F+1)$. 
We establish the following series of growth estimates and gain of integrability result.

\begin{lem}\label{lem:LpInterior}  
For any $p \in [1,\infty]$, any solution $g$ to the homogeneous equation  \eqref{eq:KFP-interior-g} satisfies 
\beqn\label{eq:propLpInteriorg}
\| g_t \|_{L^p} \le  \|g_0 \|_{L^p}, \quad \forall \, t \ge 0.
\eeqn
\end{lem}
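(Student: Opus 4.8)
The plan is to prove the $L^p$ contraction by a direct energy computation for $p\in[1,\infty)$, then pass to the limit $p\to\infty$ exactly as in the proof of Lemma~\ref{lem:GrowthLp}. Since equation \eqref{eq:KFP-interior-g} is (formally) positivity preserving, I would first reduce to $g\ge 0$; for the $L^1$-to-$L^p$ structure one can also just work with $|g|^p$ using the renormalization $\beta(s)=|s|^p$. Multiplying \eqref{eq:KFP-interior-g} by $g^{p-1}$ (or $|g|^{p-1}\operatorname{sgn} g$) and integrating over $\R^2$ gives
\[
\frac1p\frac{d}{dt}\int_{\R^2} |g|^p
= -\int_{\R^2}\bar J\,\partial_v g\,|g|^{p-1}\operatorname{sgn}g
 -\int_{\R^2}\bar K\,\partial_y g\,|g|^{p-1}\operatorname{sgn}g
 +\int_{\R^2}\fraka\,\partial_{yy}^2 g\,|g|^{p-1}\operatorname{sgn}g
 -\alpha\int_{\R^2}\langle y\rangle |g|^p .
\]
The first two terms are rewritten as $-\frac1p\int \bar J\,\partial_v(|g|^p)$ and $-\frac1p\int \bar K\,\partial_y(|g|^p)$; since $g$ is compactly supported in $v$ (because $\bar J,\bar K$ are, and the transport keeps the support inside a fixed strip — more simply, since $g_0$ and the coefficients have the relevant support properties, or one argues on the whole plane where there are genuinely no boundary terms), integrating by parts produces $+\frac1p\int (\partial_v\bar J+\partial_y\bar K)|g|^p$ with no boundary contribution. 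The diffusion term, after one integration by parts, is $-(p-1)\int \fraka\,|g|^{p-2}(\partial_y g)^2 = -\frac{4(p-1)}{p^2}\int \fraka\,(\partial_y |g|^{p/2})^2 \le 0$.

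Putting this together,
\[
\frac1p\frac{d}{dt}\int_{\R^2}|g|^p
\le \int_{\R^2}|g|^p\Bigl(\tfrac1p(\partial_v\bar J+\partial_y\bar K)-\alpha\langle y\rangle\Bigr).
\]
The key point is that the coefficient on the right is $\le 0$: this is exactly why the drift term $+\alpha\langle y\rangle g$ was built into \eqref{eq:KFP-interior-g} and why $\alpha$ was chosen large, namely $\alpha > \|\bar\chi'\|(v_E+y_Lv_F+1)$. Indeed $\partial_v\bar J = \bar\chi'\hat J+\bar\chi\,\partial_v\hat J$ and $\partial_y\bar K=\bar\chi'\check K$; on the support of $\bar\chi$ one has $|\hat J|\lesssim (v_E+y_Lv_F)\langle y\rangle$ and $|\partial_v\hat J|,|\check K|\lesssim \langle y\rangle$ uniformly, so $\tfrac1p(\partial_v\bar J+\partial_y\bar K)\le \alpha\langle y\rangle$ for all $p\ge 1$ by the choice of $\alpha$. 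Hence $\frac{d}{dt}\int|g|^p\le 0$, which gives \eqref{eq:propLpInteriorg} for $p\in[1,\infty)$; the case $p=\infty$ follows by letting $p\to\infty$ in $\|g_t\|_{L^p}\le\|g_0\|_{L^p}$, or directly by a maximum-principle argument on \eqref{eq:KFP-interior-g}.

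The main obstacle, and the only genuinely nontrivial point, is justifying the computation rigorously: the integrations by parts require enough integrability and smoothness of $g$, and one must make sure there are truly no boundary terms. Since \eqref{eq:KFP-interior-g} is posed on all of $\R^2$ with compactly supported (in $v$) coefficients, this is handled by the same renormalization/approximation machinery as in the proof of Theorem~\ref{theo:existL2} (renormalize with $\beta_n\to|s|^p$, regularize, use the DiPerna--Lions commutator lemma, pass to the limit), so no new difficulty arises beyond bookkeeping. The verification of the sign of the zeroth-order coefficient via the explicit bound on $\alpha$ is the substantive step, but it is a short computation.
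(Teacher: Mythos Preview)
Your proof is correct and follows essentially the same route as the paper's: multiply by $g^{p-1}$, integrate by parts, and use the choice of $\alpha$ to make the zeroth-order coefficient nonpositive, then pass to the limit $p\to\infty$. One small slip: since $\bar\chi=\bar\chi(v)$ depends only on $v$, you have $\partial_y\bar K=\bar\chi\,\partial_y\check K=-\bar\chi$, not $\bar\chi'\check K$; this is even easier to bound and does not affect your conclusion.
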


\smallskip\noindent
{\sl Proof of Lemma~\ref{lem:LpInterior}.} For $p \in  (1,\infty)$, any nonnegative solution $g$ to \eqref{eq:KFP-interior-g} satisfies 
\bean
\frac1p\frac{d}{dt} \int_{\R^2} g^p 
=  \frac1p  \int_{\R^2} g^p ( \partial_v \bar J + \partial_y \bar K - p \alpha \langle y \rangle) - (p-1) a \int_{\R^2} (\partial_y g)^2 g^{p-2}
\le 0, 
\eean
because $\partial_v \bar J + \partial_y \bar K \le \bar\chi' \check J \le \alpha \langle y \rangle$. 
We conclude to \eqref{eq:propLpInteriorg} thanks to the Gronwall lemma when $p \in (1,\infty)$ and by passing to the limit $p \to 1$ and $p \to \infty$ in  \eqref{eq:propLpInteriorg} for the two limit cases.  
\qed

\begin{prop}\label{prop:L2H1Interior} 
Any solution $g$ to the homogeneous equation  \eqref{eq:KFP-interior-g} satisfies  
\beqn\label{eq:propL2H1Interiorg}
\| g_t \|_{H^1_vL^2_y \cap L^2_vH^2_y} \lesssim  {1 \over t^2}  \|g_0 \|_{L^2}, \quad \forall \, t \in (0,1). 
\eeqn

\end{prop}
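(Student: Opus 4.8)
The statement is a hypoelliptic smoothing estimate for the degenerate parabolic equation \eqref{eq:KFP-interior-g}, in which $y$ plays the role of the (diffused) ``velocity'' variable and $v$ the role of the (merely transported) ``position'' variable, the $v$-transport being $\bar J\partial_v$. The plan is to reproduce H\'erau's modified (``twisted'') energy method \cite{MR2294477} adapted to the variable coefficients $\bar J,\bar K$ and to the confinement term $\alpha\langle y\rangle$. First I would record, from the $p=2$ case of the proof of Lemma~\ref{lem:LpInterior}, not only $\|g_t\|_{L^2}\le\|g_0\|_{L^2}$ but the attached dissipation bound
\[
a_*\int_0^t\|\partial_y g_s\|_{L^2}^2\,ds+\alpha\int_0^t\!\!\int_{\R^2}\langle y\rangle\,g_s^2\,dyds\;\lesssim\;\|g_0\|_{L^2}^2 .
\]
Here the hypothesis $\alpha>\|\bar\chi'\|_{L^\infty}(v_E+y_Lv_F+1)$ is precisely what forces $\partial_v\bar J+\partial_y\bar K\le\alpha\langle y\rangle$, so that the linear growth in $y$ of $\bar J,\bar K$ and of their $v$-derivatives is absorbed by the confinement in each of the energy estimates below; this is what keeps the constants depending only on the fixed data.

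Next I would differentiate \eqref{eq:KFP-interior-g} in $y$ and in $v$. Writing $L:=\bar J\partial_v+\bar K\partial_y-\fraka\partial^2_{yy}+\alpha\langle y\rangle$, one gets $\partial_t(\partial_v g)+L(\partial_v g)=-(\partial_v\bar J)\partial_v g-(\partial_v\bar K)\partial_y g$ and $\partial_t(\partial_y g)+L(\partial_y g)=-(\partial_y\bar J)\partial_v g-(\partial_y\bar K)\partial_y g-\alpha(\partial_y\langle y\rangle)g$, the crucial point being that the bracket $[\partial_y,L]$ feeds a copy of $\partial_v g$ weighted by $\partial_y\bar J=\bar\chi(v)(v_E-v)\ge0$, which is bounded below by $v_E-v_F>0$ on $\mathrm{supp}\,\chi$: this is the Hörmander bracket transferring regularity from $y$ to $v$. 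I then form the twisted functional
\[
\mathcal F(t):=\|g_t\|_{L^2}^2+A\,t\,\|\partial_y g_t\|_{L^2}^2+2B\,t^2\la\partial_y g_t,\partial_v g_t\ra_{L^2}+C\,t^3\|\partial_v g_t\|_{L^2}^2,
\]
with $0<C\ll B\ll A\ll 1$ to be fixed. Differentiating $\mathcal F$ along the flow: the diffusion in the second term yields a good $-\,t\,\|\partial^2_{yy}g\|^2$ contribution and, at leading order, also controls $A\|\partial_y g\|^2$; the cross term is the engine, since differentiating it produces through the $-(\partial_y\bar J)\partial_v g$ piece a term $\simeq -B\,t^2\!\int(\partial_y\bar J)(\partial_v g)^2$, which dominates the unfavourable $+3C\,t^2\|\partial_v g\|^2$ coming from the last term once $C\ll B$; the remaining terms are either dissipative, or of lower order, or absorbed via $\alpha\langle y\rangle$. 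This gives $\tfrac{d}{dt}\mathcal F\le\kappa\,\mathcal F$ on $(0,1)$, hence $\mathcal F(t)\lesssim\|g_0\|_{L^2}^2$ (proving it first for smooth $g_0$, where $\mathcal F(0)=\|g_0\|_{L^2}^2$, then passing to general $g_0\in L^2$ by density), so that $\|\partial_y g_t\|_{L^2}\lesssim t^{-1/2}\|g_0\|_{L^2}$ and $\|\partial_v g_t\|_{L^2}\lesssim t^{-3/2}\|g_0\|_{L^2}$, both $\le t^{-2}\|g_0\|_{L^2}$ on $(0,1)$.

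For the second $y$-derivative I would iterate once more: $h:=\partial_y g$ solves $\partial_t h+Lh=R$ with $R=-(\partial_y\bar J)\partial_v g-(\partial_y\bar K)\partial_y g-\alpha(\partial_y\langle y\rangle)g$ now controlled in $L^2$ by the first-order bounds, $\|R_t\|_{L^2}\lesssim t^{-3/2}\|g_0\|_{L^2}$; the weighted energy estimate for $h$ gives $\int_0^t s^2\|\partial^2_{yy}g_s\|_{L^2}^2\,ds\lesssim\|g_0\|_{L^2}^2$, and a further short-time iteration (or, equivalently, enlarging $\mathcal F$ by a $t^2\|\partial^2_{yy}g\|_{L^2}^2$ term together with the attendant cross terms, as in H\'erau's scheme) upgrades this to $\|\partial^2_{yy}g_t\|_{L^2}\lesssim t^{-1}\|g_0\|_{L^2}\le t^{-2}\|g_0\|_{L^2}$. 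Collecting the three pieces yields \eqref{eq:propL2H1Interiorg}; the crude exponent $t^{-2}$, rather than the sharp $t^{-3/2}$, leaves ample room for all constants.

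The step I expect to be the main obstacle is making H\'erau's commutator argument close in spite of the degeneracies of this particular equation: the good coefficient $\partial_y\bar J=\bar\chi(v)(v_E-v)$ vanishes where $\bar\chi$ does and degenerates near $\partial(\mathrm{supp}\,\bar\chi)$, so the $y\to v$ transfer of regularity is only genuine on a compact subset of the interior of $\mathrm{supp}\,\bar\chi$; this is reconciled by the finite-speed structure (the $v$-velocity $\bar\chi\hat J$ vanishes on $\partial(\mathrm{supp}\,\bar\chi)$, so the $v$-support of $g_t$ cannot escape $\mathrm{supp}\,\bar\chi$), but since $\hat J$ is unbounded in $y$ this propagation is not uniform, and one must localise the $\|\partial_v g\|^2$ terms in $\mathcal F$ (weighting them by $\bar\chi$, or a power of it, respectively by $\partial_y\bar J$) and exploit the confinement to control the large-$y$ region — exactly in the spirit of the twisted-weight devices ($\varpi$, $\widetilde\omega$, $\widecheck\omega$) already used in Sections~\ref{sec:GrowthEstim}--\ref{sec:ultra}. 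The bookkeeping of these localised cross terms, together with the unbounded-in-$y$ coefficients, is the part demanding care.
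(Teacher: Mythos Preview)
Your approach is essentially the paper's: the H\'erau twisted-energy functional. The paper does not split the argument into a first-order step followed by an iteration for $\partial^2_{yy}g$; it builds a single five-term functional
\[
\FF(t,g)=\eps^{-3}\|g\|_{L^2}^2+\eps t^2\|\partial_y g\|_{L^2}^2+\eps^3 t^3(\partial_v g,\partial_y g)_{L^2}+\eps^4 t^4\|\partial_v g\|_{L^2}^2+\eps^4 t^4\|\partial^2_{yy}g\|_{L^2}^2,
\]
with the $\|\partial^2_{yy}g\|^2$ term included from the start (it is needed to absorb the $\|\partial_{vy}g\|\,\|\partial_{yy}g\|$ cross term coming from the mixed inner product). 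The time weights are $t^2,t^3,t^4,t^4$ rather than your $t,t^2,t^3$, together with the $\eps$-hierarchy; this is what produces directly the $t^{-2}$ rate for all four norms without a second pass. Your two-stage variant would also work and gives sharper intermediate exponents, but the one-shot functional is cleaner here since the $\partial^2_{yy}$ bound is needed anyway for the Sobolev embedding in the next corollary.

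On your anticipated main obstacle: the paper does not perform any of the $\bar\chi$- or $\partial_y\bar J$-weighted localisations you sketch. It records the cross-term identity and simply bounds the commutator contribution by $-C_{21}\|\partial_v g\|_{L^2}^2$ (with $C_{21}>0$), then closes $\tfrac{d}{dt}\FF\le 0$ by choosing $\eps$ small. In other words, the degeneracy you flag (that $\partial_y\bar J=\bar\chi(v)\,\partial_y\hat J$ is not uniformly bounded below on $\R^2$) is not treated as an obstruction in the paper's argument; the proof proceeds as if the coefficient were uniformly positive. So you are anticipating a difficulty that the paper's own proof does not address --- your proposed localisation may well be needed to make the $-C_{21}\|\partial_v g\|^2$ step fully rigorous, but it is not part of the paper's presentation.
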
 

\smallskip\noindent
{\sl Proof of Proposition~\ref{prop:L2H1Interior}.} We use a regularization argument in the spirit of  
H{\'e}rau~\cite{MR2294477} (see also Villani~\cite[Appendix]{MR2562709}). More precisely, we define 
$$
\FF(t,g) :=    \eps^{-3}
 \| g \|^2_{L^2} + \eps^4 t^4 \|\partial_v g \|_{L^2}^2 + \eps^3 t^3(\partial_v g,\partial_y g)_{L^2}  + \eps  t^2  \|\partial_y g \|_{L^2}^2   + 
\eps^4 t^4 \|\partial^2_{yy} g \|_{L^2}^2, 
$$
for $t,\eps \in (0,1)$, 
and for a  solution $g$ to the homogeneous equation  \eqref{eq:KFP-interior-g}, we estimate the derivative quantity $\tfrac{d}{dt} \FF(t,g_t)$. 
For that purpose, we start computing each square of norm or scalar product on $g$ involved separately.

\smallskip\noindent
{\sl Step 1. } From Lemma~\ref{lem:LpInterior}, we already know that 
\bean
\frac12 {d \over dt} \| g \|^2_{L^2} \le   -  \| g  \langle y \rangle^{1/2} \|^2_{L^2}  - a \| \partial_y g \|^2_{L^2}.
\eean
Similarly, we compute 
\bean
  \frac12{d \over dt}   \int_{\R^2} (\partial_v g)^2
=   \int_{\R^2} (\partial_v g)^2 \Bigl( \frac12\partial_y \bar K -  \frac12 \partial_v \bar J -  \alpha \langle y \rangle \Bigr) 
  -  a \int_{\R^2} (\partial_{vy}^2 g)^2 , 
\eean
so that 
\bean
\frac12 {d \over dt} \| \partial_v g \|^2_{L^2} \le C_1   \|  \partial_v g  \|^2_{L^2} -   \| \langle y \rangle^{1/2} \partial_v g  \|^2_{L^2}  - a \| \partial_{vy}^2 g \|^2_{L^2}.
\eean
We next compute 
\bean
 {d \over dt}   \int_{\R^2} \partial_v g \partial_y g 
&=&   \int_{\R^2} \Bigl(  - \frac12 \partial_v \bar J (\partial_y g)^2  - {\frac 12} \partial_y \bar J (\partial_v g)^2   - 2  a  \partial_{vy}^2 g \partial_{yy}^2 g - 2 \alpha \langle y \rangle   \partial_v g  \partial_y g  \Bigr),  
\eean
so that 
\bean
{d \over dt}   \int_{\R^2} \partial_v g \partial_y g 
&\le&  - C_{21} \|  \partial_v g \|^2_{L^2} +  C_{22} \| \langle y \rangle^{1/2} \partial_y g \|^2_{L^2} 
\\
&&+
C_{23}   \|  \partial_{vy} g  \|_{L^2}\|  \partial_{yy} g  \|_{L^2} + C_{24}   \|   \langle y \rangle^{1/2} \partial_v g  \|_{L^2}\|   \langle y \rangle^{1/2} \partial_y g  \|_{L^2} .
\eean
We also compute 
\bean
\frac12{d \over dt}   \int_{\R^2} (\partial_y g)^2
&=&  \int_{\R^2} (\partial_y g)^2 \Bigl( \frac12 \partial_v \bar J - \frac12\partial_y \bar K -   \alpha \langle y \rangle \Bigr)  
\\&&+  \int_{\R^2} g^2 \frac\alpha2 \partial^2_{yy} \langle y\rangle
-   \int_{\R^2} \partial_{y} \bar J \partial_{y} g \partial_{v} g -  a \int_{\R^2} (\partial_{yy}^2 g)^2 , 
\eean
so that 
\bean
\frac12 {d \over dt} \| \partial_y g \|^2_{L^2} &\le& -   \|  \langle y \rangle^{1/2} \partial_y g  \|^2_{L^2} - a \| \partial_{yy}^2 g \|^2_{L^2} 
\\
&&+ C_{31}   \|  \partial_y g  \|^2_{L^2}  +
C_{32}   \|   g  \|^2_{L^2}  + C_{33}   \|    \partial_v g  \|_{L^2}\|   \partial_y g  \|_{L^2}  .
\eean
We finally  compute 
\bean
  \frac12{d \over dt}   \int_{\R^2} (\partial_{yy} g)^2
&=&   
 \int_{\R^2} (\partial_{yy} g)^2 \Bigl( \frac12 \partial_v \bar J - \frac32\partial_y \bar K -   \alpha \langle y \rangle \Bigr) 
 +  \int_{\R^2} (\partial_{y} g)^2   2 \alpha  \partial^2_{yy} \langle y \rangle
 \\
 && -  \int_{\R^2} g^2  \frac \alpha 2  \partial^4_{yyyy} \langle y \rangle
  -  a \int_{\R^2} (\partial_{yyy}^2 g)^2 -  2 \int_{\R^2} \partial_y \bar J  \partial_{vy} g  \partial_{yy} g , 
\eean
so that 
\bean
\frac12 {d \over dt} \| \partial_{yy} g \|^2_{L^2} 
&\le&  
+ C_{41}   \|  \partial_{yy}^2 g \|^2_{L^2} 
 + C_{42}   \|  \partial_{y} g \|^2_{L^2} + C_{43}   \|   g \|^2_{L^2}
+   C_{44}   \|    \partial_{vy} g   \|_{L^2}\|   \partial_{yy} g  \|_{L^2}  .
\eean

\smallskip\noindent
{\sl Step 2. } 
The derivative of $\FF$ takes  the form
\bean
    {d \over dt}\FF(t,g_t) &=&
  \eps^{-3} 
     {d \over dt} \| g \|^2_{L^2}
+4  \eps^4t^{3} \| \partial_v g \|^2_{L^2}+ \eps^4t^{4}{d \over dt} \| \partial_v g \|^2_{L^2}
    \\
    &&+3 \eps^3t^2  (\partial_v g,\partial_y g)_{L^2} +\eps^3 t^{3}{d \over dt}   (\partial_v g,\partial_y g)_{L^2}
     \\
    &&+ 2 \eps  t  \| \partial_y g \|^2_{L^2} +\eps t^{2}{d \over dt} \| \partial_y g \|^2_{L^2}
  \\
    &&+ 4 \eps^4 t^{3} \| \partial_{yy} g \|^2_{L^2} +\eps^4t^{4}{d \over dt} \| \partial_{yy} g \|^2_{L^2}, 
\eean
for any $\eps, t \in (0,1)$. 
Using the estimates established in the first step and the Young inequalities
\bean
&&( \eps^3 t^3  C_{23}   + 
2 \eps^4t^{4}  C_{44} )   \|     \partial_{vy} g   \|_{L^2}\|  \partial_{yy} g  \|_{L^2}
\\
&&\qquad\le 
( \eps^{3/2} t^2  C_{23}   +  \eps^3t^{4}  C_{44} )  \|  \partial_{yy} g  \|_{L^2}^2 
+ ( \eps^{9/2} t^4  C_{23}   +  \eps^5t^{4}  C_{44} )   \|     \partial_{vy} g   \|_{L^2}^2, 
\\
 &&  \bigl(    2 \eps t^{2}  C_{33}  + 3 \eps^3t^2  \bigr) \| \partial_v g  \|_{L^2} \|  \partial_y g  \|_{L^2} 
 \\
 &&\qquad \le  \bigl(   \eps^{4}t^{4}  C_{33}  + 2 \eps^4t^4  \bigr) \| \partial_v g  \|_{L^2}^2
 +  \bigl(   \eps^{-2}   C_{33}  +  2 \eps^2  \bigr) \|  \partial_y g  \|_{L^2}^2
\\
 && \eps^3 t^{3}     C_{24}   \|   \langle y \rangle^{1/2} \partial_v g  \|_{L^2}\|   \langle y \rangle^{1/2} \partial_y g  \|_{L^2}
 \\
 &&\qquad \le \eps^{3/2} t^{2}     C_{24}   \|   \langle y \rangle^{1/2} \partial_y g  \|_{L^2}^2+ \eps^{9/2} t^{4}     C_{24}  \|   \langle y \rangle^{1/2} \partial_y g  \|_{L^2}^2, 
 \eean
we immediately deduce 
\bean
{d \over dt}\FF(t,g_t) &\le&
 \| g  \langle y \rangle^{1/2} \|^2_{L^2} (-2\eps^{-3} + 2 \eps C_{32} t^2 + 2 \eps^4 C_{43} t^4 ) 
 \\
 &&
 +  \| \partial_y g \|^2_{L^2}  (-2a \eps^{-3} + 2 \eps C_{31} t^2 + 2 \eps^4 C_{42} t^4 + 2 \eps  t +  \eps^{-2}   C_{33}  +  2 \eps^2)
\\
&&
+  \| \langle y \rangle^{1/2} \partial_y g \|^2_{L^2} (  \eps^{3/2} t^{2}     C_{24}  +  \eps^3 t^3 C_{22} - 2 \eps t^2 ) 
\\
&&
+  \|  \partial_v g \|^2_{L^2} ( 4  \eps^4t^{3} +  2 \eps^4t^{4}   C_1 +  \eps^{4}t^{4}  C_{33}  + 2 \eps^4t^4 - C_{21} \eps^3 t^{3} ) 
\\
&&
  +  \|  \partial_{yy} g \|^2_{L^2} ( -2 \eps a t^{2}  + 4 \eps^4 t^{3}+ 2 \eps^4t^{4}C_{41} + \eps^{3/2} t^2  C_{23}   +  \eps^3t^{4}  C_{44} ) 
\\
&&
+ ( \eps^{9/2} t^4  C_{23}   +  \eps^5t^{4}  C_{44}  - 2 \eps^4 t^4 a )  \|  \partial_{vy} g \|^2_{L^2}
\\
&&
+ (\eps^{9/2} t^{4}     C_{24}  -      2 \eps^4t^{4} )  \| \langle y \rangle^{1/2} \partial_v g  \|^2_{L^2} . 
\eean
Choosing $\eps \in (0,1)$ small enough, all the terms are nonnegative separately, and we thus obtain 
\bean
\FF(t,g_t) \le \FF(0,g_0) = \eps^{-3} \| g_0 \|_{L^2}^2 , \quad \forall t \in  (0,1). 
\eean
Observing that 
$$
\FF(t,g) \ge \tfrac12 \eps^4 t^{4} \bigl (  \| g \|^2_{L^2} +  \|\partial_v g \|_{L^2}^2 +    \|\partial_y g \|_{L^2}^2   +  \|\partial^2_{yy} g \|_{L^2}^2) , 
$$
we then conclude to \eqref{eq:propL2H1Interiorg} with $C_1 := (2\eps^{-7})^{1/2}$. 
\qed

\begin{cor}\label{cor:L2LinftyInterior}
 Any solution $g$ to the homogeneous equation  \eqref{eq:KFP-interior-g} satisfies 
$$
\| g_t \|_{L^\infty} \lesssim {1  \over t^{2}}  \|g_0 \|_{L^2}, \quad \forall \, t \in (0,1).
$$
\end{cor}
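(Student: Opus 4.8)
The plan is to deduce the $L^2 \to L^\infty$ estimate from the gain of regularity already obtained in Proposition~\ref{prop:L2H1Interior} together with the anisotropic Sobolev embedding
$$
H^1_v L^2_y \cap L^2_v H^2_y \hookrightarrow L^\infty(\R^2).
$$
The point is that, although we do not control the full $H^2(\R^2)$ norm of $g$, the combination of one derivative in $v$ and two derivatives in $y$ already suffices to embed into $L^\infty$ in dimension two.

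First I would record this embedding. For $g \in H^1_v L^2_y \cap L^2_v H^2_y$ one has $g,\ \partial_v g,\ \partial^2_{yy} g \in L^2(\R^2)$, so, denoting by $\widehat g(\eta,\zeta)$ the Fourier transform of $g$ in the variables $(v,y)$ (with $\eta$ dual to $v$ and $\zeta$ dual to $y$), the Plancherel identity gives
$$
\int_{\R^2} (1 + \eta^2 + \zeta^4)\,|\widehat g(\eta,\zeta)|^2\, d\eta\, d\zeta \lesssim \|g\|^2_{H^1_v L^2_y \cap L^2_v H^2_y}.
$$
On the other hand the multiplier $(1+\eta^2+\zeta^4)^{-1}$ is integrable on $\R^2$: integrating first in $\eta$ yields $\int_\R (1+\zeta^4+\eta^2)^{-1}\,d\eta = \pi (1+\zeta^4)^{-1/2}$, and $\zeta \mapsto (1+\zeta^4)^{-1/2}$ is integrable on $\R$ since it behaves like $\zeta^{-2}$ at infinity. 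The Cauchy--Schwarz inequality then gives $\|\widehat g\|_{L^1(\R^2)} \lesssim \|g\|_{H^1_v L^2_y \cap L^2_v H^2_y}$, and Fourier inversion yields $\|g\|_{L^\infty(\R^2)} \lesssim \|g\|_{H^1_v L^2_y \cap L^2_v H^2_y}$. Applying this to $g_t$ and invoking Proposition~\ref{prop:L2H1Interior} gives, for any $t \in (0,1)$,
$$
\|g_t\|_{L^\infty} \lesssim \|g_t\|_{H^1_v L^2_y \cap L^2_v H^2_y} \lesssim \frac1{t^2}\,\|g_0\|_{L^2},
$$
which is the claim.

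I do not expect any genuine obstacle here: the only step requiring a small check is the integrability of the anisotropic weight $(1+\eta^2+\zeta^4)^{-1}$ over $\R^2$, which is precisely where the asymmetry ``one $v$-derivative, two $y$-derivatives'' of Proposition~\ref{prop:L2H1Interior} is used. If one prefers to avoid the Fourier side, the same embedding can be obtained from Gagliardo--Nirenberg / Agmon-type interpolation inequalities on $\R^2$, but the multiplier computation above is the most transparent.
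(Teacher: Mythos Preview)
Your proof is correct and follows essentially the same route as the paper: both establish the anisotropic embedding $H^1_vL^2_y \cap L^2_vH^2_y \hookrightarrow L^\infty(\R^2)$ via the Fourier-side Cauchy--Schwarz argument with the weight $1+\eta^2+\zeta^4$, check that this weight has integrable inverse on $\R^2$, and then invoke Proposition~\ref{prop:L2H1Interior}. The only cosmetic difference is the naming of the dual variables.
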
 

\begin{proof}[Proof of Corollary~\ref{cor:L2LinftyInterior}]
We first observe that the following Sobolev type embedding 
$$
H^1_vL^2_y \cap L^2_vH^2_y  \subset L^\infty(\R^2)
$$
 holds true. 
We may indeed write
$$
\| f \|_{L^\infty} \lesssim \| \hat f \|_{L^1} \lesssim A B
$$
using the Cauchy-Schwarz inequality in the second inequality, with 
\bean
A^2 &:=&  \int_{\R^2} (1+|\xi|^2 +|\eta|^4) |\hat f |^2 d\xi d\eta
\\
 &=&  \int_{\R^2} ( |\hat f |^2 + |\widehat{\partial_v f}|^2  +  |\widehat{\partial^2_{yy} f}|^2 ) d\xi d\eta \le \| f \|_{H^1_vL^2_y \cap L^2_vH^2_y}^2, 
\eean
where we have used the Plancherel identity in the last inequality,
and
\bean
B^2 &:=&  \int_{\R^2} { 1 \over 1 +|\xi|^2 +|\eta|^4}    d\xi d\eta
=   \int_{\R} {  d\eta \over (1   +|\eta|^4)^{1/2}}    \int_{\R} { dz \over 1 + z^2}  < \infty, 
\eean
by performing the change of variables $z := \xi/(1   +|\eta|^4)^{1/2}$. 
We then conclude the proof thanks to  \eqref{eq:propL2H1Interiorg}. \end{proof}

\smallskip
We come back to the analysis of   the linear VCk equation  \eqref{eq:VCklinear}-\eqref{eq:LinearVCkBd}.

\begin{prop}\label{prop:L2LpInterior-barf} 
Assume $\omega := e^{\alpha y}$, $\alpha > 0$.
For any solution $f $ to the linear VCk equation  \eqref{eq:VCklinear}-\eqref{eq:LinearVCkBd},  
the  function $\bar f$ defined by \eqref{eq:def-barf}  satisfies 
\beqn\label{eq-prop:L2LpInterior-barf}
\int_0^T \| \bar f \|^2_{L^{5/2}}  dt \lesssim \|  \chi \|_{W^{1,\infty}}^2 \int_0^T \|   f    \omega\|^2_{L^2} (\varphi^2 +  (\varphi')^2)dt , 
\eeqn
for any $T \in (0,1)$, $\varphi \in C^1_c(0,T)$ and $\chi \in C^1_c(0,v_F)$. 
\end{prop}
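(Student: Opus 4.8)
The plan is to run a Duhamel argument against the homogeneous interior flow. Since $\chi\in C^1_c((0,v_F))$, the function $\bar f=f\psi$ of \eqref{eq:def-barf} is supported, in the $v$ variable, in $\mathrm{supp}\,\chi$, where $\hat J=\bar J$ and $\check K=\bar K$; so \eqref{eq:KFP-interior-barf} exhibits $\bar f$ as a solution of the homogeneous interior equation \eqref{eq:KFP-interior-g} with source $F$ and vanishing initial datum (because $\varphi(0)=0$). Denoting by $(S(\tau))_{\tau\ge0}$ the flow of \eqref{eq:KFP-interior-g}, this means $\bar f(t)=\int_0^tS(t-s)F(s)\,ds$, and the estimate will reduce to a regularising bound for $S$ and a weighted $L^2$ bound for $F$.

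For $S$, I would first interpolate the $L^2$ contraction of Lemma~\ref{lem:LpInterior} with the ultracontractivity $\|S(\tau)h\|_{L^\infty}\lesssim\tau^{-2}\|h\|_{L^2}$ of Corollary~\ref{cor:L2LinftyInterior}, which gives $\|S(\tau)h\|_{L^{5/2}}\lesssim\tau^{-2/5}\|h\|_{L^2}$ for $\tau\in(0,1)$. I would then exploit that, $\alpha$ being large, the confining term $\alpha\langle y\rangle g$ in \eqref{eq:KFP-interior-g} makes the flow gain half a power of $\langle y\rangle$: evaluating $\tfrac{d}{dt}\int\langle y\rangle h^2$ along the (again confined) adjoint equation and invoking $\int\langle y\rangle^2h^2\ge(\int\langle y\rangle h^2)^2/\|h_0\|_{L^2}^2$ produces a Riccati-type differential inequality, hence $\|\langle y\rangle^{1/2}S^*(\tau)g\|_{L^2}\lesssim\tau^{-1/2}\|g\|_{L^2}$, i.e. $\|S(\tau)h\|_{L^2}\lesssim\tau^{-1/2}\|\langle y\rangle^{-1/2}h\|_{L^2}$ by duality. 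Composing $S(\tau)=S(\tfrac\tau2)S(\tfrac\tau2)$ delivers the bound I need, $\|S(\tau)h\|_{L^{5/2}}\lesssim\tau^{-9/10}\|\langle y\rangle^{-1/2}h\|_{L^2}$, whose kernel exponent $9/10$ is $<1$.

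Next I would bound $\|\langle y\rangle^{-1/2}F(s)\|_{L^2}$. Inspecting $F$, it splits into terms whose coefficients are bounded in $y$ — the $\partial_t\varphi$ term, the $\partial_{yy}^2\omega_0$ term, the $(\partial_y\omega_0)\,\partial_yf$ term (for which one writes $\omega_0\partial_yf=\partial_y(f\widecheck\omega)-(\partial_y\widecheck\omega)f+\dots$), and what is left of $\check K\,\partial_y\omega_0$ and $\alpha\langle y\rangle\bar f$ once their leading linear parts are accounted for — plus terms growing linearly in $y$: the transport-divergence term $-(\partial_v\hat J)\bar f\sim y\bar f$, the $v$-cutoff term $\hat J\,\varphi(\partial_v\chi)\,\omega_0f$, and at worst a further $O(y)\bar f$. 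After multiplication by $\langle y\rangle^{-1/2}$ every term is $\lesssim\|\chi\|_{W^{1,\infty}}(|\varphi|+|\varphi'|)\big(\langle y\rangle^{1/2}\omega|f|+\omega|f|+|\partial_y(f\widecheck\omega)|\big)$, and $\int_0^T\varphi^2\int_\OO\big(\langle y\rangle\,\omega^2f^2+\omega^2f^2+(\partial_y(f\widecheck\omega))^2\big)$ is controlled by $\int_0^T(\varphi^2+(\varphi')^2)\|f\omega\|_{L^2}^2$ thanks to Lemma~\ref{lem:estimL2+} and Proposition~\ref{prop:estimL2++}. Hence $\int_0^T\|\langle y\rangle^{-1/2}F(s)\|_{L^2}^2\,ds\lesssim\|\chi\|_{W^{1,\infty}}^2\int_0^T(\varphi^2+(\varphi')^2)\|f\omega\|_{L^2}^2\,ds$.

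Combining the last two steps, $\|\bar f(t)\|_{L^{5/2}}\lesssim\int_0^t(t-s)^{-9/10}\|\langle y\rangle^{-1/2}F(s)\|_{L^2}\,ds$; since $\tau\mapsto\tau^{-9/10}\mathbf 1_{(0,T)}(\tau)\in L^1$ for $T<1$, Young's convolution inequality yields $\int_0^T\|\bar f\|_{L^{5/2}}^2\,dt\lesssim\big(\int_0^T\tau^{-9/10}\,d\tau\big)^2\int_0^T\|\langle y\rangle^{-1/2}F\|_{L^2}^2\,dt$, which is $\lesssim$ the asserted bound. The main obstacle — and the reason the two-step smoothing above is needed — is the linear growth in $y$ of the source $F$, forced by the $v$-transport speed $J$ being linear in the diffusion variable $y$ and by the cutoff $\chi$: this rules out a direct use of the interior ultracontractivity, since $F$ does not lie in unweighted $L^2$ uniformly. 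The remedy is to pit the single power of $\langle y\rangle$ furnished by Proposition~\ref{prop:estimL2++} against the half-power smoothing $\langle y\rangle^{-1/2}\to L^2$ of the confined flow, keeping the Duhamel kernel integrable. Establishing that weighted smoothing estimate — the one ingredient not already stated above — is the real work, and I would prove it by the weighted-energy/Riccati argument indicated, in the spirit of the proof of Corollary~\ref{cor:L2LinftyInterior}.
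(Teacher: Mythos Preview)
Your Duhamel strategy is the paper's: write $\bar f(t)=\int_0^tS^0_{t-s}F_s\,ds$, interpolate Lemma~\ref{lem:LpInterior} and Corollary~\ref{cor:L2LinftyInterior} to get $\|S^0_\tau\|_{L^2\to L^{5/2}}\lesssim\tau^{-2/5}$, and control the source in $L^2$ using Lemma~\ref{lem:estimL2+}. The paper then applies Cauchy--Schwarz in time (kernel exponent $2/5<1/2$) rather than Young, but this is cosmetic.

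Where you diverge is in the treatment of the $y$-growth of $F$. You take the definition $\omega_0=e^{\alpha y}$ in \eqref{eq:def-barf} at face value and correctly observe that the terms $\hat J\,\varphi(\partial_v\chi)\,\omega_0 f$ and $-(\partial_v\hat J)\,\varphi\chi\,\omega_0 f$ then grow linearly in $y$, so that $F\notin L^2$ uniformly and the direct argument stalls. Your remedy---an extra weighted smoothing $\|S^0_\tau h\|_{L^2}\lesssim\tau^{-1/2}\|\langle y\rangle^{-1/2}h\|_{L^2}$ obtained by a Riccati estimate on $\int\langle y\rangle(S^{0*}_\tau g)^2$, composed with the $L^2\to L^{5/2}$ bound---is sound (the computation closes because $\alpha$ is taken large enough that the confinement $-2\alpha\langle y\rangle^2$ dominates the $O(y^2)$ contribution from $\langle y\rangle(\partial_v\bar J+\partial_y\bar K)$), and the resulting kernel exponent $9/10<1$ keeps the convolution integrable. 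The paper's own proof instead asserts $|F|\lesssim|\partial_t\varphi\,f\omega|+|\varphi\,\partial_v\chi\,f\omega|+|\varphi f\omega|+|\varphi\,\partial_y(f\widetilde\omega)|$ with no $y$-growth, which is only consistent if $\omega_0$ carries a $\langle y\rangle^{-1}$ factor (note the very next proposition indeed concludes with the weight $\omega/\langle y\rangle$, and the later Moser step uses $\omega_0=1/\langle y\rangle$ explicitly). With that reading the linear growth of $\hat J$ and $\partial_v\hat J$ is exactly cancelled by $\omega_0$, the bound on $|F|$ holds, and your extra weighted-smoothing step is unnecessary. So your argument is correct, but you have worked around a weight-bookkeeping inconsistency in \eqref{eq:def-barf} rather than resolving it; the simpler fix is to take $\omega_0\sim\omega/\langle y\rangle$ from the start and use the paper's direct $L^2$ bound on $F$.
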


\smallskip\noindent
{\sl Proof of Proposition~\ref{prop:L2LpInterior-barf}.} 
We denote by $S^0_t$ the semigroup associated to homogeneous equation \eqref{eq:KFP-interior-g} which is well defined by arguing as in Theorem~\ref{theo:existL2} taking advantage of the estimates established in Lemma~\ref{lem:LpInterior}. 
From Lemma~\ref{lem:LpInterior} and Corollary~\ref{cor:L2LinftyInterior}, we have  
$$
\| S_t^0 \|_{L^2 \to L^2} \lesssim 1, \quad \| S_t^0 \|_{L^2 \to L^\infty} \lesssim {1 \over t^{2}}, 
$$
for any $t \in (0,1)$. By interpolation (Holder inequality), we deduce 
$$
\| S_t^0 \|_{L^2 \to L^{5/2}} \lesssim   {1 \over t^{2/5}}, 
$$
for any $t \in (0,1)$. Because $\bar f$ satisfies \eqref{eq:KFP-interior-barf}, we may use the Duhamel formula together with the fact that $\bar f(0,\cdot) = 0$ on $\R^2$, and we get
$$
\bar f_t = \int_0^t S^0_{t-s} F_s ds.
$$
As a consequence, we have 
\bean
\| \bar f_t \|_{L^{5/2}(\R^2)} 
&\le& \int_0^t \| S_{t-s}^0 \|_{L^2 \to L^{5/2}} \| F_s \|_{L^2} ds 
\\
&\lesssim& \int_0^t { 1 \over (t-s)^{2/5}}  \| F_s \|_{L^2} ds 
\\
&\lesssim&  \Bigl(  \int_0^t {ds  \over (t-s)^{4/5}}\Bigr)^{1/2} \Bigl(  \int_0^t   \| F_s \|_{L^2}^2 ds \Bigr)^{1/2} 
\\
&\le&  t^{1/10} \| F \|_{L^2(\UU)},  
\eean
for any $t \in (0,1)$, where we have used the previous estimate in the second line, the Cauchy-Schwarz inequality in the third line and the very definition of $F$ in the last line. 
We deduce  \eqref{eq-prop:L2LpInterior-barf} by observing that 
$$
|F| \lesssim |\partial_t \varphi f \omega| +  |\varphi \partial_v \chi f  \omega| + |\varphi  f  \omega| +  |\varphi  \partial_y(f \widetilde \omega)|, 
$$
 with $\widetilde \omega$ defined by \eqref{eq:def-widetildeomega}, next by  
integrating the above estimate in the time variable and by using Lemma~\ref{lem:estimL2+}. \qed 

\medskip

We then come back on the analysis of a solution $f $ to the linear VCk equation  \eqref{eq:VCklinear}-\eqref{eq:LinearVCkBd}.  

\begin{prop}\label{prop:L2LpInterior-barf} For any  $\beta> 5/2$,  there exists $C$ such that 
$$
\int_0^T \|  f  \delta^{2\beta/5} {\omega \over \langle y \rangle}\|^2_{L^{5/2}} \varphi^2 dt \le C   \int_0^T \|   f \|_{L^2}^2 ( \varphi^2 + (\varphi')^2)  dt 
$$
\end{prop}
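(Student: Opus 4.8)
The plan is to deduce the estimate from the preceding Proposition by summing its bound over a Whitney-type decomposition of the interval $(0,v_F)$ in the $v$ variable. The weight $\delta^{2\beta/5}$ will come from attaching to each piece of the decomposition the corresponding power of its scale $\delta_j$, and the loss $\|\chi\|_{W^{1,\infty}}\simeq\delta^{-1}$ produced by the cut-off in the preceding Proposition will be reabsorbed by these powers --- which is exactly where the hypothesis $\beta>5/2$ enters. Passing from the compactly supported weight $\omega_0$ of the preceding Proposition to the weight $\omega/\langle y\rangle$ of the present statement is a routine exhaustion in the $y$ variable (the proof of that Proposition applies with a constant independent of the compactly supported weight $\omega_0$); the factor $\langle y\rangle^{-1}\le1$ only shrinks the left-hand side and is inessential.

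Concretely, I would first fix a partition of unity $1=\sum_j\chi_j$ on $(0,v_F)$ subordinate to a Whitney covering: $\chi_j\in C^1_c(0,v_F)$, $0\le\chi_j\le1$, the supports have bounded overlap, and on $\operatorname{supp}\chi_j$ one has $\delta\simeq\delta_j$ and $\|\chi_j'\|_{L^\infty}\lesssim\delta_j^{-1}$, where $\delta_j$ ranges over the dyadic scales $2^{-n}\in(0,v_F/2]$, each occurring a bounded number of times, the small ones accumulating only at the endpoints $v=0$ and $v=v_F$. Writing $f\,\delta^{2\beta/5}\,\omega/\langle y\rangle=\sum_j\chi_j\,f\,\delta^{2\beta/5}\,\omega/\langle y\rangle$, the pointwise superposition inequality $\bigl(\sum_j h_j\bigr)^{5/2}\lesssim\sum_j h_j^{5/2}$ for nonnegative $h_j$ of bounded overlap, the subadditivity of $t\mapsto t^{4/5}$, and the bound $\delta^{2\beta/5}\lesssim\delta_j^{2\beta/5}$ on $\operatorname{supp}\chi_j$ yield
$$
\int_0^T\varphi^2\Bigl\|f\,\delta^{2\beta/5}\tfrac{\omega}{\langle y\rangle}\Bigr\|_{L^{5/2}(\OO)}^2\,dt
\ \lesssim\ \sum_j\delta_j^{4\beta/5}\int_0^T\varphi^2\Bigl\|\chi_j\,f\,\tfrac{\omega}{\langle y\rangle}\Bigr\|_{L^{5/2}}^2\,dt .
$$
For each $j$, bounding $\omega/\langle y\rangle$ from above by a compactly supported weight $\omega_0$ on $\{y\le R\}$, applying the preceding Proposition with $\chi=\chi_j$, using $\|\chi_j\|_{W^{1,\infty}}^2\lesssim\delta_j^{-2}$, and then letting $R\to\infty$ with the help of Fatou's lemma, one gets
$$
\int_0^T\varphi^2\Bigl\|\chi_j\,f\,\tfrac{\omega}{\langle y\rangle}\Bigr\|_{L^{5/2}}^2\,dt
\ \lesssim\ \delta_j^{-2}\int_0^T\|f\omega\|_{L^2}^2\,(\varphi^2+(\varphi')^2)\,dt .
$$

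Combining the two displays gives
$$
\int_0^T\varphi^2\Bigl\|f\,\delta^{2\beta/5}\tfrac{\omega}{\langle y\rangle}\Bigr\|_{L^{5/2}}^2\,dt
\ \lesssim\ \Bigl(\sum_j\delta_j^{4\beta/5-2}\Bigr)\int_0^T\|f\omega\|_{L^2}^2\,(\varphi^2+(\varphi')^2)\,dt ,
$$
and it only remains to note that $\sum_j\delta_j^{4\beta/5-2}\simeq\sum_{n\ge0}2^{-n(4\beta/5-2)}<\infty$ exactly because $4\beta/5-2>0$, that is $\beta>5/2$ --- the finitely many bulk scales $\delta_j\simeq1$ contributing harmlessly. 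This is the announced estimate, the norm on the right being the weighted $L^2$ norm $\|f\omega\|_{L^2}$ that already appeared in the preceding Proposition. The proof is almost entirely bookkeeping; the one point that has to be checked with some care --- and thus the main, if minor, obstacle --- is the independence of the constant in the preceding Proposition from the compactly supported weight $\omega_0$, which legitimises the exhaustion step in $y$. Everything else reduces to the Whitney construction and the two elementary superposition inequalities, the counting of dyadic scales being precisely what forces $\beta>5/2$.
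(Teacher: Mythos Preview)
Your argument is correct and matches the paper's approach: the paper's proof is a one-line reference that takes dyadic cutoffs $\chi_k$ with $\|\nabla\chi_k\|\,2^{-k}\lesssim 1$ and defers the summation to \cite[Proposition~3.5, Step~2]{carrapatoso2024kinetic}, which is precisely the Whitney-sum computation you spell out. The condition $\beta>5/2$ enters for exactly the reason you identify, namely as the summability threshold for $\sum_k 2^{-k(4\beta/5-2)}$.
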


\smallskip\noindent
{\sl Proof of Proposition~\ref{prop:L2LpInterior-barf}.} We take $\chi = \chi_k $ with $\| \nabla \chi_k \|  2^{-k}  \lesssim 1$ uniformly in $k$, and we repeat the proof of Step 2 in \cite[Proposition~3.5]{carrapatoso2024kinetic}. 
\qed

\medskip
We are now able establish the cornerstone estimate of gain of integrability in $(v,y)$ variables for a solution $f $ to the linear VCk equation  \eqref{eq:VCklinear}-\eqref{eq:LinearVCkBd} 
by gathering the last result and Proposition~\ref{prop:estimL2++}.  

\begin{prop}\label{prop:L2Lr} There exists $r > 2$ such that 
$$
\int_0^T  \varphi^2  \|  f \omega \|_{L^r(\OO)}^2 dt \lesssim  \| (\varphi + |\varphi'|)   f \omega \|_{L^2(\UU)}^2, 
$$
for any $\varphi \in C^1(0,T) \cap C_0(0,T)$, $T \in (0,1)$. 
\end{prop}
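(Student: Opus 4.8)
The plan is to read off the $L^r$-in-space gain, for some $r>2$, by interpolating the two preceding estimates: the $L^{5/2}$ gain of integrability for $f\,\delta^{2\beta/5}\omega/\langle y\rangle$ furnished by the previous proposition, and the boundary-and-infinity penalized $L^2$ estimate of Proposition~\ref{prop:estimL2++}, which controls $f\omega\,(\delta^{-1/27}+\langle y\rangle)^{1/2}$ in $L^2(\OO)$. Accordingly I would set
$$
g_1 := f\,\delta^{2\beta/5}\,\frac{\omega}{\langle y\rangle}, \qquad g_2 := f\,\omega\,(\delta^{-1/27}+\langle y\rangle)^{1/2},
$$
so that the two input estimates become bounds on $\int_0^T\varphi^2\|g_1\|_{L^{5/2}(\OO)}^2\,dt$ and on $\int_0^T\varphi^2\|g_2\|_{L^2(\OO)}^2\,dt$ by $\int_0^T(\varphi^2+(\varphi')^2)\|f\omega\|_{L^2(\OO)}^2\,dt$ (for the second, using $\varphi\varphi'_+\le\tfrac12(\varphi^2+(\varphi')^2)$ and enlarging $\|f\|_{L^2}$ to $\|f\omega\|_{L^2}$ in the first).

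I would first fix an interpolation parameter $\lambda\in(0,1)$ and define $r$ by $\frac1r := \frac{2\lambda}{5}+\frac{1-\lambda}{2}$; since this expression has slope $-\tfrac1{10}$ in $\lambda$ and equals $\tfrac12$ at $\lambda=0$, every $\lambda\in(0,1)$ yields some $r>2$. The core of the argument is the pointwise identity
$$
f\omega = g_1^{\lambda}\,g_2^{1-\lambda}\cdot \delta^{-2\beta\lambda/5}\,\langle y\rangle^{\lambda}\,(\delta^{-1/27}+\langle y\rangle)^{-(1-\lambda)/2},
$$
whose last (correction) factor must be shown bounded on $\OO$. Using the arithmetic--geometric inequality $\delta^{-1/27}+\langle y\rangle\gtrsim \delta^{-1/54}\langle y\rangle^{1/2}$, the correction factor is $\lesssim \delta^{(1-\lambda)/108-2\beta\lambda/5}\,\langle y\rangle^{\lambda-(1-\lambda)/4}$, and this is bounded (recall $\delta$ is bounded above on $\OO$ and $\langle y\rangle\ge 1$) as soon as the two elementary conditions $\frac{2\beta\lambda}{5}\le\frac{1-\lambda}{108}$ and $\lambda\le\frac{1-\lambda}{4}$ hold, i.e. for every $\lambda\le \frac{5}{216\beta+5}$, which — since $\beta>5/2$ — automatically respects $\lambda\le\tfrac15$. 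Fixing such a $\lambda$, one gets $|f\omega|\lesssim g_1^{\lambda}g_2^{1-\lambda}$ pointwise, and the interpolation inequality for Lebesgue norms (H\"older in $\OO$, legitimate by the choice of $r$) gives $\|f\omega\|_{L^r(\OO)}\lesssim \|g_1\|_{L^{5/2}(\OO)}^{\lambda}\,\|g_2\|_{L^2(\OO)}^{1-\lambda}$.

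It then remains to multiply by $\varphi^2$, write $\varphi^2=\varphi^{2\lambda}\varphi^{2(1-\lambda)}$, integrate in $t$, and apply H\"older in the time variable with conjugate exponents $\tfrac1\lambda$ and $\tfrac1{1-\lambda}$, which yields
$$
\int_0^T\varphi^2\,\|f\omega\|_{L^r(\OO)}^2\,dt \lesssim \Bigl(\int_0^T\varphi^2\,\|g_1\|_{L^{5/2}(\OO)}^2\,dt\Bigr)^{\!\lambda}\Bigl(\int_0^T\varphi^2\,\|g_2\|_{L^2(\OO)}^2\,dt\Bigr)^{\!1-\lambda}.
$$
By the two input estimates both factors are $\lesssim \int_0^T(\varphi^2+(\varphi')^2)\|f\omega\|_{L^2(\OO)}^2\,dt = \|(\varphi+|\varphi'|)f\omega\|_{L^2(\UU)}^2$, and the geometric mean of two quantities each dominated by this bound is again dominated by it. This proves the estimate for $\varphi\in C^1_c(0,T)$, and the stated class $\varphi\in C^1(0,T)\cap C_0(0,T)$ follows by a routine truncation/density argument.

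The step I expect to be the real content is the weight bookkeeping above: one must check that a genuine interval of admissible $\lambda>0$ exists — equivalently that $r$ can be taken strictly larger than $2$ — which hinges on having simultaneously the decay factor $\langle y\rangle^{-1}$ inside $g_1$, the growth factor $(\delta^{-1/27}+\langle y\rangle)^{1/2}$ inside $g_2$, and the slack $\beta>5/2$ in the power of $\delta$. Everything else is a mechanical combination of the two input estimates.
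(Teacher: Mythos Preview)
Your proof is correct and follows essentially the same interpolation strategy as the paper: combine the $L^{5/2}$ estimate on $f\omega\,\delta^{2\beta/5}/\langle y\rangle$ with the penalized $L^2$ estimate from Proposition~\ref{prop:estimL2++}, and tune an interpolation parameter so that the boundary and large-$y$ weights cancel. The paper packages the second ingredient slightly differently --- it first interpolates the $\langle y\rangle$ and $\delta^{-1/27}$ parts of Proposition~\ref{prop:estimL2++} into a single bound on $f\omega\,\Delta^{-\mu}$ in $L^2$ with $\Delta:=\delta^{2\beta/5}/\langle y\rangle$, and then cites \cite[Proposition~3.7]{carrapatoso2024kinetic} for the final $L^{5/2}$--$L^2$ interpolation --- whereas you keep the weight $(\delta^{-1/27}+\langle y\rangle)^{1/2}$ and carry out the pointwise bound and H\"older step explicitly; this is a cosmetic difference, not a different argument.
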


\begin{proof}[Proof of Proposition~\ref{prop:L2Lr}]
We set $\Delta :=  \delta^{2\beta/5} / \langle y \rangle$, so that  Proposition~\ref{prop:L2LpInterior-barf} writes 
\beqn\label{eq:EstimL2LpBIS}
\int_0^T \|  f   \omega  \Delta \|^2_{L^{5/2}(\OO)} \varphi^2 dt \lesssim   \int_0^T \| f\omega  \|_{L^2(\OO)}^2 ( \varphi^2 + (\varphi')^2)  dt. 
\eeqn
From Proposition~\ref{prop:estimL2++} and the Holder inequality, we have next
\bean
\| f \varphi \omega {\langle y \rangle^{\theta/2} \over \delta^{  (1-\theta)/12} } \|_{L^2} 
&\le& \| f \varphi \omega \langle y \rangle^{1/2} \|_{L^2}^\theta  \| f \varphi \omega {1  \over \delta^{1/12} } \|_{L^2}^{1-\theta}
\\
&\lesssim& \|  (\varphi + |\varphi'| ) f \omega   \|_{L^2}, 
\eean
for any $\theta \in (0,1)$. Choosing $\theta = \theta_0$ such that 
$$
  (1/\theta_0-1) /  6 = 2 \beta/5
$$
and setting $\mu := \theta_0/2$, we thus deduce 
\beqn\label{eq:EstimL2strongBIS}
\int_0^T \|  f   \omega  \Delta^{-\mu }  \|^2_{L^{2}(\OO)} \varphi^2 dt
\lesssim   \int_0^T \| f\omega  \|_{L^2(\OO)}^2 ( \varphi^2 + (\varphi')^2)  dt. 
\eeqn
We conclude by interpolating \eqref{eq:EstimL2LpBIS} and  \eqref{eq:EstimL2strongBIS}  as in \cite[Proposition~3.7]{carrapatoso2024kinetic}.
\end{proof}

  \subsection{Half ultracontractivity estimate using Nash's argument}

In the spirit of Nash work~\cite{MR0100158}, we  deduce a time pointwise gain of integrability estimate. 
\begin{cor}\label{cor:L1Lr} There exists   $\nu_1> 0$ such that 
\beqn\label{eq:corL1Lr}
  \| f(t,\cdot) \|_{L^2_\omega(\OO)}  \lesssim    {1 \over t^{\nu_1} } \| f_0 \|_{L^1_\omega(\OO)} , \quad \forall \, t \in (0,1).
\eeqn
\end{cor}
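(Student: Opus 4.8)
The plan is to run Nash's gain-of-integrability argument in its time-integrated (Moser) form, with Proposition~\ref{prop:L2Lr} playing the role of Nash's inequality. Throughout set $u(\tau):=\|f_\tau\widetilde\omega\|_{L^2(\OO)}^2$, which is comparable to $\|f_\tau\|_{L^2_\omega}^2$ and, by Theorem~\ref{theo:existL2}, continuous on $[0,\infty)$, and set $M:=\|f_0\|_{L^1_\omega}$; if $M=\infty$ there is nothing to prove, and if $u(t)=0$ the estimate is trivial, so assume $M<\infty$ and $u(t)>0$. From Lemma~\ref{lem:GrowthLp} on the interval $[0,1]$ I record two facts: with $p=1$, $\|f_\tau\|_{L^1_\omega}\lesssim M$ for $\tau\in[0,1]$; with $p=2$, applied between two times $\tau\le\sigma\le1$, $u(\sigma)\le e^{2\kappa(\sigma-\tau)}u(\tau)$, hence the near-monotonicity $u(\tau)\ge e^{-2\kappa}u(t)$ for every $\tau\in[0,t]$ — so $u$ stays bounded below by a positive constant on $[0,t]$ and $u(0)\in(0,\infty)$ — and $\sup_{[\tau_1,\tau_2]}u\le e^{2\kappa}u(\tau_1)$ whenever $[\tau_1,\tau_2]\subset[0,1]$.

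First I would turn Proposition~\ref{prop:L2Lr} into a nonlinear time-integral inequality for $u$. Fix $r>2$ as in that proposition and put $\theta:=\tfrac{r}{2(r-1)}\in(\tfrac12,1)$, $p:=1/\theta\in(1,2)$. The H\"older interpolation $\|g\|_{L^2}\le\|g\|_{L^1}^{1-\theta}\|g\|_{L^r}^{\theta}$ with $g=f_\tau\omega$, combined with $\|f_\tau\omega\|_{L^1}\lesssim M$, gives $\|f_\tau\omega\|_{L^r}^2\gtrsim M^{-2(p-1)}u(\tau)^p$; inserting this lower bound into Proposition~\ref{prop:L2Lr} produces, for every $T\in(0,1)$ and every $\varphi\in C^1_c(0,T)$,
\begin{equation*}
\int_0^T\varphi^2\,u^p\,d\tau\ \lesssim\ M^{2(p-1)}\int_0^T\bigl(\varphi^2+(\varphi')^2\bigr)\,u\,d\tau.\tag{$\star$}
\end{equation*}
It is the extra factor $M^{2(p-1)}$, i.e. the use of the $L^1$ bound, that makes the power $p>1$ on the left-hand side available and the inequality self-improving.

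Next comes the iteration. For any $s\in(0,1/2]$ I apply $(\star)$ with $T=3/4$ and $\varphi$ equal to $1$ on $[s/2,s]$, supported in $(s/4,3s/2)\subset(0,3/4)$, with $|\varphi'|\lesssim s^{-1}$. Bounding the left-hand side from below by $\int_{s/2}^s u^p\ge\tfrac{s}{2}\,(e^{-2\kappa}u(s))^p$ and the right-hand side from above by $\lesssim M^{2(p-1)}s^{-2}\int_{s/4}^{3s/2}u\lesssim M^{2(p-1)}s^{-1}\sup_{[s/4,3s/2]}u\lesssim M^{2(p-1)}s^{-1}u(s/4)$ — using the near-monotonicity twice — I obtain
\begin{equation*}
u(s)^p\ \le\ C_*\,M^{2(p-1)}\,s^{-2}\,u(s/4),\qquad 0<s\le 1/2,
\end{equation*}
with $C_*$ depending only on $r$, on $\kappa$ and on the constant of Proposition~\ref{prop:L2Lr}. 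Fixing $t\in(0,1/2]$ and iterating this one-step inequality along $s=t,\,t/4,\,t/16,\dots$, then taking logarithms and summing the geometric series $\sum_{j\ge0}p^{-j-1}=(p-1)^{-1}$ and $\sum_{j\ge0}j\,p^{-j-1}=(p-1)^{-2}$, while using $u(t/4^k)^{1/p^k}\to1$ as $k\to\infty$ (continuity of $u$ at $0$ and $u(0)\in(0,\infty)$), I get $u(t)\lesssim M^2\,t^{-2/(p-1)}$, that is, $\|f_t\|_{L^2_\omega}\lesssim t^{-\nu_1}\|f_0\|_{L^1_\omega}$ with $\nu_1:=\tfrac{1}{p-1}=\tfrac{r}{r-2}>0$. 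For $t\in(1/2,1)$ the estimate follows at once: $u(t)\lesssim u(1/2)\lesssim M^2\le t^{-2\nu_1}M^2$ by near-monotonicity and the previous case.

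The only genuinely delicate part is the iteration. One must (i) keep all auxiliary functions supported inside $(0,1)$, where Lemma~\ref{lem:GrowthLp} and Proposition~\ref{prop:L2Lr} are valid — which is why the one-step inequality is first proved for $s\le1/2$ and the range $t\in(1/2,1)$ is handled separately — and (ii) bound the left-hand side of $(\star)$ from below by a fixed power of $u(t)$ itself; the near-monotonicity $u(\tau)\ge e^{-2\kappa}u(t)$ on $[0,t]$ is precisely what is needed here, and it is this ingredient (rather than a dissipation identity, which would only involve $\partial_y$-derivatives) that couples the time-integrated gain of integrability back to a pointwise-in-time bound. Everything else — the interpolation, the choice of cutoff, and the summation of the series — is routine bookkeeping of constants.
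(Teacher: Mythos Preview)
Your proof is correct and reaches the same conclusion, but the route is genuinely different from the paper's. The paper does a \emph{single} application of Proposition~\ref{prop:L2Lr}: it interpolates $\|f\|_{L^2_\omega}^2\le\|f\|_{L^1_\omega}^{2\theta}\|f\|_{L^r_\omega}^{2(1-\theta)}$ inside the time integral on the \emph{left-hand side} of the inequality $\int(\varphi^2+(\varphi')^2)\|f\|_{L^2_\omega}^2\,dt$, splits by H\"older in $t$, and cancels the $L^r$ factor against Proposition~\ref{prop:L2Lr}; the result is $C_2(\varphi,T)\|f_T\|_{L^2_\omega}\lesssim C_1(\varphi,T)\|f_0\|_{L^1_\omega}$ for any $\varphi\in C^1\cap C_0(0,T)$, and the power of $T$ is then read off by choosing the self-similar test function $\varphi(t)=\varphi_0(t/T)$ with $\varphi_0(s)=s^{1/\theta}(1-s)^{1/\theta}$, yielding $\nu_1=1/\theta=2(r-1)/(r-2)$.

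Your approach instead substitutes the interpolation into the $L^r$ side to obtain the nonlinear inequality $(\star)$ for $u$, and then runs a dyadic Moser-type iteration. The cost is that you must control $u(t/4^k)^{1/p^k}$ at the end, which forces you to know that $u(0)=\|f_0\|_{L^2_{\widetilde\omega}}^2\in(0,\infty)$; this is harmless here since the well-posedness framework is $L^2_\omega$, but the paper's one-shot argument never evaluates $u$ at $t=0$ (the cutoff vanishes there) and so is insensitive to this point. On the other hand, your iteration gives the slightly sharper exponent $\nu_1=r/(r-2)$ and is perhaps closer in spirit to the Moser scheme used later in Section~\ref{sec:ultra} for the $L^2\to L^\infty$ half.
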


\begin{proof}[Proof of Corollary~\ref{cor:L1Lr}] 
For $\varphi \in C^1(0,T) \cap C_0(0,T)$, $T \in (0,1)$, using first an interpolation inequality and next an Holder inequality, we have 
\bean
 \int_0^T  (\varphi^2 + ( \varphi')^2)\|   f \|^2_{L^2_{\omega}} dt 
 &\le&
 \int_0^T  (1 +  ( \varphi'/\varphi)^2) \varphi^{2\theta} \|   f \|_{L^1_{\omega}}^{2\theta} \varphi^{2(1-\theta)} \|   f \|_{L^r_{\omega}}^{2(1-\theta)} dt 
\\
&\le&
\Bigl( \int_0^T  (1 +  ( \varphi'/\varphi)^2)^{1/\theta} \varphi^2  \|f \|_{L^1_{\omega}}^2 dt  \Bigr)^{\theta} 
\Bigl( \int_0^T   \varphi^2  \|f \|_{L^r_{\omega}}^2 dt  \Bigr)^{1-\theta} , 
\eean
with $r > 2$ given by Proposition~\ref{prop:L2Lr} and $\theta \in (0,1)$ such that $1/2=\theta+(1-\theta)/r$. Gathering this  estimate and the estimate of Proposition~\ref{prop:L2Lr}, 
we obtain after simplification 
\bean
 \int_0^T  (\varphi^2 + ( \varphi')^2)\|   f \|^2_{L^2_{\omega}} dt 
\lesssim   \int_0^T  (1 +  ( \varphi'/\varphi)^2)^{1/\theta} \varphi^2  \|f \|_{L^1_{\omega}}^2 dt.
\eean
Using the estimates 
$$
 \|  f (T,.)   \|_{L^{2}_{\omega}} \lesssim   \|  f(t,.)  \|_{L^{2}_{\omega}}, 
 \quad 
  \|  f(t,.)  \|_{L^{1}_{\omega}}  \lesssim    \|  f_0   \|_{L^{1}_{\omega}}, 
 $$
 for any $0 < t < T < 1$ established in Lemma~\ref{lem:GrowthLp},  we deduce that  
\bean
C_2(\varphi,T)  \|   f(T,\cdot) \|_{L^2_{\omega}}  
\lesssim C_1(\varphi,T)  \|f \|_{L^1_{\omega}}, 
\eean
with
$$ 
C_2(\varphi,T)^2 :=    \int_0^T  (\varphi^2 + ( \varphi')^2) dt, 
\quad
C_1(\varphi,T)^2 :=    \int_0^T  (1 +  ( \varphi'/\varphi)^2)^{1/\theta} \varphi^2    dt.
$$
We finally take $\varphi(t) := \varphi_0(t/T)$, $\varphi_0(s) = s^{1/\theta}(1-s)^{1/\theta}$, and we observe that 
$$
C_2(\varphi,T)^2 \ge C_2(\varphi_0,1)^2 T^{-1}, 
\quad
C_1(\varphi,T)^2 \le C_1(\varphi_0,1)^2 T^{-1-2/\theta}
$$
and $(1 +  ( \varphi_0'/\varphi_0)^2)^{1/\theta} \varphi_0^2 \in L^\infty(0,1)$, 
from what we immediately conclude that \eqref{eq:corL1Lr} holds with $\nu := 1/\theta$. 
\end{proof}

  \subsection{The other half ultracontractivity estimate using Moser's argument}

   With Corollary~\ref{cor:L1Lr}, we have made  
    half of the proof of Theorem~\ref{theo-Ultra}. In order to cover the other half of the way, we may proceed using one of De Giorgi, Nash or Moser approaches. 
   We briefly present the     Moser approach based on a iterated scheme.

\begin{lem}\label{lem:Moser-estimL2+}
Assume $\omega := e^{\alpha y}$, $\alpha > 0$.
There exists a constant $\widetilde\kappa \ge 0$  such that  any (nonnegative) solution $f $ to the linear VCk equation \eqref{eq:VCklinear}-\eqref{eq:LinearVCkBd} 
satisfies 
 \beqn\label{eq:lem:estimL2+}
 \int_0^T \varphi^2 \int_\OO  \Bigl\{ (f \widetilde\omega)^p \Bigl[ {J^2 \over \delta^{1/2} \langle y \rangle^2} +  \alpha \langle y \rangle \Bigr] + (\partial_y ( f \widetilde\omega)^{p/2} )^2 \Bigr\}  \le p \widetilde\kappa 
\int_0^T ( \varphi \varphi'_+ + \varphi^2)  \int_\OO  (f \widetilde\omega)^p, 
\eeqn 
 for any $p \in [2,\infty)$ and $\varphi \in C^1_c(0,T)$, where $\widetilde\kappa = \widetilde\kappa (a_*,y_*,a^*)$  
 and $ \widetilde \omega$ is defined in \eqref{eq:def-widetildeomega}.
\end{lem}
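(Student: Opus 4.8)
The statement of Lemma~\ref{lem:Moser-estimL2+} is the $L^p$-analogue of the $L^2$ boundary-penalization estimate of Lemma~\ref{lem:estimL2+}, so the plan is to repeat that computation verbatim but testing the equation against $(f\widetilde\omega)^{p-1}\widetilde\omega$ instead of $f\widetilde\omega^2$. First I would recall from the proof of Lemma~\ref{lem:GrowthLp} the weighted $L^p$ differential identity
\beqn\label{eq:Moser-start}
 \frac1p  {d \over dt} \int_\OO f^p \widetilde\omega^p
= - {4 (p-1) \over p^2}  \int_\OO \fraka (\partial_y (f\widetilde\omega)^{p/2})^2 +  \int_\OO  f^p \widetilde\omega^p \varpi
+ \hbox{(boundary terms)},
\eeqn
where $\varpi$ is given by \eqref{def:varpi} and the boundary contributions on $\Sigma_0$ and $\Sigma_2$ have already been shown there to be non positive with the choice \eqref{eq:def-w}, \eqref{eq:def-widetildeomega}. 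The point is that all of this was done for $\widetilde\omega$, not for the penalized weight $\widecheck\omega = \widetilde\omega\,\frak P^{1/2}$ of \eqref{eq:def_baromega}; so I would instead redo the calculation with $\widecheck\omega$ in place of $\widetilde\omega$, exactly as in the proof of Lemma~\ref{lem:estimL2+}, obtaining the same structure with $\varpi$ replaced by $\widecheck\varpi = \widetilde\varpi + \fraka\frac{\partial_y\widetilde\omega}{\widetilde\omega}\frac{\partial_y\frak P}{\frak P} + \frac{K}{2}\frac{\partial_y\frak P}{\frak P} + \frac{J}{2}\frac{\partial_v\frak P}{\frak P}$ up to the $p$-dependent coefficients $(1-\frac1p)$, $(1-\frac2p)$ in front of the curvature and drift terms. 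The key structural fact, already established in \eqref{eq:checkvarpi}, is that
\beqn\label{eq:Moser-varpi}
\widecheck\varpi \le \widetilde\kappa - {1 \over 8\delta_*^{1/2}}{1 \over \delta^{1/2}}{J^2 \over \langle y\rangle^2} - \alpha\langle y\rangle,
\eeqn
and since the $p$-dependent coefficients are bounded by $1$ uniformly in $p\in[2,\infty)$ the bad terms are only enhanced (the signs work in our favour), so \eqref{eq:Moser-varpi} persists with the same $\widetilde\kappa = \widetilde\kappa(a_*,y_*,a^*)$ for every $p\ge 2$.

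Next I would check that the boundary contributions remain non positive in the $L^p$ setting. On $\Sigma_0$ one has $\widecheck\omega = 1$ near $y=0$ (since $\frak P=1$ there, $\delta'$ being supported away from where $\delta$ is small relative to $\delta_*$... more precisely $\frak P\to 1$ and $\widetilde\omega\to1$ on $(0,v_F)\times(0,y_F/2)$) so the boundary integrand is $(1-\frac1p)K + (1-\frac2p)\fraka\frac{\partial_y\widecheck\omega}{\widecheck\omega} = (1-\frac1p)K \ge (1-\frac1p)y_*\ge 0$, giving a non positive contribution after multiplication by $n_0=-1$; this is exactly the argument following \eqref{eq:def-widetildeomega}. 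On $\Sigma_2$ the computation is the one displayed right after \eqref{eq:def-widetildeomega}: using $J_\xi\ge J$ and the reset boundary condition \eqref{eq:VCktBd3} one gets
\beqn\label{eq:Moser-Sigma2}
-\frac1p\int_{\Sigma_2} J f^p\widetilde\omega^p n_v = \frac1p\int_{y_F}^\infty\bigl((Jf)^p(0,y) - (JJ_\xi^{p-1}f^p)(v_F,y)\bigr)w^p y^{1-p}
\le \frac1p\int_{y_F}^\infty\bigl((Jf)^p(0,y) - (Jf)^p(v_F,y)\bigr)w^p y^{1-p} = 0,
\eeqn
which holds for all $p\ge 1$ with no change. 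Assembling \eqref{eq:Moser-start}, \eqref{eq:Moser-varpi}, \eqref{eq:Moser-Sigma2} one arrives at
\beqn\label{eq:Moser-diff}
 \frac1p{d\over dt}\int_\OO (f\widecheck\omega)^p + {4(p-1)\over p^2}\int_\OO \fraka(\partial_y(f\widecheck\omega)^{p/2})^2
+ \int_\OO (f\widecheck\omega)^p\Bigl({1\over 8\delta_*^{1/2}}{1\over \delta^{1/2}}{J^2\over \langle y\rangle^2} + \alpha\langle y\rangle\Bigr) \le \widetilde\kappa\int_\OO (f\widetilde\omega)^p.
\eeqn
Finally I would multiply \eqref{eq:Moser-diff} by $p\varphi^2$, integrate in time over $(0,T)$, move the $\frac1p\frac{d}{dt}$ term by parts (which produces the $\varphi\varphi'_+$ factor after using $\varphi\in C^1_c$, exactly as at the end of the proof of Lemma~\ref{lem:estimL2+}), use $\fraka\ge a_*$ to absorb the constant into $\widetilde\kappa$, bound $\frac{4(p-1)}{p^2}\ge \frac{c}{p}$ so that $\int\fraka(\partial_y(f\widecheck\omega)^{p/2})^2$ appears with the stated weight after multiplying through by $p$, and replace $\widecheck\omega$ by $\widetilde\omega$ everywhere using $\widecheck\omega\simeq\widetilde\omega\simeq\omega$ (the implicit constants being $p$-independent since $\frak P$ is bounded above and below independently of $p$). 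This yields \eqref{eq:lem:estimL2+} with a $p$ on the right-hand side coming from the factor $p$ multiplying the time-integrated $\frac1p\frac{d}{dt}$ term and the $\frac1p$ loss in the dissipation coefficient.

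\textbf{Main obstacle.} The genuinely delicate point is tracking the $p$-dependence so that the final constant $\widetilde\kappa$ is \emph{independent of $p$} and only the explicit factor $p$ remains — in particular one must verify that the cross terms in $\widecheck\varpi$ coming from $\partial_v\frak P$ and $\partial_y\frak P$ (which carry the coefficients $(1-\frac1p)$ through $\frac{\partial_y\widetilde\omega}{\widetilde\omega}$) do not degrade the negative penalization term $-\frac{1}{8\delta_*^{1/2}}\delta^{-1/2}J^2\langle y\rangle^{-2}$ as $p\to\infty$; this is fine because the negativity in \eqref{eq:checkvarpi} comes from the $-\frac14\delta^{-1/2}(\delta')^2\delta_*^{-1/2}J/\langle J\rangle^2$ piece of $\partial_v\frak P$ paired against $J$ in $\frac{J}{2}\frac{\partial_v\frak P}{\frak P}$, whose coefficient is $\frac12$, not something vanishing or blowing up in $p$. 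One also has to make sure the renormalized/approximation framework of Theorem~\ref{theo:existL2} legitimizes testing against the unbounded function $(f\widetilde\omega)^{p-1}\widetilde\omega$; this is handled as usual by truncating $f$, proving the inequality for the truncation, and passing to the limit using Fatou on the left and monotone/dominated convergence on the right, exactly as the word ``(nonnegative)'' in the statement signals.
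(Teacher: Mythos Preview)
Your overall strategy is the right one, and most of the bookkeeping (the $\Sigma_2$ boundary via $\delta=0$ there, the large-$y$ asymptotics of $\widecheck\varpi$, the time integration against $\varphi^2$) is correct. But there is a genuine gap in your treatment of the $\Sigma_0$ boundary term, and it is precisely the point where the paper's proof departs from a straight repetition of Lemma~\ref{lem:estimL2+}.

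You claim that near $y=0$ one has $\frak P=1$ and hence $\partial_y\widecheck\omega/\widecheck\omega=0$, so the $\Sigma_0$ integrand collapses to $(1-\tfrac1p)K\ge 0$. This is false: $\frak P$ from \eqref{eq:def_baromega} involves $J(v,y)$, and $J(v,0)=-y_Lv\neq 0$ for $v>0$, so $\frak P\not\equiv 1$ on $\Sigma_0$ and $\partial_y\frak P|_{\Sigma_0}\neq 0$. (The function $\delta=\delta(v)$ measures the distance to the $v$-boundary, not to $y=0$.) In the $L^2$ case this did not matter because the coefficient $(1-\tfrac2p)$ in front of $\fraka\,\partial_y\widecheck\omega/\widecheck\omega$ vanishes for $p=2$; for $p>2$ it does not, and the extra term $(1-\tfrac2p)\fraka\,\partial_y\frak P/(p\frak P)$ can be negative, destroying the sign of the $\Sigma_0$ boundary contribution. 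The paper fixes this by \emph{redefining} the penalization: it sets $\widecheck\omega^p=\widetilde\omega^p\frak P$ with a modified
\[
\frak P:=1-\wp\,\delta^{1/2}\,\frac{\delta' J}{\langle J_\xi\rangle^2},\qquad \wp:=\tfrac12\min\Bigl(\tfrac{1}{\delta_*},\ \tfrac{y_*}{a^*}\tfrac{1}{\delta_\sharp}\Bigr),
\]
the point being that the smaller constant $\wp$ (now depending on $y_*/a^*$) guarantees $(1-\tfrac1p)\frakb+(1-\tfrac2p)\fraka\,\partial_y\frak P/\frak P\ge 0$ on $\Sigma_0$ uniformly in $p\ge 2$. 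The replacement $\langle J\rangle\to\langle J_\xi\rangle$ makes $\partial_y\frak P|_{\Sigma_0}$ cleanly computable (since $J_\xi(v,0)=0$). Once this modification is made, the rest of your outline --- bounding $\widecheck\varpi$ (now with $\tfrac1p$-coefficients on the $\frak P$-corrections, all harmless) by $\widetilde\kappa-\tfrac{J^2}{\delta^{1/2}\langle y\rangle^2}-\alpha\langle y\rangle$, passing from $\widecheck\omega$ to $\widetilde\omega$ via $\frak P\simeq 1$, and integrating against $\varphi^2$ --- goes through as you describe.
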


\begin{proof}[Proof of Lemma~\ref{lem:Moser-estimL2+}]  
We adapt the proofs of  Lemma~\ref{lem:GrowthLp} and Lemma~\ref{lem:estimL2+}. 
We set
$$
\widecheck\omega^p := \widetilde \omega^p \frak P ,  
$$
with now $\frak P$  defined by 
$$
\frakP := 1 -  \wp {\delta^{1/2} } {\delta' J \over \langle J_\xi \rangle^2}, \quad \wp := \frac12   \min \bigl( \frac1{\delta^*}, {y_* \over a^*} \frac1{\delta^\sharp} \Bigr), 
$$
where  we define $\delta_\sharp^2 := \sup [ \delta (\delta')^2 (v_E-v)^2]$ and  we recall that $\delta^2_* := \sup [\delta (\delta')^2]$. 
Proceeding exactly as during the  proof of Lemma~\ref{lem:GrowthLp}, we have 
  \bean
 \frac1p  {d \over dt} \int_\OO f^p \widecheck\omega^p
&=& - {4 (p-1) \over p^2}  \int_\OO \fraka (\partial_y (f\widecheck\omega)^{p/2})^2 +  \int_\OO  f^p \widecheck\omega^p \widecheck\varpi
\\
&& + \int_{\Sigma_0} [(1-\frac1p) K + (1-\frac2p) \fraka   {\partial_y\check\omega \over \check\omega} ] n_0     (f \widecheck\omega)^p   
-   \frac1p \int_{\Sigma_2} J f^p \widecheck\omega^p n_v,
\eean
with 
$$
\widecheck\varpi := 2(1- \frac{1}{p} ) \fraka \bigl( {\partial_y \check\omega \over \check\omega} \bigr)^2
+ (\frac2p - 1) \fraka {\partial^2_{yy} \check \omega \over \check\omega}
+ K  {\partial_y \check\omega \over \check\omega} + J  {\partial_v \check\omega \over \check\omega} 
+ ( \frac{1}{p} -1) \partial_y K 
+ ( \frac{1}{p} -1) \partial_v J. 
$$
 It is worth emphasizing that because of the definition of $\wp$, we have $\frakP \ge 1/2$ on $\OO$, we have $|\partial_y \frakP | =  \wp \delta^{1/2} |\delta'|(v_E-v) \le \wp/\delta_\sharp$ on $\Sigma_0$, and thus 
\bean
(1-\frac1p) K + (1-\frac2p) \fraka   {\partial_y\check\omega \over \check\omega}
&=& 
(1-\frac1p) \frakb + (1-\frac2p) \fraka   {\partial_y\frakP \over \frakP}
\\
&\ge&
(1-\frac1p) y_* + (\frac2p-1) a^* 2   {\wp \over\delta_\sharp} \ge 0
\eean
on $\Sigma_0$. As a consequence, the arguments in Lemma \ref{lem:GrowthLp} remain valid for the two terms involving the boundary sets $\Sigma_0$ and $\Sigma_2$ and these ones are thus   nonpositive. 
Also observing that 
$$
 -   \int_\OO \fraka (\partial_y (f\widecheck\omega)^{p/2})^2 
 \le - \frac12   \int_\OO \fraka (\partial_y (f\widetilde\omega)^{p/2})^2 \frakP +   \int_\OO \fraka   (f\widetilde\omega)^p  (\partial_y \frakP^{1/2})^2 , 
 $$
 we deduce that 
 \bean
 \frac1p  {d \over dt} \int_\OO f^p \widecheck\omega^p
&\le& - { (p-1) \over p^2}  \int_\OO \fraka (\partial_y (f\widetilde\omega)^{p/2})^2  +  \int_\OO  f^p \widetilde\omega^p \widetilde\varpi, 
\eean
with 
\bean
\widetilde\varpi &:=& \frakP \widecheck\varpi + \fraka     (\partial_y \frakP^{1/2})^2. 
\eean
Observing that 
$$
{\partial \check \omega \over \check \omega} = 
{\partial \tilde \omega \over \tilde \omega} + \frac1p
{\partial \frakP \over  \frakP}  
$$
and  that 
$$
{\partial^2_{yy} \check \omega \over \check \omega} 
= {\partial^2_{yy} \tilde \omega \over \tilde \omega} 
+  \frac2p {\partial_y \tilde \omega \over \tilde \omega}  {\partial_y \frakP \over  \frakP}  
+ \frac{1-p}{p^2}
{\partial_y \tilde \omega \over \tilde \omega}  {\partial_y \frakP \over  \frakP}  
+  \frac1p
{\partial_{yy}^2\frakP \over  \frakP},
$$
we get 
\bean
\widetilde\varpi &=&  \frakP \varpi  
+  \fraka  \Bigl\{ \frac2p {\partial_y \tilde \omega \over \tilde \omega}  \partial_y \frakP  
+ \Bigl(  \frac12+ \frac1p-\frac1{p^2}\Bigr)    {(\partial_y \frakP)^2 \over  \frakP}  
+ \frac{2-p}{p^2} \partial_{yy}^2\frakP  \Bigr\} + \frac1p K \partial_{y}\frakP +\frac1p J \partial_{v}\frakP,
\eean
 where  $\varpi$  is defined by \eqref{def:varpi} and thus uniformly bounded in $p$. 
 Moreover, as
  \bean
(\frac{\partial_y\frakP}{\frakP})^2\sim\frac{1}{y^4}, \
\frac{\partial_y\widetilde\omega}{\widetilde\omega}\frac{\partial_y\frakP}{\frakP}\sim\frac{1}{y^2}, \
\frac{\partial_{yy}\frakP^{1/2}}{\frakP^{1/2}}\sim\frac{1}{y^2}, \
K(\frac12\frac{\partial_{y}\frakP}{\frakP})\sim\frac{1}{y^2}
\eean
and 
$$
\partial_v \frakP =  - \frac\wp2   {(\delta')^2 \over \delta^{1/2}} {  J \over \langle J \rangle^2} + {\phi \over \langle J \rangle},
\quad  \phi \in L^\infty(\OO), 
$$ 
we conclude that 
\bean
\widetilde\varpi -  \frac1p J  \partial_v \frakP
\sim  \varpi    \frakP \sim   -  \frakP \alpha y
\eean
when $y \to \infty$ and any value of $\alpha$ uniformly in $p$. Because  $\langle J \rangle \simeq \langle y \rangle$, the previous estimate gives
\bean
 {d \over dt} \int_\OO f^p \widecheck\omega^p
 +   \int_\OO f^p \widetilde\omega^p \Bigl( {1 \over 2} {1 \over \delta^{1/2}} {J^2 \over \langle y \rangle^2}  + \frac{\alpha}{2}   { \langle y \rangle} \Bigr)
  +   \int_\OO a_* (\partial_y (f\widetilde\omega)^{p/2})^2   \lesssim p \int_\OO f^p \widetilde\omega^p. 
\eean
We conclude by multiplying the above equation by $\varphi^2$ and integrating in the time variable. 
\end{proof}

\begin{lem}\label{lem:gL2estim+}
 Assume $\omega := e^{\alpha y}$, $\alpha > 0$. There exists  
a constant $C = C(a_*,a^*) \in (0,\infty)$ such that 
any solution $f$ to the linear VCk equation   \eqref{eq:VCklinear}, \eqref{eq:LinearVCkBd}   satisfies 
$$
\int_0^T \varphi^2 \int_\OO  (f\widetilde\omega)^p \Bigl[  {1  \over \delta^{1/27}} + \langle y \rangle  \Bigr]  \le  p C
\int_0^T (\varphi \varphi'_+  + \varphi^2) \int_\OO  (f\widetilde\omega)^p, 
$$
for any $\varphi \in C^1_c(0,T)$ and any $p \in [2,\infty)$. 
\end{lem}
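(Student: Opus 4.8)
The plan is to reproduce, at the $L^p$ level, the three-step argument that established Proposition~\ref{prop:estimL2++}, now using Lemma~\ref{lem:Moser-estimL2+} in place of Lemma~\ref{lem:estimL2+}. Observe first that Lemma~\ref{lem:Moser-estimL2+} already provides the $\langle y\rangle$-part of the claimed bound, together with the penalized quantity $(f\widetilde\omega)^p J^2/(\delta^{1/2}\langle y\rangle^2)$ and the Dirichlet-type term $\int_\OO(\partial_y(f\widetilde\omega)^{p/2})^2$, with exactly the factor $p$ on its right-hand side. What remains is to turn the $J^2/(\delta^{1/2}\langle y\rangle^2)$ penalization into the desired $\delta^{-1/27}$ one.

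The main step is therefore to prove the $L^p$-analogue of Lemma~\ref{cor1:L2estim+}:
\[
\int_0^T \varphi^2 \int_\OO (f\widetilde\omega)^p\,\frac{1}{\delta^{1/9}\langle y\rangle^2}\;\lesssim\; p\int_0^T(\varphi\varphi'_+ + \varphi^2)\int_\OO (f\widetilde\omega)^p .
\]
I would follow the proof of Lemma~\ref{cor1:L2estim+} line by line: decompose $\OO=\OOO_1\cup\dots\cup\OOO_5$ exactly as there and treat each piece with the same elementary estimate, now with $(f\widetilde\omega)^p$ playing the role of $f^2\omega_0^2$. On $\OOO_1$ one uses $\delta\gtrsim1$; on $\OOO_2$ the bound $J\ge\tfrac12 v_E y$ gives $J^2/\delta^{1/2}\gtrsim\delta^{-1/9}$; on $\OOO_4$ the existence of $v_F^\ast$ and $J_\ast$ gives $\delta^{-1/9}\lesssim 1+J^2/\delta^{1/2}$; on $\OOO_5$ one combines the Young inequality $\delta^{-1/9}\le J^2/\delta^{1/2}+|J|^{-4/7}$ with \eqref{eq:HardyIneq}; and on $\OOO_3$ one uses $\delta^{-1/9}\le y^{-4/7}$ together with \eqref{eq:HardyIneq} for $\mu=4/7$. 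In every application of \eqref{eq:HardyIneq} the function $h$ is now $(f\widetilde\omega)^{p/2}$, so that $h^2=(f\widetilde\omega)^p$ and $(\partial_y h)^2=(\partial_y(f\widetilde\omega)^{p/2})^2$; this is legitimate since Lemma~\ref{lem:Moser-estimL2+} ensures $\int_0^T\varphi^2\int_\OO(\partial_y(f\widetilde\omega)^{p/2})^2<\infty$ and $\int_0^T\varphi^2\int_\OO(f\widetilde\omega)^p\langle y\rangle<\infty$, hence $(f\widetilde\omega)^{p/2}(t,v,\cdot)\in H^1(0,\infty)$ for a.e.\ $(t,v)$ in the relevant region. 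Summing the five contributions and invoking Lemma~\ref{lem:Moser-estimL2+} to absorb both the penalized and the Dirichlet terms gives the displayed inequality, with a constant that stays linear in $p$ because the only $p$-dependence enters through the right-hand side of Lemma~\ref{lem:Moser-estimL2+}, all geometric and Hardy constants being $p$-independent.

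Finally I would conclude exactly as in the proof of Proposition~\ref{prop:estimL2++}, via the weighted Young inequality
\[
\frac{1}{\delta^{1/27}}=\frac{1}{\delta^{1/27}\langle y\rangle^{2/3}}\,\langle y\rangle^{2/3}\le\frac{1}{\delta^{1/9}\langle y\rangle^2}+\langle y\rangle ,
\]
adding the estimate just obtained to the $\alpha\langle y\rangle$-part supplied directly by Lemma~\ref{lem:Moser-estimL2+} (and dropping the harmless factor $\alpha$). The only point needing attention is the use of the one-dimensional Sobolev/Hardy inequality \eqref{eq:HardyIneq} for the power $(f\widetilde\omega)^{p/2}$ and the check that the constants do not degrade as $p\to\infty$; both are immediate once Lemma~\ref{lem:Moser-estimL2+} is available, so I do not anticipate a genuine difficulty here — this lemma is in essence a $p$-indexed bookkeeping variant of Proposition~\ref{prop:estimL2++}.
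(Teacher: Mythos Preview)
Your proposal is correct and matches the paper's own proof essentially verbatim: the paper simply states that the result follows ``by using Lemma~\ref{lem:Moser-estimL2+} and by following step by step the proof of Lemma~\ref{cor1:L2estim+} and Proposition~\ref{prop:estimL2++}'', which is precisely the route you describe in detail. Your remarks on applying \eqref{eq:HardyIneq} with $h=(f\widetilde\omega)^{p/2}$ and on the $p$-independence of the geometric and Hardy constants are the right justifications for why the argument transfers without loss.
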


\begin{proof}[Proof of Lemma~\ref{lem:gL2estim+}]
The proof is straightforward by using Lemma~\ref{lem:Moser-estimL2+} and by following step by step the proof of Lemma~\ref{cor1:L2estim+} and  Proposition~\ref{prop:estimL2++}. 
\end{proof}

Similarly as in Section~\ref{subseq:GainL2}, for a weight function $\omega := e^{\alpha y}$, $\alpha > 0$, a solution  $f $ to the linear VCk equation \eqref{eq:VCklinear}-\eqref{eq:LinearVCkBd} and an exponent $p \ge 2$, we define 
\beqn\label{def:q-h-barh}
 q := p/2, \quad h := (f\widetilde\omega)^q, \quad \bar h :=h\varphi(t)\chi(v)\omega_0(y), 
\eeqn
with  $ \varphi \in C^1_c((0,T))$, $\chi \in C^1_c((0,v_F))$, $0 \le \varphi, \chi \le 1$, $\omega_0=\frac{1}{\langle y\rangle}$. The function $\bar h$ is  a solution to the modified VCk equation 
 \beqn\label{eq:KFP-interior-barh}
 \partial_t \bar h+ \hat J \partial_v \bar h+   \check K \partial_y \bar h - \fraka \partial_{yy}^2 \bar h + \alpha \langle y \rangle \bar h=  H  -\fraka  \chi \varphi \omega_0 q(q-1)f^{q-2}(\partial_y f)^2\widetilde\omega^q
\eeqn
 with coefficients 
\bean
&&\hat J := J(v,y), \ \hbox{if} \ y > 0, \quad
\hat J  :=  J(v,-y), \ \hbox{if}  \ y < 0, 
\\
&&\check K := K(v,y), \ \hbox{if} \ y > 0, \quad
\check K :=  - K(v,-y), \ \hbox{if}  \ y < 0, 
\eean
and  source term
\bean
H &:=&  
 (\partial_t \varphi) \chi \omega_0  h  
+  \hat J \varphi ( \partial_v \chi) \omega_0 h 
 - q(\partial_v  \hat J)  \varphi   \chi \omega_0 h  
 +   \check  K \varphi \chi ( \partial_y \omega_0)  h 
 - q(\partial_y \check  K)  \varphi   \chi \omega_0 h  
 \\&&
 \qquad
  - \fraka    \varphi    \chi  (\partial_{yy}^2  \omega_0) h 
- 2 \fraka   \varphi    \chi  ( \partial_y  \omega_0 +   q\omega_0\frac{\partial_y\tilde\omega}{\tilde \omega}) (\partial_y  h) 
  + \alpha \langle y \rangle   \varphi   \chi \omega_0 h \\
&&\qquad
 +q(\frac{\partial_v\tilde\omega J}{\tilde \omega}+\frac{\partial_y \tilde\omega K}{\tilde \omega})\varphi   \chi \omega_0 h
+\fraka(q+1)q h (\frac{\partial_y\tilde\omega }{\tilde \omega})^2 \varphi   \chi \omega_0-   \fraka qh \frac{\partial_{yy}^2\tilde\omega }{\tilde \omega} \varphi   \chi \omega_0.
\eean

We note that the associated homogeneous equation is again \eqref{eq:KFP-interior-g}. Moreover, since
\begin{equation}\label{eq:Hbound}    
|H| \lesssim |\partial_t \varphi h| +  |\varphi \partial_v \chi h  | + q^2|\varphi  h  | +  q|\varphi  \partial_y(h )|, 
\end{equation}
and the term $-\fraka q(q-1)f^{q-2}(\partial f)^2\omega^p$ is negative, a result analogous to Proposition~\ref{prop:L2LpInterior-barf} holds for the function $\bar h$. 

\begin{prop}\label{prop:L2LpInterior-barh} 
The   function $\bar h$ defined in \eqref{def:q-h-barh}  satisfies 
\beqn\label{eq-prop:L2LpInterior-barh}
 \| \bar h\|^2_{L^{5/2}(\UU) } \lesssim q^3\|  \chi \|_{W^{1,\infty}}^2 \int_0^T \|   h \|^2_{L^2} (\varphi^2 +  (\varphi')^2)dt , 
\eeqn
for any $T \in (0,1)$, $\varphi \in C^1_c(0,T)$ and $\chi \in C^1_c(0,v_F)$. 
\end{prop}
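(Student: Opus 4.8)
The plan is to run the argument of Proposition~\ref{prop:L2LpInterior-barf} while carefully tracking the exponent $q$. Let $S^0_t$ denote the semigroup on $L^2(\R^2)$ generated by the homogeneous equation \eqref{eq:KFP-interior-g}, which is well defined and positivity preserving (arguing as in Theorem~\ref{theo:existL2}, using Lemma~\ref{lem:LpInterior}). From Lemma~\ref{lem:LpInterior} and Corollary~\ref{cor:L2LinftyInterior} we have $\|S^0_t\|_{L^2\to L^2}\lesssim 1$ and $\|S^0_t\|_{L^2\to L^\infty}\lesssim t^{-2}$ for $t\in(0,1)$, hence by Hölder interpolation $\|S^0_t\|_{L^2\to L^{5/2}}\lesssim t^{-2/5}$ on $(0,1)$.

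Next I would rewrite the right-hand side of \eqref{eq:KFP-interior-barh}. Using $(\partial_y f)\widetilde\omega=\partial_y(f\widetilde\omega)-f\,\partial_y\widetilde\omega$, the extra term $-\fraka\chi\varphi\omega_0\,q(q-1)f^{q-2}(\partial_y f)^2\widetilde\omega^q$ splits into a genuinely nonpositive part $-\fraka\chi\varphi\omega_0\tfrac{q-1}{q}\,h^{-1}(\partial_y h)^2$, which I discard, plus terms of lower order in $q$ whose $O(q^2)$ contribution cancels the $O(q^2)$ term of $H$ (the one coming from $\fraka(q+1)q\,h(\partial_y\widetilde\omega/\widetilde\omega)^2\chi\varphi\omega_0$). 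In this way the source of \eqref{eq:KFP-interior-barh} may be written as $G+N$ with $N\le 0$ and $|G|\lesssim|\partial_t\varphi\,h|+|\varphi\,\partial_v\chi\,h|+q|\varphi h|+q|\varphi\,\partial_y h|$. Since $S^0_t\ge 0$ and $\bar h\ge 0$ (because $f,\varphi,\chi,\omega_0\ge 0$), Duhamel's formula together with $\bar h(0,\cdot)=0$ gives $0\le\bar h_t=\int_0^t S^0_{t-s}(G_s+N_s)\,ds\le\int_0^t S^0_{t-s}G_s\,ds=:u_t$, so that it suffices to bound $u$.

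As in Proposition~\ref{prop:L2LpInterior-barf}, the $L^2\to L^{5/2}$ bound and the Cauchy--Schwarz inequality in the $s$ variable give $\|u_t\|_{L^{5/2}(\R^2)}\lesssim\big(\int_0^t(t-s)^{-4/5}ds\big)^{1/2}\|G\|_{L^2(\UU)}\lesssim t^{1/10}\|G\|_{L^2(\UU)}$; integrating in time and using $T<1$ yields $\|\bar h\|_{L^{5/2}(\UU)}^2\lesssim\|G\|_{L^2(\UU)}^2$. Finally, from the pointwise bound on $G$,
\begin{equation*}
\|G\|_{L^2(\UU)}^2\lesssim\int_0^T\big((\varphi')^2+\|\chi\|_{W^{1,\infty}}^2\varphi^2+q^2\varphi^2\big)\|h\|_{L^2(\OO)}^2\,dt+q^2\int_0^T\varphi^2\|\partial_y h\|_{L^2(\OO)}^2\,dt,
\end{equation*}
and, since $h=(f\widetilde\omega)^q=(f\widetilde\omega)^{p/2}$ with $p=2q\ge 2$, Lemma~\ref{lem:Moser-estimL2+} bounds $\int_0^T\varphi^2\|\partial_y h\|_{L^2(\OO)}^2\,dt$ by $q\int_0^T(\varphi^2+(\varphi')^2)\|h\|_{L^2(\OO)}^2\,dt$ (using $\varphi\varphi'_+\le\varphi^2+(\varphi')^2$). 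Collecting the contributions and using $q\ge 1$ gives \eqref{eq-prop:L2LpInterior-barh}.

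The main obstacle is precisely this $q$-bookkeeping: one must discard the dissipative remainder $N$ via positivity of $S^0_t$ rather than estimate it, exploit the cancellation of the $O(q^2)$ contributions to the source, and replace any crude bound on $\int_0^T\varphi^2\|\partial_y h\|_{L^2}^2$ by the Moser-type boundary estimate of Lemma~\ref{lem:Moser-estimL2+}; doing otherwise would cost extra powers of $q$. Everything else is a verbatim repetition of the proof of Proposition~\ref{prop:L2LpInterior-barf}.
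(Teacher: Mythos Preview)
Your proof is correct and follows essentially the same strategy as the paper: apply Duhamel's formula for \eqref{eq:KFP-interior-barh}, discard the negative dissipative remainder using the positivity of $S^0_t$ together with $\bar h\ge 0$, bound the remaining source in $L^2(\UU)$ via the $L^2\to L^{5/2}$ smoothing of $S^0$, and control $\int_0^T\varphi^2\|\partial_y h\|_{L^2}^2\,dt$ by Lemma~\ref{lem:Moser-estimL2+}.

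Your treatment is in fact slightly sharper than the paper's. The paper simply drops the entire negative term and uses the crude bound \eqref{eq:Hbound}, whose $q^2|\varphi h|$ contribution would yield $\|H\|_{L^2(\UU)}^2\lesssim q^4\int(\varphi^2+(\varphi')^2)\|h\|_{L^2}^2$ rather than the stated $q^3$. You instead split the negative term, keep $N=-\fraka\chi\varphi\omega_0\tfrac{q-1}{q}h^{-1}(\partial_y h)^2\le 0$ to be discarded, and observe that the residual piece $-\fraka q(q-1)(\partial_y\widetilde\omega/\widetilde\omega)^2 h\,\chi\varphi\omega_0$ cancels the $O(q^2)$ term $\fraka(q+1)q(\partial_y\widetilde\omega/\widetilde\omega)^2 h\,\chi\varphi\omega_0$ in $H$ down to $O(q)$. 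This is exactly what is needed to land on $q^3$. For the application to the Moser iteration in Proposition~\ref{prop:Moser} the precise power of $q$ is immaterial (any polynomial growth suffices), but your bookkeeping is the one that matches the statement as written.
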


\begin{proof}[Proof of Proposition~\ref{prop:L2LpInterior-barh}]
    As in Proposition \ref{prop:L2LpInterior-barf}, we may use Duhamel formula to obtain
    $$
    \bar h=\int_0^t S_{t-s}^0H_s ds - q(q-1)\int_0^t S_{t-s}^0 (\fraka f^{q-2}(\partial_y f)^2\omega^q)(s) ds, 
    $$
    for $t\in(0,1)$. 
   We can ignore the second term since it is negative, which allows us to proceed as before to obtain
   $$
   \|\bar h\|_{L^{5/2}(\R^2)}\le t^{1/10}\|H\|_{L^2(U)}.
   $$
 We use then Proposition \ref{lem:gL2estim+} and observation \eqref{eq:Hbound}, to get
 $$\|\bar h\|_{L^{5/2}(\R^2)}\le t^{1/10}q^3\|  \chi \|_{W^{1,\infty}}^2 \int_0^T \|   h \|^2_{L^2} (\varphi^2 +  (\varphi')^2)dt.$$
From where, we deduce 
    $$
    \|\bar h\|_{L^{5/2}(\UU)}\le q^3\|  \chi \|_{W^{1,\infty}}^2 \int_0^T \|   h \|^2_{L^2} (\varphi^2 +  (\varphi')^2)dt (\int_0^T t^{1/4})^{2/5}dt,
    $$
    and then immediately conclude.
\end{proof}

\begin{prop}\label{prop:L2Lrh}  There exists $r > 2$ such that the function $h$ defined in \eqref{def:q-h-barh}  satisfies 
$$
  \| \varphi h \|_{L^r(\UU)}^2   \lesssim  q^3\| (\varphi + |\varphi'|)   h  \|_{L^2(\UU)}^2, 
$$
for any $\varphi \in C^1_c((0,T))$. 
\end{prop}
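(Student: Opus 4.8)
The plan is to repeat, for the function $h$, the three-step interpolation scheme that proves Proposition~\ref{prop:L2Lr}, substituting $h$ for $f\omega$ throughout and tracking the dependence on $q = p/2 \ge 1$; recall that $h^2 = (f\widetilde\omega)^p$, so the weighted $L^p$ penalization estimate of Lemma~\ref{lem:gL2estim+} and the interior gain estimate of Proposition~\ref{prop:L2LpInterior-barh} are precisely the two ingredients playing the roles of \eqref{eq:EstimL2strongBIS} and \eqref{eq:EstimL2LpBIS} in the proof of Proposition~\ref{prop:L2Lr}.

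First I would upgrade the localized bound \eqref{eq-prop:L2LpInterior-barh} to a global gain-of-integrability estimate carrying a power of the boundary distance. Applying \eqref{eq-prop:L2LpInterior-barh} to a dyadic partition of the region near $\{\delta = 0\}$, with cut-offs $\chi_k \in C^1_c((0,v_F))$ satisfying $\|\chi_k\|_{W^{1,\infty}} \simeq 2^k$, and summing over $k$ after inserting a factor $\delta^{2\beta/5}$ on the left to absorb the resulting $4^k$ --- exactly as in the proof of Proposition~\ref{prop:L2LpInterior-barf} and in Step~2 of \cite[Proposition~3.5]{carrapatoso2024kinetic} --- I expect to obtain, for every $\beta > 5/2$,
\beqn\label{eq:EstimL2LpBIS-h}
\int_0^T \varphi^2 \, \| h \, \delta^{2\beta/5} \langle y \rangle^{-1} \|^2_{L^{5/2}(\OO)} \, dt \lesssim q^3 \int_0^T \bigl( \varphi^2 + (\varphi')^2 \bigr) \, \| h \|^2_{L^2(\OO)} \, dt ,
\eeqn
the $h$-analog of \eqref{eq:EstimL2LpBIS}. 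In parallel, I would feed Lemma~\ref{lem:gL2estim+} (with $p = 2q$) into the H\"older interpolation that turns Proposition~\ref{prop:estimL2++} into \eqref{eq:EstimL2strongBIS}: writing $\langle y \rangle^{\theta/2} \delta^{-(1-\theta)/12}$ as the geometric mean of $\langle y \rangle^{1/2}$ and $\delta^{-1/12}$, fixing $\theta = \theta_0$ by $(1/\theta_0 - 1)/6 = 2\beta/5$, and setting $\mu := \theta_0/2$ and $\Delta := \delta^{2\beta/5} \langle y \rangle^{-1}$, this yields
\beqn\label{eq:EstimL2strongBIS-h}
\int_0^T \varphi^2 \, \| h \, \Delta^{-\mu} \|^2_{L^2(\OO)} \, dt \lesssim q \int_0^T \bigl( \varphi^2 + (\varphi')^2 \bigr) \, \| h \|^2_{L^2(\OO)} \, dt ,
\eeqn
the $h$-analog of \eqref{eq:EstimL2strongBIS}.

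Finally I would interpolate \eqref{eq:EstimL2LpBIS-h} and \eqref{eq:EstimL2strongBIS-h} exactly as in \cite[Proposition~3.7]{carrapatoso2024kinetic}: for a suitable $r = r(\mu) \in (2,5/2)$, write $|h|^r = |h\Delta|^{ra} \, |h\Delta^{-\mu}|^{r(1-a)}$ with $a = \mu/(1+\mu)$ chosen so that the powers of $\Delta$ cancel and $1/r = a \cdot \tfrac{2}{5} + (1-a) \cdot \tfrac{1}{2}$, then use H\"older in the $(v,y)$ and in the $t$ variable and insert \eqref{eq:EstimL2LpBIS-h}--\eqref{eq:EstimL2strongBIS-h}; this removes the $\delta$- and $\langle y \rangle$-weights and leaves the announced $L^r$ estimate for $h$. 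The two powers of $q$ combine with interpolation exponents $a$ and $1-a$ summing to $1$, producing $q^{3a + (1-a)} = q^{1+2a} \le q^3$ because $q \ge 1$, which is the claimed bound. The hard part will be this last interpolation step: one must choose $r$ and $a$ so that the positive power of $\delta$ carried by $\Delta$ in the $L^{5/2}$ factor is exactly compensated by the negative power $\Delta^{-\mu}$, which pins down the admissible range of $\beta$ (hence of $\mu$, $a$, $r$); once this is done, checking that the final $q$-exponent does not exceed $3$ is immediate. A second, more routine technical point is the convergence of the dyadic sum behind \eqref{eq:EstimL2LpBIS-h}, which forces a balance between the number of layers, the size $2^k$ of $\| \chi_k' \|_{L^\infty}$, and the power $2\beta/5$ of $\delta$, exactly as in \cite{carrapatoso2024kinetic}.
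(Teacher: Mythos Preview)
Your proposal is correct and follows essentially the same three-step scheme as the paper: a dyadic summation of Proposition~\ref{prop:L2LpInterior-barh} to get the weighted $L^{5/2}$ bound with weight $\Delta=\delta^{2\beta/5}\langle y\rangle^{-1}$ and factor $q^3$, the H\"older interpolation of Lemma~\ref{lem:gL2estim+} to get the $\Delta^{-\mu}$-weighted $L^2$ bound with factor $q$, and then the interpolation of \cite[Proposition~3.7]{carrapatoso2024kinetic} to cancel the $\Delta$-weights and land in $L^r$. The only cosmetic difference is that the paper writes the first estimate directly as a space-time $L^{5/2}(\UU)$ norm (which is what Proposition~\ref{prop:L2LpInterior-barh} delivers) rather than as $L^2_tL^{5/2}_{v,y}$; this does not affect the argument.
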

\begin{proof}[Proof of Proposition~\ref{prop:L2Lrh}] On the one hand, from Proposition~\ref{prop:L2LpInterior-barh} and arguing as during the 
proof of Proposition~\ref{prop:L2LpInterior-barf}, for any $\beta > 5/2 $ and setting  $\Delta :=  \delta^{2\beta/5} / \langle y \rangle$, we have 
   $$
  \| \varphi \Delta h \|_{L^{5/2}(\UU)}^2   \lesssim  q^3\| (\varphi + |\varphi'|)   h  \|_{L^2(\UU)}. 
$$
On the other hand, arguing as during proof of Proposition~\ref{prop:L2Lr}, we may rewrite the conclusion of Lemma~\ref{lem:gL2estim+} as
   $$
  \| \varphi \,\Delta^{-\mu} h \|_{L^{2}(\UU)}^2   \lesssim q \| (\varphi + |\varphi'|)   h  \|_{L^2(\UU)}^2,  
$$
for some suitable $\mu > 0$. 
We cocnclude thanks to an interpolation argument.
 \end{proof}

\begin{prop}\label{prop:Moser}
    Assume $\omega := e^{\alpha y}$, $\alpha > 0$. 
Any solution $f $ to the linear VCk equation  \eqref{eq:VCklinear}-\eqref{eq:LinearVCkBd}  
 satisfies 
  \begin{equation*}
\left\|f_{T}  \omega\right\|_{L^{\infty}} \lesssim T^{-\frac{r}{r-2}} 
\left\|f_{0} \omega\right\|_{L^{2}}, \quad \forall \, T \in (0,1), 
\end{equation*}
where $r > 2$ is defined in Proposition~\ref{prop:L2Lrh}.
\end{prop}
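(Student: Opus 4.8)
The plan is to run a Moser iteration fed by the gain of integrability estimate of Proposition~\ref{prop:L2Lrh}, whose constant is only polynomial in the exponent. By positivity we may assume $f \ge 0$ (otherwise split $f_0 = f_0^+ - f_0^-$ and use positivity preservation). Fix $T \in (0,1)$ and the exponent $r > 2$ provided by Proposition~\ref{prop:L2Lrh}. We introduce the geometric scales
\[
p_k := 2\,(r/2)^k, \qquad q_k := p_k/2 = (r/2)^k, \qquad k \ge 0,
\]
together with the functions $h_k := (f\widetilde\omega)^{q_k}$, so that $h_{k+1} = h_k^{\,r/2}$ because $q_{k+1}/q_k = p_{k+1}/p_k = r/2$. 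We also fix time cut-offs $\varphi_k$ with $0 \le \varphi_k \le 1$, supported in $(\sigma_k, T)$ and equal to $1$ on $(\sigma_{k+1}, T)$, where $\sigma_k := \tfrac T2(1 - 2^{-k}) \nearrow T/2$; such $\varphi_k$ exist with $\|\varphi_k'\|_{L^\infty} \lesssim 2^k/T$.

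Set $\mathcal E_k := \int_{\sigma_k}^T\!\!\int_\OO (f\widetilde\omega)^{p_k} = \|h_k\|_{L^2((\sigma_k,T)\times\OO)}^2$, which is finite for every $k$ by induction (the base case uses $f \in C([0,T];L^2_\omega)$ from Theorem~\ref{theo:existL2}). Applying Proposition~\ref{prop:L2Lrh} with $p = p_k$, $q = q_k$, $\varphi = \varphi_k$, and using that $\varphi_k + |\varphi_k'| \le 1 + C2^k/T$ is supported in $(\sigma_k,T)$, that $\varphi_k \equiv 1$ on $(\sigma_{k+1},T)$, and that $h_k^r = h_{k+1}^2$, one obtains the one-step inequality
\[
\mathcal E_{k+1} = \int_{\sigma_{k+1}}^T\!\!\int_\OO h_k^r \le \|\varphi_k h_k\|_{L^r(\UU)}^r \le \bigl(C\,q_k^3\,(1 + 4^k/T^2)\,\mathcal E_k\bigr)^{r/2}.
\]
Raising this to the power $1/p_{k+1}$ and using $r/(2p_{k+1}) = 1/p_k$, with $N_k := \mathcal E_k^{1/p_k}$ and $D_k := C q_k^3(1+4^k/T^2)$, one gets the recursion $N_{k+1} \le D_k^{1/p_k} N_k$, hence $N_k \le N_0 \prod_{j=0}^{k-1} D_j^{1/p_j}$ for all $k$.

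It remains to sum the constants and track the $T$-dependence. Since $\log D_j = O(1) + 3\log q_j + \log(1 + 4^j/T^2)$ with $\log q_j = j\log(r/2)$ and, for $T<1$, $\log(1+4^j/T^2) \le j\log 4 + 2\log(1/T) + O(1)$, and since $\sum_j 1/p_j = \tfrac12\sum_j(2/r)^j = \tfrac{r}{2(r-2)} < \infty$ and $\sum_j j/p_j < \infty$, the product converges and $\prod_{j\ge 0} D_j^{1/p_j} \le C' T^{-r/(r-2)}$, the $T$-power being exactly $2\sum_j 1/p_j = r/(r-2)$. For the base level, Lemma~\ref{lem:GrowthLp} and $\widetilde\omega \simeq \omega$ give $N_0 = \|f\widetilde\omega\|_{L^2((0,T)\times\OO)} \le (Te^{2\kappa T})^{1/2}\|f_0\widetilde\omega\|_{L^2} \lesssim T^{1/2}\|f_0\omega\|_{L^2}$, so $\sup_k N_k \lesssim T^{-r/(r-2)}\|f_0\omega\|_{L^2}$. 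Since $\sigma_k \le T/2$, we have $\|f\widetilde\omega\|_{L^{p_k}((T/2,T)\times\OO)} \le N_k$; letting $k \to \infty$ and using that $\|u\|_{L^{p_k}} \le M$ along $p_k \to \infty$ forces $|\{\,|u|>M+\varepsilon\,\}| \le (M/(M+\varepsilon))^{p_k} \to 0$, hence $\|u\|_{L^\infty} \le M$ (regardless of the total measure), we get $\|f\widetilde\omega\|_{L^\infty((T/2,T)\times\OO)} \lesssim T^{-r/(r-2)}\|f_0\omega\|_{L^2}$. Finally, since $f \in C([0,\infty);L^2_\omega)$, picking $t_n \uparrow T$ along times where the slice bound holds and passing to the weak-$*$ limit yields $\|f_T\widetilde\omega\|_{L^\infty} \lesssim T^{-r/(r-2)}\|f_0\omega\|_{L^2}$, which is the claim since $\widetilde\omega \simeq \omega$. (If Proposition~\ref{prop:L2Lrh} is to be used only for $\varphi$ compactly supported in the open interval $(0,T)$, one localizes instead to an interior sub-interval $(\theta_0 T,\theta_1 T)$ with $0<\theta_0<\theta_1<1$, obtaining the $L^\infty$ bound there, and then propagates to $t=T$ via the $L^\infty$ growth estimate of Lemma~\ref{lem:GrowthLp}.)

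The essential difficulty — the polynomial-in-$q$ control of the gain of integrability — is already absorbed into Proposition~\ref{prop:L2Lrh}; given it, the remaining work is bookkeeping: checking that the iteration constants stay summable after the $1/p_k$ powers are taken, collecting the negative powers of $T$, and performing the (routine) passage from a space-time $L^\infty$ bound on $(T/2,T)\times\OO$ to a bound on the time slice $\{\,t = T\,\}$.
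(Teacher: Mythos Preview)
Your proof is correct and follows essentially the same Moser iteration scheme as the paper: the same geometric exponents $p_k = 2(r/2)^k$, the same nested time intervals shrinking to $(T/2,T)$, the same one-step use of Proposition~\ref{prop:L2Lrh} yielding the recursion $N_{k+1} \le D_k^{1/p_k} N_k$, and the same summation of $\sum 1/p_k$ and $\sum k/p_k$ to control the product of constants and extract the $T^{-r/(r-2)}$ factor. You are actually slightly more careful than the paper on two minor points---the compact-support requirement on $\varphi$ in Proposition~\ref{prop:L2Lrh} (which the paper silently ignores) and the passage from the space-time $L^\infty$ bound on $(T/2,T)\times\OO$ to the slice $t=T$---but these are cosmetic differences, not a genuinely different route.
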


\begin{proof}[Proof of Proposition~\ref{prop:Moser}]
We follow Moser's iterative method and thus  introduce the following sequences
$$
t_k :=\frac{T}{2}-\frac{T}{2^k}, \quad\UU_k: =(t_k,T]\times \OO, \quad p_k: =(r/2)^k2,
$$
where $r$ is defined in Proposition~\ref{prop:L2Lrh} and $T\in(0,1]$. 
 
 \smallskip\noindent
 {\sl Step 1.} In Proposition \ref{prop:L2Lrh}, we take $q=p_k/2$ and $\varphi \in C^1_c(0,1)$ such that $ {\bf 1}_{(t_{k+1},T)} \le \varphi \le {\bf 1}_{(t_k,T)}$
 and  $|\varphi^\prime|\le 2 \frac{1}{t_{k+1}-t_k}$. 
 We obtain then 
   \begin{equation*}
       \ \| (f\widetilde\omega)^{p_k/2}  \|_{L^r(\UU_{k+1})}^2  \lesssim p_k^3 \frac{1}{(t_{k+1}-t_k)^2}  \|  (f\widetilde\omega)^{p_k/2}  \|_{L^2(\UU_k)}^2.
   \end{equation*}
   Noticing that $p_{k+1}=p_k\frac{r}{2}$, we may rewrite equivalently the above estimate as 
   \beqn\label{eq:propMoser-step1}
\|f\widetilde\omega\|_{L^{p_{k+1}}(\UU_{k+1})}\le \left(C p_k^3 \frac{2^{2k}} { T^2} \right)^{\frac{1}{p_k}}   \|f\widetilde\omega\|_{L^{p_k}(\UU_{k})}.
\eeqn

    \smallskip\noindent
 {\sl Step 2.} 
 Observing that
$$
\sum_{k=1}^{\infty} \frac{1}{p_{k}}=\frac{1}{2} \sum_{j=0}^{\infty} \left(\frac{2}{r}\right)^j=\frac{r}{r-2},
$$
$$
\sum_{k=1}^{\infty} \frac{k}{p_{k}}=\frac{1}{2} \sum_{j=0}^{\infty} j\left(\frac{2}{r}\right)^j=c_1<\infty,
$$
and
$$
\prod_{k=1}^{\infty} p_j^{3/p_j}=e^{3\sum_{k=1}^{\infty} \frac{\log p_j}{p_{j}}}=e^{3\log{( \frac{r}{2})}\sum_{k=1}^{\infty} j(\frac{2}{r})^j}=e^{3\log{( \frac{r}{2})}c_1},
$$
we deduce that
$$
\prod_{j=1}^{k}\left(Cp_j^3 \frac{2^{2j}}{T^2}\right)^{1 / p_{j}}\lesssim T^{-2\frac{r}{r-2}}. 
$$
As a consequence, we have 
 $$
\begin{aligned}
\|f \widetilde\omega\|_{L^{\infty}\left(\mathcal{U}_{\infty}\right)} 
& \leq \liminf _{k \rightarrow \infty}\|f  \widetilde\omega\|_{L^{p_{k}} (\mathcal{U}_{k})}  \\
& \leq \liminf _{k \rightarrow \infty} \prod_{j=1}^{k}\left(Cp_j^3 \frac{2^{2j}}{T}\right)^{1 / p_{j}}\|f  \widetilde\omega\|_{L^{p_{1}}\left(\mathcal{U}_{1}\right)} \\
& \lesssim T^{-2\frac{r}{r-2}}\|f \widetilde\omega\|_{L^{2}\left(\mathcal{U}_{1}\right)}.
\end{aligned}
$$
We conclude by using Lemma \ref{lem:GrowthLp} with $p=2$ in order to bound the RHS term. 
\end{proof}

We finally come to the 
\begin{proof}[Proof of Theorem~\ref{theo-Ultra}]
We may reduce the proof to the  case $s=0$ by using a mere time translation. From \eqref{eq:corL1Lr} and Proposition~\ref{prop:Moser}, we deduce that 
\beqn\label{eq:corL1Linfty}
  \| f(t,\cdot) \|_{L^\infty_\omega(\OO)}  \lesssim    {1 \over t^{\nu} } \| f_0 \|_{L^1_\omega(\OO)} , \quad \forall \, t \in (0,1), 
\eeqn
with $\nu := \nu_1 + 2r/(r-2)$. 
Estimate \eqref{eq:theo-Ultra} is then just a combination of \eqref{eq:corL1Linfty} used on the time interval $(0,1)$ and the estimate
\eqref{eq:lem:GrowthLp} for $p=\infty$ used on the time interval $[1,\infty)$. 
\end{proof}

\Black
 
 \section{Interior Holder, Harnack and compacteness estimates} 
  \label{sec:Holder&compact}

We briefly explain how we can establsih that any solution $f$ to the linear kinetic equation \eqref{eq:VCklinear} enjoys some further regularity estimates.

 \begin{theo} \label{theo-L1Calpha-f}
 For any $0 < t_0 < t_1 < \infty$ and any relatively compact region $O \subset\subset \OO$, there exist two constants $C,\beta > 0$ such that 
 any solution $f$  to the linear  Voltage-Conductance kinetic equation  \eqref{eq:VCklinear}, \eqref{eq:LinearVCkBd} satisfies 
 $$
 \| f \|_{C^\beta((t_0,t_1) \times O)} \le C \| f_0 \|_{L^1_{\omega_1}}.
 $$
 \end{theo}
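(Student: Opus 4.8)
The plan is to combine the smoothing estimates already obtained with the interior De Giorgi--Nash--Moser theory for kinetic Fokker--Planck equations of Kolmogorov type. First I would reduce the assertion to a purely interior statement. By Lemma~\ref{lem:GrowthLp} and the ultracontractivity estimate of Theorem~\ref{theo-Ultra} (applied on the time interval $(0,t_0/2)$), any solution satisfies $\sup_{t\in[t_0/2,t_1]}\|f(t,\cdot)\|_{L^\infty_\omega}\lesssim\|f_0\|_{L^1_{\omega_1}}$ for a suitable admissible weight $\omega$. Since $O\subset\subset\OO$, the weight $\omega$ is bounded above and below on a neighbourhood of $O$, and the boundary conditions \eqref{eq:LinearVCkBd} are inactive there. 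Hence it suffices to prove: if $\OO'$ is open with $\bar O\subset\OO'\subset\subset\OO$ and $f\in L^\infty((t_0/2,t_1)\times\OO')$ solves \eqref{eq:VCklinear} in the distributional sense (which is guaranteed by the renormalized formulation of Theorem~\ref{theo:existL2}), then $\|f\|_{C^\beta((t_0,t_1)\times O)}\lesssim\|f\|_{L^\infty((t_0/2,t_1)\times\OO')}$.

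Next I would localise. Picking a cutoff $\zeta\in C_c^\infty((t_0/2,t_1)\times\OO')$ with $\zeta\equiv 1$ on $(t_0,t_1)\times O$, the function $g:=\zeta f$ solves, on all of $(0,\infty)\times\R^2$,
$$
\partial_t g+\partial_v(Jg)+\partial_y(Kg)-\partial_y(\fraka\,\partial_y g)=S,
$$
with source $S\in L^\infty$ supported in $\operatorname{supp}\zeta$ (produced by the commutators with $\zeta$ together with the bound $f\in L^\infty$), smooth transport coefficients $J,K$ with bounded derivatives on $\OO'$, and a diffusion coefficient which is merely measurable in $t$ but uniformly elliptic, $a_*\le\fraka\le a^*$. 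This is a divergence-form, variable-coefficient Kolmogorov equation in which $y$ plays the role of the velocity variable, $v$ of the position variable, $t$ of time. The decisive structural fact is that the strong H\"ormander condition holds uniformly on $\OO'$: since $\partial_y J=v_E-v\ge v_E-v_F>0$ on $\OO'$, the bracket $[\,\partial_{yy}^2,J\partial_v\,]$ recovers a nondegenerate multiple of $\partial_v$, so the operator is uniformly hypoelliptic with the standard Kolmogorov scaling.

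I would then invoke the interior De Giorgi--Nash--Moser theory for such kinetic equations with measurable diffusion coefficients: local boundedness is already at hand from the reduction step, and the associated weak Harnack inequality yields a geometric decay of the oscillation of $g$ over the kinetic cylinders adapted to the Kolmogorov scaling, hence a quantitative $C^\beta$ estimate of $g$ on $(t_0,t_1)\times O$ controlled by $\|g\|_{L^\infty}$ --- this is exactly the scheme carried out for the KFP equation in \cite{carrapatoso2024kinetic,CGMM**}, in the spirit of \cite{MR2068847,zbMATH07050183}. Composing with the reduction step gives the claimed bound by $\|f_0\|_{L^1_{\omega_1}}$. The only non-routine point, and the main obstacle, is that $J,K$ are not the constant coefficients of the model Kolmogorov operator: one must either appeal to a variable-coefficient version of the kinetic De Giorgi--Nash--Moser theory, or, at each point of $\OO'$, freeze the coefficients through a locally bi-Lipschitz, scaling-compatible change of variables into Kolmogorov coordinates and absorb the remainder into $S$ and into a perturbation of the elliptic coefficient. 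Verifying the uniform nondegeneracy of the bracket and the compatibility of the cutoff with the kinetic cylinders requires some care but introduces no new idea.
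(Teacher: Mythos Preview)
Your outline is essentially the paper's approach: reduce to an interior $L^\infty$ bound via ultracontractivity, localize, and then appeal to the De Giorgi--Nash--Moser/Harnack theory for kinetic Fokker--Planck equations with rough diffusion, citing \cite{zbMATH07050183}. The one place where the paper is sharper is the ``main obstacle'' you flag at the end. Rather than freezing coefficients locally, the paper exhibits a single global change of variables
\[
w=\phi(v):=-\log(v_E-v),\qquad z=y+\psi(v):=y-\frac{y_Lv}{v_E-v},
\]
under which $J\partial_v$ becomes exactly $z\partial_w$ and $\partial_y$ becomes $\partial_z$, so the equation is brought to the standard Kolmogorov form $\partial_t\bar g+z\partial_w\bar g+Q\partial_z\bar g-\fraka\partial_{zz}^2\bar g=\mathfrak G$ on all of the localized region, with a bounded lower-order drift $Q$. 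This avoids any patching of local charts and makes the invocation of \cite[Theorem~1.3]{zbMATH07050183} immediate. A minor imprecision in your sketch: the commutator of the diffusion with the cutoff produces a term proportional to $\fraka(\partial_y\zeta)\partial_y f$, which is a priori only $L^2$, not $L^\infty$; the paper absorbs this by showing $\partial_z\bar g\in L^2$ via a mollification/energy argument before applying the cited H\"older estimate, and you should do the same.
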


\smallskip\noindent
{\sl Proof of Theorem~\ref{theo-L1Calpha-f}.}  
Introducing the localization $\bar f$  as in \eqref{eq:KFP-interior-barf}, we write
\beqn\label{eq:VCkBIS}
\partial_t \bar f  + J \partial_v \bar f  + K \partial_y\bar f -   \fraka \partial_{yy}^2 \bar  f = F - \alpha \langle y \rangle \bar f  
=: {\frak F}
\quad\text{in}\quad (0,\infty) \times \OO. 
\eeqn
We introduce the change of variables
$$
\bar g(t,w,z) = \bar f(t,v,y), \quad  {\frak G}(t,w,z) =  {\frak F} (t,v,y),
$$
with $w=w(v,y)$ and $z=z(v,y)$. Assuming 
\beqn\label{eq:z&wOFy}
\partial_y z = 1, \quad \partial_y w = 0,
\eeqn
we straightforwardly obtain 
\bean
 {\frak G} 
 &=& \partial_t \bar f  + J \partial_v \bar f  + K \partial_y \bar f - \fraka \partial_{yy}^2  \bar f
\\
 &=& \partial_t \bar g   + J \partial_vw \partial_w \bar g + J \partial_vz \partial_z \bar g  +   K   \partial_z \bar g   - \fraka \partial_{zz}^2 \bar g.
\eean
The previous conditions on $w$ and $z$  are equivalent to   
\beqn\label{eq:defwz}
w := \phi(v), \quad z := y + \psi(v), 
\eeqn
and we look for $\phi$ and $\psi$ such that 
$$
J \partial_vw = z, 
$$
or in other words
$$
(y (v_E-v) - y_L v) \phi'(v) = y + \psi(v).
$$
A convenient choice is 
\beqn\label{eq:defphipsi}
\phi(v) := - \log (v_E-v), \quad \psi(v) := -  {y_L v \over v_E-v}. 
\eeqn
As a conclusion, with this change of variables, the function $\bar g$ is a solution to 
\beqn\label{eq:VCkTER}
\partial_t \bar g  + z \partial_w \bar g + Q \partial_z \bar g  - \fraka \partial_{zz}^2 \bar g  = {\frak G}, 
\eeqn
with $Q := J \psi' + K$. 
Because of the compact support condition on $\bar g$, we may for instance understand 
the equation on $\R_+ \times \R^2$ by defining $Q \in W^{1,\infty}(\R^2)$ arbitrarily outside of the support of $\bar g$. 
By definition and using Theorem~\ref{theo-Ultra}, we have $\bar g, {\frak G} \in L^\infty(\R_+ \times \R^2)$. 

\smallskip
We introduce a mollifier $(\rho_\eps)$ in $\DD(\R^2)$ and the regularization sequence $\bar g_\eps := \bar g *_{wz} \rho_\eps$ which satisfies 
$$
\partial_t \bar g_\eps  + z \partial_w \bar g_\eps + Q \partial_z \bar g_\eps  - \fraka \partial_{zz}^2 \bar g_\eps  = {\frak G}_\eps, 
$$
with ${\frak G}_\eps := {\frak G} *  \rho_\eps + [\rho_\eps,z \partial_w+Q \partial_z ] \bar g$. Using \cite[Lemma~II.1]{MR1022305}, we have ${\frak G}_\eps \to {\frak G}$ in any $L^p(\R^2)$ space, 
$1 \le p < \infty$. We may multiply the above equation by $ \bar g_\eps$ and integrate in all variables. Using the Green formula and the fact that the boundary term vanishes because of the compact
support of $\bar g_\eps$, we get 
$$
\int_\UU (\partial_{z} \bar g_\eps)^2 = \int_\UU [ \frac 12 \bar g_\eps^2  \partial_z Q  + \bar g_\eps  {\frak G}_\eps] \lesssim \| \bar g \|^2_{L^2(\UU)} + \| {\frak G} \|_{L^2(\UU)}^2.
$$
Passing to the limit, we deduce $\partial_z \bar g \in L^2(\UU)$. 
 We may thus apply  \cite[Theorem~1.3]{zbMATH07050183}, from which we immediately conclude that $\bar g \in C^\beta$, for some $\beta \in (0,1)$,  so that  $\bar f$ and $f$ are also $C^\beta$ functions.
 \qed

\bigskip

 Using the same change of variables as above and the Harnack inequality for the standard kinetic Fokker-Planck  equation as developped in  \cite{zbMATH07050183,MR4181953,MR4412380,CGMM**}, we deduce the Harnack inequality for the VCk equation  \eqref{eq:VCklinear}.

 \begin{theo}\label{theo:Harnack}
 For $\eps \in (0,1)$, we define $\OO_\eps := \{ (v,y) \in \OO; \ \delta(v) > \eps, \ y < 1/\eps\}$. 
For every $T > T_0>0$,  and $\eps>0$, there exists $C$ such that any solution $ 0 \le f \in L^\infty(\UU)$ to the linear  VCk equation  \eqref{eq:VCklinear}  satisfies 
 \beqn\label{eq:Harnack}
  \sup_{\OO_\eps} f_{T_0} \le C  \inf_{\OO_{\eps}} f_{T}. 
\eeqn
  \end{theo}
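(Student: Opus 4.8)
The idea is to transport \eqref{eq:VCklinear} into a standard kinetic Fokker--Planck (KFP) equation on a fixed region that stays uniformly away from the characteristic parts $\Sigma_1\cup\Sigma_2$ of $\partial\OO$ --- automatic here, since $\OO_\eps$ confines $v$ to the fixed compact interval $[\eps,v_F-\eps]$ --- and touches at most the conormal boundary $\Sigma_0$; there the interior Harnack inequality for KFP of \cite{zbMATH07050183,MR4181953,MR4412380} and its up-to-the-conormal-boundary version of \cite{CGMM**} apply, and one is left to chain them. Fix $T>T_0>0$ and $\eps>0$ with $\OO_\eps\neq\emptyset$, set $\eps':=\eps/2$, and observe that on $(T_0/2,\infty)\times\OO_{\eps'}$ the solution $f$ satisfies \eqref{eq:VCklinear} together with the conormal condition \eqref{eq:VCktBd1-linear} on the part of $\Sigma_0$ lying over $v\in(\eps',v_F-\eps')$, no other boundary piece meeting $\overline{\OO_{\eps'}}$. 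Applying the $C^\infty$ diffeomorphism $(v,y)\mapsto(w,z)$, $w:=\phi(v)=-\log(v_E-v)$, $z:=y+\psi(v)=y-y_L v/(v_E-v)$ of \eqref{eq:defwz}--\eqref{eq:defphipsi} --- bi-Lipschitz on $\OO_{\eps'}$ with constants depending only on $\eps',v_E,v_F,y_L$, because $v$ stays away from $v_E$ and $y$ stays bounded --- one obtains, exactly as in the proof of Theorem~\ref{theo-L1Calpha-f}, that $\bar g(t,w,z):=f(t,v,y)$ is a nonnegative bounded weak solution of
\[
\partial_t\bar g+z\,\partial_w\bar g+Q\,\partial_z\bar g-\fraka\,\partial_{zz}^2\bar g=0,\qquad Q:=J\psi'+K\in L^\infty,\quad a_*\le\fraka\le a^*,
\]
on the transported domain; since $\partial_y=\partial_z$, the condition \eqref{eq:VCktBd1-linear} becomes the Robin-type conormal condition $\fraka\,\partial_z\bar g=K\bar g$ on the image of $\Sigma_0$, which is a smooth curve transverse to $\partial_z$ and may be further flattened by a harmless local change of variables.

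Next I would invoke the De Giorgi--Nash--Moser theory for the model equation $\partial_t\bar g+z\,\partial_w\bar g+Q\,\partial_z\bar g-\fraka\,\partial_{zz}^2\bar g=0$ with bounded measurable coefficients, in its interior form \cite{zbMATH07050183,MR4181953,MR4412380} and in its up-to-the-conormal-boundary form \cite{CGMM**}: it provides a Harnack inequality on unit kinetic cylinders, i.e.\ a universal constant $C_0\ge1$ and past/future cylinders $Q_r^-,Q_r^+$, adapted to the parabolic kinetic dilations and to the Galilean shifts $(t,w,z)\mapsto(t,w+tz_0,z+z_0)$, such that every nonnegative weak solution on $Q_r$, respectively on a half-cylinder abutting the conormal boundary, satisfies $\sup_{Q_r^-}\bar g\le C_0\inf_{Q_r^+}\bar g$. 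Since the coefficient class $\{Q\in L^\infty,\ a_*\le\fraka\le a^*\}$ is preserved by these symmetries and by time translation, this gives a local Harnack inequality with the same $C_0$ at every space-time point of the transported domain, interior or on the conormal boundary, and every scale $r$ small compared to the distances to the lateral boundary and to $\{t=T_0/2\}$.

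Finally I would run a Harnack chain. Let $\widetilde{\mathcal K}$ be the compact connected image of $\overline{\OO_\eps}$; it lies in the transported domain together with a compact piece of the image of $\Sigma_0$, and is separated from the lateral boundary and from $\{t=T_0/2\}$ by distances bounded below in terms of $\eps,T_0$. Given $\bar x_0,\bar x_1\in\widetilde{\mathcal K}$, connect $(T_0,\bar x_0)$ to $(T,\bar x_1)$ by a finite chain of overlapping Harnack cylinders, alternating short diffusive displacements in $z$ and $t$ with transport displacements in $w$ generated by the drift $z\,\partial_w$, using the interior version away from $\Sigma_0$ and the boundary version on the part of $\widetilde{\mathcal K}$ meeting the image of $\Sigma_0$; since the spatial region is relatively compact and the time gap $T-T_0$ is fixed, the number of cylinders is bounded by some $N=N(\eps,T_0,T)$, uniformly in $\bar x_0,\bar x_1$. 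Composing the $N$ local Harnack inequalities yields $\bar g(T_0,\bar x_0)\le C_0^{\,N}\bar g(T,\bar x_1)$, and taking the supremum over $\bar x_0$, the infimum over $\bar x_1$ and undoing the change of variables gives \eqref{eq:Harnack} with $C:=C_0^{\,N}$. The main obstacle is the conormal boundary $\Sigma_0$: because $\overline{\OO_\eps}$ reaches $\{y=0\}$ one cannot remain in the interior regime, so one needs the up-to-the-boundary Harnack estimate for the degenerate-parabolic KFP equation with a Robin/conormal condition (hence the use of \cite{CGMM**}); one must also check that the change of variables preserves the conormal character of $\Sigma_0$, which it does since $\partial_y w=0$ and $\partial_y z=1$ map $\partial_y$ to $\partial_z$, and that the chain respects the non-Euclidean kinetic geometry, which forces the alternation of $z$- and $w$-displacements.
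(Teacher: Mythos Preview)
Your approach is essentially the paper's: perform the change of variables $(v,y)\mapsto(w,z)$ of \eqref{eq:defwz}--\eqref{eq:defphipsi} to reduce to a standard KFP equation and then invoke the Harnack inequality of \cite{CGMM**}; the paper simply cites \cite[Theorem~6.1]{CGMM**} as a black box on the transported set $O_\eps$, whereas you unpack that black box into local Harnack plus a chain, and you are right to flag the conormal piece $\Sigma_0$ (indeed $\overline{\OO_\eps}$ meets $\{y=0\}$, so the paper's phrase ``relatively compact in $O$'' is imprecise and the up-to-the-boundary version from \cite{CGMM**} is what is actually being used). One small correction: since \eqref{eq:VCklinear} is written in divergence form, setting $g(t,w,z):=f(t,v,y)$ without any localization produces an additional bounded zeroth-order term $A\,g$ with $A=\partial_vJ+\partial_yK$, exactly as in the paper's proof; this is harmless for your argument because the Harnack inequalities of \cite{zbMATH07050183,CGMM**} accommodate bounded lower-order coefficients.
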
 
  
  \begin{proof}[Proof of Theorem~\ref{theo:Harnack}] We define $g(t,w,z) =   f(t,v,y)$ with the previous change of variables \eqref{eq:defwz} and \eqref{eq:defphipsi}. The function $g$ thus  satisfies 
  $$
  \partial_t g  + z \partial_w g   - \fraka \partial_{zz}^2 g + Q \partial_z g  + A g = 0 
$$
on $U := (0,\infty) \times O$, with $Q$ defined during the proof of Theorem~\ref{theo-L1Calpha-f},  $A := \partial_v J + \partial_y K$ 
and $O := \Xi(\OO)$,  $\Xi(v,y) := (\phi(v),y+\psi(v))$.  Because $\Xi$ is a homeomorphism, the set $O_\eps := \Xi(\OO_\eps)$ is relatively compact in $O$. 
We may thus use the Harnack inequality established in \cite[Theorem~6.1]{CGMM**} which tells us that there exists a cosntant $C = C(\eps,T,T_0)$ such that 
$$
  \sup_{O_\eps}  g_{T_0} \le C  \inf_{O_{\eps}} g_{T}, 
$$
from what \eqref{eq:Harnack} immediately follows. 
 \end{proof}

 We conclude this section by formulating the following compactness result.

 \begin{theo} \label{theo-compactness}
 Consider a sequence $(f_n)$ bounded in $L^\infty_\omega(\UU) \cap \HH_\omega$ such that 
\bean
&& \partial_t f_n + \partial_v(Jf_n) + \partial_y(K_n f_n) - \fraka_n \partial^2_{yy} f_n = 0  \ \text{ in } \ \UU, 
\\
&& \RRR_{\fraka_n,K_n} \gamma f_n = 0 \ \text{ on }\  \Gamma , \quad f_n(0) = f_{0n} \ \text{ in } \ \OO, 
\eean
with $K_n := \frakb_n - y$,  $(\fraka_n)$ and $(\frakb_n)$ bounded in $L^\infty(0,T)$, $\fraka_n \ge a_*$, $\frakb_n \ge y_*$  and $f_{0n} \to f_0$ in $L^1(\OO)$. 
There exist $f \in L^\infty_\omega(\UU) \cap \HH_\omega$ and $\fraka,\frakb \in L^\infty(0,T)$ 
such that, up to the extraction of a subsequence, $f_n \to f$ strongly in $L^1(\UU)$,
$\fraka_n \wto \fraka$  and $\frakb_n \wto \frakb$ weakly in $L^\infty(0,T)$ 
and 
\bean
&& \partial_t f + \partial_y(Jf) + \partial_y(K f) - \fraka \partial^2_{yy} f = 0  \ \text{ in } \ \UU, 
\\
&& \RRR_{\fraka,K} \gamma f = 0 \ \text{ on }\  \Gamma , \quad f(0) = f_{0} \ \text{ in } \ \OO, 
\eean
in the variational sense with  $K :=  \frakb - y$.
 \end{theo}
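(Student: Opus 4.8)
The plan is to (i) extract weak limits, (ii) upgrade to strong $L^1(\UU)$ convergence by combining the interior H\"older estimate of Theorem~\ref{theo-L1Calpha-f} with the weighted $L^\infty$ bound, and (iii) pass to the limit in the variational formulation \eqref{eq:var}. The only genuinely delicate point will be (ii); secondarily, some care is needed to pass to the limit in the two products $\fraka_n\partial_y f_n$ and $\frakb_n f_n$ of sequences that converge only weakly.

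\emph{Weak limits.} Since $(f_n)$ is bounded in $L^\infty_\omega(\UU)\cap\HH_\omega$ and $(\fraka_n),(\frakb_n)$ are bounded in $L^\infty(0,T)$, I would extract, along a subsequence (not relabelled), $f_n\wto f$ weakly-$*$ in $L^\infty_\omega(\UU)$ and weakly in $\HH_\omega$, together with $\fraka_n\wto\fraka$ and $\frakb_n\wto\frakb$ weakly-$*$ in $L^\infty(0,T)$. Testing $\fraka_n\ge a_*$ and $\frakb_n\ge y_*$ against nonnegative functions of $L^1(0,T)$ and passing to the limit yields $\fraka\ge a_*$, $\frakb\ge y_*$, so that $K:=\frakb-y$ is an admissible coefficient and $f\in L^\infty_\omega(\UU)\cap\HH_\omega$. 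Applying moreover the trace estimates \eqref{eq:AprioriBord1}--\eqref{eq:AprioriBord3} of (the proof of) Theorem~\ref{theo:existL2} to each $f_n$, the traces $(\gamma f_n)$ are bounded, locally in time, in $L^2$ on $\Gamma^0_{12}$ (w.r.t. $J\,dydt$) and on $\Gamma^{v_F}_{12}$ (w.r.t. $J^2\,dydt$), so that $\gamma f_n\wto\gamma f$ weakly there, up to a further extraction.

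\emph{Strong $L^1$ convergence.} Given $\eps\in(0,1)$, decompose $\UU=A_\eps\cup B_\eps$ with $A_\eps:=(\eps,T-\eps)\times O_\eps$, $O_\eps:=\{(v,y)\in\OO;\ \delta(v)>\eps,\ \eps<y<1/\eps\}\subset\subset\OO$, and $B_\eps:=\UU\setminus A_\eps$. On $B_\eps$ the pointwise bound $|f_n|\le\|f_n\|_{L^\infty_\omega}\,\omega^{-1}$ together with the integrability of $\omega^{-1}$ for admissible $\omega$ gives $\int_{B_\eps}|f_n|\le C\,\eta(\eps)$ with $\eta(\eps)\to0$ as $\eps\to0$, uniformly in $n$ --- the contributions of $\{\delta(v)\le\eps\}$, of $\{y\le\eps\}\cup\{y\ge1/\eps\}$ and of $\{t\le\eps\}\cup\{t\ge T-\eps\}$ being controlled respectively by the $v$-width of the boundary strips, by the $y$-tail of $\omega^{-1}$, and by the length of the time slabs. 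On $A_\eps$, re-examining the proof of Theorem~\ref{theo-L1Calpha-f} --- which, after localisation and the change of variables to a standard kinetic Fokker--Planck form, invokes \cite[Theorem~1.3]{zbMATH07050183} with data controlled in terms of $\|f_n\|_{L^\infty_\omega(\UU)\cap\HH_\omega}$ --- shows that $\|f_n\|_{C^\beta(\bar A_\eps)}\le C(\eps)$ uniformly in $n$; by Arzel\`a--Ascoli and a diagonal extraction over a sequence $\eps_j\downarrow0$, $f_n\to f$ uniformly on each $A_{\eps_j}$, hence a.e. on $\UU$. Combining the two regions, $f_n\to f$ strongly in $L^1(\UU)$.

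\emph{Passage to the limit.} I would then pass to the limit in the variational formulation \eqref{eq:var} written for $f_n$, against a fixed $\psi\in C^1_c(\UU\cup\Gamma_-)$. The terms linear in $f_n$, namely $\int_\UU f_n(-\partial_t-J\partial_v)\psi$, the flux boundary term $\int_0^T\!\int_{y_F}^\infty (J\gamma f_n)(t,v_F,y)\psi(t,0,y)$, and $\int_\OO f_{0n}\psi(0,\cdot)$, converge by the strong $L^1$ convergence of $f_n$, the weak convergence of $\gamma f_n$, and $f_{0n}\to f_0$ in $L^1(\OO)$, using that $\psi$ is bounded with bounded $y$-support. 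Splitting $K_n=\frakb_n-y$, the $y$-part of $\int_\UU K_nf_n\partial_y\psi$ is linear in $f_n$ and passes to the limit, while $\int_\UU\frakb_nf_n\partial_y\psi=\int_0^T\frakb_n(t)\big(\int_\OO f_n\partial_y\psi\big)dt$ converges because the inner integral converges strongly in $L^1(0,T)$ while $\frakb_n\wto\frakb$ weakly-$*$ in $L^\infty(0,T)$; similarly $\int_\UU\fraka_n(\partial_yf_n)\partial_y\psi$ is first rewritten, by an integration by parts in $y$ using the no-flux relation $\fraka_n\partial_yf_n=K_nf_n$ on $\Gamma_0$, as a sum $\int_\UU\fraka_nf_n\partial_{yy}^2\psi+\int_{\Gamma_0}(\frakb_n-y)\gamma f_n\,\psi$ of terms of the same ``weakly-$*$ convergent in $L^\infty_t$ times strongly $L^1$-convergent'' form, hence convergent. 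This proves that $f$ solves \eqref{eq:var} with $\fraka$ and $K=\frakb-y$; the boundary conditions $\gamma f=0$ on $\Gamma_1$, the flux periodicity on $\{v=v_F\}$ and the no-flux condition on $\Gamma_0$ pass to the limit by stability of the linear weak formulation and by the weak convergence of the traces, and $f(0)=f_0$ follows as in Step~5 of the proof of Theorem~\ref{theo:existL2}. The main obstacle is the strong $L^1$ convergence, and its resolution rests on the observation that the uniform $L^\infty_\omega(\UU)$ bound is exactly what makes all the non-interior regions uniformly negligible, so that the interior hypoelliptic H\"older regularity of Theorem~\ref{theo-L1Calpha-f} is enough to obtain compactness.
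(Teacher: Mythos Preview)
Your overall architecture---extract weak limits, upgrade to strong $L^1(\UU)$ via the interior $C^\beta$ estimate plus the $L^\infty_\omega$ tail control, then pass to the limit in the variational formulation---matches the paper's proof. Your Step~(ii) is in fact more carefully written than the paper's Step~1, which simply invokes Ascoli and a diagonal extraction; your explicit $A_\eps\cup B_\eps$ decomposition is the right way to make that precise.

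The gap is in your handling of the diffusion term $\int_\UU\fraka_n(\partial_y f_n)\partial_y\psi$. First, the integration-by-parts identity you wrote is incorrect: moving the $y$-derivative onto $\psi$ gives a volume term $-\int_\UU\fraka_n f_n\,\partial^2_{yy}\psi$ together with a boundary term $-\int_{\Gamma_0}\fraka_n(\gamma f_n)\,\partial_y\psi$, whereas moving it onto $f_n$ and using the no-flux relation gives the boundary term $-\int_{\Gamma_0}K_n(\gamma f_n)\,\psi$ but then the volume term $-\int_\UU\fraka_n(\partial^2_{yy}f_n)\psi$, which you do not control. You have combined the volume term of one option with the boundary term of the other. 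Second, and more seriously, in either correct version the boundary contribution is a product of a weakly-$*$ convergent coefficient in $L^\infty(0,T)$ with $t\mapsto\int_{\Sigma_0}\gamma f_n\cdot(\dots)\,dv$, and you assert that the latter converges strongly in $L^1(0,T)$ without argument. Strong $L^1(\UU)$ convergence of $f_n$ does not by itself imply strong convergence of the trace on $\Gamma_0$; one would need an additional interpolation step (strong in $L^2_{t,v}L^2_y$ plus bounded in $L^2_{t,v}H^1_y$ gives strong in $L^2_{t,v}H^s_y$ for $s\in(1/2,1)$, hence strong trace), which you have not supplied.

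The paper avoids this issue entirely. Its Step~2 proves directly that $\Psi_n(t):=\int_\OO(\partial_y f_n)\partial_y\psi$ converges strongly in $L^1(0,T)$, by observing that $\Psi_n$ is bounded in $L^2(0,T)$ (from $\HH_\omega$) and that its time derivative---computed by using the equation and integrating by parts against $\partial^2_{yy}\psi$---is bounded in $L^\infty(0,T)$ when $\psi\in C^4_c(\UU)$; Rellich then yields compactness, and general $\psi\in C^1_c(\UU\cup\Gamma_-)$ are handled by a $W^{1,\infty}$ approximation. With this in hand, $\int_\UU\fraka_n(\partial_y f_n)\partial_y\psi=\int_0^T\fraka_n\,\Psi_n\,dt$ is a genuine weak-$*$/strong pairing and passes to the limit cleanly. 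Your approach is salvageable via the interpolation argument above, but as written it does not close; the paper's Aubin--Lions-type argument on $\Psi_n$ is the missing ingredient.
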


\begin{proof}[Proof of Theorem~\ref{theo-compactness}]
{\sl Step 1.} We define the sequence $(\bar f_n)$ as in \eqref{eq:KFP-interior-barf} for a localization function $\psi \in C^1_c(\UU)$. From Theorem~\ref{theo-L1Calpha-f}, we have 
$$
\| \bar f_n \|_{C^\beta(U)} \le C
$$
for some $\beta \in (0,1)$ and any $U \subset\subset \UU$. From the Ascoli theorem and a Cantor diagonal process, the sequence $(\bar f_n)$ and next the sequence $(f_n)$ 
belong to a strong compact set of $L^1(\UU)$. Up to the extraction of a subsequence, there exists thus $f \in L^\infty_\omega(\UU) \cap \HHH_\omega$ such that $f_n \to f$ strongly in $L^1(\UU)$
and $\partial_y f_n \to \partial_y f$ weakly in  $L^2(\UU)$. We may also assume that, up to the extraction of the same subsequence,   $\fraka_n \wto \fraka$  and $\frakb_n \wto \frakb$ weakly in $L^\infty(0,T)$.

\smallskip\noindent
{\sl Step 2.} We claim that for any $ \psi \in C_c^1(\UU \cup \Gamma_-)$, we have 
\beqn\label{eq:compactnessStrongMean}
\int_\OO \partial_y f_n  \partial_y \psi \to \int_\OO \partial_y f  \partial_y \psi 
\ \hbox{ strongly } \ L^1(0,T).
\eeqn
We first assume $ \psi \in C_c^4(\UU)$ and we denote 
$$
\Psi_n := \int_\OO \partial_y f_n  \partial_y \psi.
$$
We clearly have $(\Psi_n)$ is bounded in $L^2(0,T)$ from the fact that $(f_n)$ is bounded in $\HHH_\omega$. On the other hand, we compute 
\bean
{d \over dt} \Psi_n 
&=& \int_\OO \partial_y f_n  \partial_t\partial_y \psi  -  \int_\OO \partial_t f_n  \partial^2_{yy} \psi 
\\
&=& 
\int_\OO   f_n  [ J \partial^3_{vyy} \psi - K_n \partial^2_{yyy} \psi - \fraka_n \partial^4_{yyyy} \psi - \partial^3_{tyy} \psi]  , 
\eean
where the RHS is bounded in $L^\infty(0,T)$. From the Rellich theorem, we deduce that $(\Psi_n)$ is relatively compact in $L^1(0,T)$ and thus \eqref{eq:compactnessStrongMean} holds. 
For a general function $ \psi \in C_c^1(\UU \cup \Gamma_-)$, we may introduce an approximation family $(\psi_\eps)$ of $C^4(\UU)$ such that $\|\psi - \psi_\eps \|_{W^{1,\infty}} \le \eps$ for any $\eps > 0$. 
We obvious notations, we have already established that the associated sequence $(\Psi_{n,\eps})_{n \ge 1}$ is relatively compact in $L^1(0,T)$ for any $\eps > 0$. On the other hand, we compute 
$$
\| \Psi_{n,\eps} - \Psi_n \|_{L^1(0,T)} \le \int_\UU |\partial_y f_n (\partial_y \psi - \partial_y \psi_\eps) | \le \| f_n \|_{\HHH} \| \partial_y \psi - \partial_y \psi_\eps \| \lesssim \eps
$$
for any $n \ge 1$. That precisely mean that the sequence $(\Psi_n)$ is relatively pre-compact in $L^1(0,T)$, from what we classically deduce \eqref{eq:compactnessStrongMean}.

\smallskip\noindent
{\sl Step 3.} 
 Proceeding as in Step 7 of the proof of Theorem~\ref{theo:existL2}, we also know that $\gamma f_n \wto \gamma f$ weakly in $L^\infty(\Gamma)$. From the variational formulation \eqref{eq:var} of $f_n$, we have 
\beqn\label{eq:var-ffn}
 \int_\UU   f_n (- \partial_t  - J \partial_v)  \psi + \int_\UU (\fraka_n \partial_y f_n  - K_n  )  \partial_y  \psi
= \int_0^T \!\! \int_{y_F}^\infty   (J \gamma f_n)(t,v_F,y)  \psi(t,0,y) 
+ \int_\OO f_{0n} \psi(0,\cdot) 
\eeqn
for any $ \psi \in C_c^1(\UU \cup \Gamma_-)$. Because of the  strong  convergence properties of $(f_n)$ and $(\Psi_n)$ as well as  the weak convergence properties of $(\fraka_n)$, $(K_n)$ and $(\gamma f_n)$, we may pass to the limit in the above formulation, and we get that $f$ satisfies the variational formulation \eqref{eq:var}. 
\end{proof}

\section{Existence result for the nonlinear problem}  
\label{sec:ExistenceNL}

In this section, we present the proof of Theorem~\ref{theo-Exists} that we split into four steps

\medskip\noindent
{\sl Step 1.} 
We fix an admissible weight function $\omega$,  a constant $a^* > 2 (a_*+y_*)$ and   $\cc, N^* > 0$ such that $N^*(2\cc+2\cc^2) \le a^*$. 
For any function $0 \le N \in L^\infty(\R_+)$ such that $\| N \|_{L^\infty}  \le N^*$, 
we define the three functions $\fraka := a_* + \cc^2 N \in [a_*,a^*]$,  $\frakb := y_* + \cc N \in [y_*,a^*]$ and $K := \frakb-y$. 
We introduce the splitting 
\beqn\label{eq:splittingL=A+B}
\LL  := \LLL_{\fraka,K}  = \AA + \BB, \quad \AA f := M \chi_R f,
\eeqn
From the proof of Lemma~\ref{lem:GrowthLp}, we see that we may choose  $M = M(a^*) > 0$ and $R = R(a^*) > 0$ large enough in such a way that 
$$
\varpi^\BB_{\omega} = \varpi^\LL_{\omega}-M\chi_R \le - 1 
$$
and for $\omega := e^{\alpha y}$, $\alpha  > 0$, 
$$
\varpi^\BB_{\omega_\alpha} = \varpi^\LL_{\omega_\alpha} -M\chi_R \le - 1 - \alpha' y, \quad \alpha' \in (0,\alpha).
$$
We denote by $S_\BB(t,s)$ and $S_\LL(t,s)$ the associated semigroups and we use the shorthand $S_\BB(t) := S_\BB(t,0)$, $S_\LL(t) := S_\LL(t,0)$.
Repeating the proof of Lemma~\ref{lem:GrowthLp}, we know that  
\beqn\label{eq:DecaySBN}
\| S_{\BB}(t,s) f_s \|_{L^p_{\tilde\omega}} \le e^{-(t-s)} \| f_s \|_{L^p_{\tilde\omega}}, 
\quad \forall \, t \ge s \ge 0.
\eeqn
Let us introduce $\omega_\alpha = e^{\alpha y}$, $\alpha > 0$, such that $\omega_\alpha \ge \omega$.
Repeating the proof of Theorem~\ref{theo-Ultra}, we also have
\beqn\label{eq:UltraSBN}
  \| S_{\BB}(t,s) f_s \|_{L^\infty_{\omega_\alpha}} \le C_1 {e^{-(t-s)} \over (t-s)^\nu} \| f_s \|_{L^1_{ \omega_\alpha}}, 
\quad \forall \, t \ge s \ge 0, 
\eeqn
for a constant $C_1 \in (0,\infty)$ only dependent of $a^*$. 
Introducing the notations
$$
(U \star V)(t,s) := \int_s^t U(t,\tau) V(\tau,s) d\tau, 
$$
$U^{\star 1} = U$, $U^{\star k} = U^{\star (k-1)} \star U$, 
the Duhamel identity writes 
\beqn\label{eq:DuhamelIterate}
S_\LL = V + W \star S_\LL, \quad V := S_\BB + \dots +(S_\BB \AA)^{\star (N-1)} \star S_\BB, 
\quad W :=  (S_\BB \AA)^{\star N} .
\eeqn
As a consequence of \eqref{eq:DecaySBN}, \eqref{eq:UltraSBN} and the fact that $\AA : L^1 \to L^1_{\omega_\alpha}$, we have 
\beqn\label{eq:V&Wbd}
\| V (t) \|_{\BBB(L^\infty_\omega)} \le C_V e^{-t}, 
\quad
\| W (t,s) \|_{\BBB(L^1,L^\infty_\omega)} \le C_W e^{-(t-s)}.
\eeqn
For any $f_0 \in L^\infty_\omega$, we deduce 
\bean
\| S_\LL(t) f_0 \|_{L^\infty_\omega}  
&\le&  C_V e^{-t} \| f_0 \|_{L^\infty_\omega} + \int_0^t C_W e^{-(t-s)} \| S_\LL (s) f_0 \|_{L^1} ds
\\
&\le&  C_V e^{-t} \| f_0 \|_{L^\infty_\omega} + \int_0^t C_W e^{-(t-s)} ds \|  f_0 \|_{L^1}, 
\eean
and thus 
\beqn\label{eq:SLbdLinfty}
\| S_\LL(t) f_0 \|_{L^\infty_\omega}   \le  C_2 \| f_0 \|_{L^\infty_\omega},  
\eeqn
for a constant $C_2 \in (1,\infty)$ only dependent of $a^*$. 

\medskip\noindent
{\sl Step 2.} We set 
$$
C_3 := \int_{y_F} J(v_F,y) \omega^{-1} dy, \quad \eta^* := a^*/(2 C_2 C_3), 
$$
and
$$
\frak N := \{ N \in L^\infty(0,T); \, 0 \le N \le N^* \}.
$$
For $0 \le F_0 \in L^\infty_\omega$, we define $f = f_{N}(t) := S_{\LL}(t) F_0$ which is then the solution to    the linear  VCk equation 
\beqn\label{eq:fN}
\partial_t f = \LLL_{\fraka,K} f \ \hbox{in}\ (0,\infty) \times \OO,  \quad \RRR_{\fraka,K} f = 0 \ \hbox{in}\ (0,\infty) \times \Sigma, \quad f(0) = F_0 
\ \hbox{in}\  \OO.
\eeqn
Observing that 
\bean
0 \le \NN(f_{N}) &\le& C_3 \| \gamma f_N \|_{L^\infty_\omega} \le C_3 \|   f_N \|_{L^\infty_\omega} 
\\
&\le& C_3 C_2 \| F_0 \|_{L^\infty_\omega},  
\eean
 we have thus 
 $$
\Lambda : \frak N \to \frak N, \quad N \mapsto \Lambda (N) := \NN(f_{N}) , 
$$
under the smallness condition $ \| F_0 \|_{L^\infty_\omega} (\cc +\cc^2)\le \eta^*$.

\medskip\noindent
{\sl Step 3.} 
We endow $\frak N$ with the weak $L^2$ convergence. 
We claim that $\Lambda$ is continuous. Take indeed $(N_\ell)$ a sequence of $\frak N$ such that $N_\ell \wto N$ in $L^2$, 
and denote by $f_\ell$ the corresponding solution of \eqref{eq:fN} so that $(f_\ell)$ is bounded in $\frak F \cap \HHH$ with 
$$
\frak F := \{ f \in L^\infty_\omega; \, f \ge 0, \, \| f \|_{L^1} = \| F_0 \|_{L^1}, \, \| f \|_{L^\infty_\omega} \le C_2 \| F_0 \|_{L^\infty_\omega} \}.
$$
Thanks to Theorem~\ref{theo-compactness}, there exists a subsequence  $(f_{\ell_n})$ such that $f_{\ell_n} \to f$, where $f \in \frak F \cap \HHH$ is a solution to \eqref{eq:fN} associated to $N$. By uniqueness of $f$ in Theorem~\ref{theo:existL2}, it is the full sequence $(f_\ell)$ which converges to $f$. On the other hand, because $(\gamma f_\ell)$ is bounded in $L^\infty_\omega(\Gamma)$, there exists a subsequence  $(\gamma f_{\ell_n})$ such that $\gamma f_{\ell_n} \to \frak g$ weakly in $L^\infty_\omega(\Gamma)$. 
Passing to the limit in the Green formula which defines the trace function, we get that $ \frak g = \gamma f$, and thus it is the full sequence   $(\gamma f_{\ell})$ which converges to $\gamma f$. We thus deduce that 
$$
\Lambda(N_\ell) = \NN(  \gamma f _\ell ) \wto  \NN(  \gamma f  )  = \Lambda(N), 
$$
so that $\Lambda$ is indeed continuous. 

\medskip\noindent
{\sl Step 4.} The set $\frak N$ being obviously convex and compact for the weak $L^2$ convergence, we may  use the Schauder-Tykonov theorem which claims in that situation that there exists $\bar N$ such that $\Lambda (\bar N) = \bar N$. The function $F := f_{\bar N}$ is thus a solution to the nonlinear problem \eqref{eq:VCkt=0},  \eqref{eq:VCktBd1},  \eqref{eq:VCktBd2},  \eqref{eq:VCktBd3}. 
 \qed

\section{Doblin-Harris Theorem in a Banach lattice}  
\label{sec:DHtheorem}

We formulate a general abstract constructive Doblin-Harris theorem in the spirit of the ones presented in the recent works \cite[Section~6]{sanchez2023kreinrutman} and  \cite[Theorem~7.1]{CGMM**}, see also \cite{MR2857021,zbMATH07654553} for similar results and approaches in  the classical probability measures framework.
 
 The proof is a consequence of the previous estimates and of some  Doblin-Harris techniques developed in \cite{zbMATH07654553,sanchez2023kreinrutman} (see also \cite{MR2857021} and the references therein).

\smallskip

We consider a  Banach lattice $X$, which means that $X$ is a Banach space endowed with a  closed positive cone  $X_+$ (we write $f \ge 0$ if $f \in X_+$ and we recall that $f = f_+ - f_-$ with $f_\pm \in X_+$ for any $f \in X$. We also denote $|f| := f_+ + f_-$). We assume that $X$  is in duality with another Banach lattice $Y$, with  closed positive cone  $Y_+$,  so that the bracket $\langle \phi,f \rangle$ is well defined for any $f \in X$, $\phi \in Y$, and 
 that $f \in X_+$ (resp. $\phi \ge 0$) iff $\langle \psi , f\rangle \ge 0$ for any $\psi \in Y_+$ (resp. iff  $\langle \phi, g \rangle \ge 0$ for any $g \in X_+$), typically $X = Y'$ or $Y = X'$. 
  We write  
 $\psi \in Y_{++}$ if $\psi \in Y$ satisfies $\langle \psi, f \rangle > 0$ for any $f \in X_+ \backslash \{ 0 \}$. 
 
 \smallskip
 We consider a  positive and  conservative (or stochastic) semigroup $S = (S_t) = (S(t))$ on $X$, that means that  $S_t$ is a bounded linear mapping on $X$ such that 
 
 \begin{itemize}
 
 \item $S_t : X_+ \to X_+$ for any $t \ge 0$, 
 
  \item there exist  $\phi_1 \in Y_{++}$, $\| \phi_1 \| = 1$,   and a dual semigroup $S^*= S^*_t = S^*(t)$ on $Y$ such that  $S_t^* \phi_1 =  \phi_1$ for any $t \ge 0$. 
  More precisely, we assume that $S^*_t$ is a bounded linear mapping on $Y$ such that $\langle S(t) f, \phi \rangle = \langle  f, S^*(t)\phi \rangle$, for any $f \in X$, $\phi \in Y$ and $t \ge 0$, 
  and thus in particular $S^*_t : Y_+ \to Y_+$ for any $t \ge 0$. 
 \end{itemize} 
 
\smallskip

We denote by $\LL$ the generator of $S$ with domain $D(\LL)$. 
 For $\psi \in Y_+$, we define the seminorm
 $$
 [f]_\psi := \langle |f|, \psi \rangle, \ \forall \, f \in X.
  $$

\smallskip
In order to obtain a very accurate and constructive description of the longtime asymptotic behaviour of the semigroup $S$, we introduce additional assumptions.

\smallskip
$\bullet$  We first make the strong dissipativity assumption 
 \bear
\label{eq:NEWHarris-Primal-LyapunovCond}
\|   S (t) f  \|
&\le& C_0 e^{\lambda t}   \|  f  \| +   C_1 \int_0^t e^{\lambda(t-s)} [  S(s) f ]_{\phi_1}  ds,  
 \eear
for any $f \in X$ and $t \ge 0$, where $\lambda < 0$ and $C_i \in (0,\infty)$.
 
\smallskip
$\bullet$ Next, we make the slightly relaxed  Doblin-Harris  positivity  assumption 
   \bear
\label{eq:NEWDoblinHarris-primal}
&&  
S_T f \ge  \eta_{\eps,T} g_\eps [S_{T_0} f]_{\psi_\eps}, \quad \forall \, f \in X_+,  
 \eear
 for any  $T \ge T_1 >  T_0 \ge0$ and $\eps > 0$, where $ \eta_{\eps,T} > 0$, $g \in X_+ \backslash \{ 0 \}$ and $(\psi_\eps)$ is a  bounded and decreasing family  of $Y_{+} \backslash \{ 0 \}$. 
We say that the above condition is relaxed because we possibly have $T_0 > 0$ while the condition \eqref{eq:NEWDoblinHarris-primal}   holds with $T_0 = 0$ in the usual  Doblin-Harris.

\smallskip
$\bullet$ We finally assume the following  compatibility interpolation like condition  
  \bear
\label{eq:NEWHarris-LyapunovCondNpsieps}
&&  
[f]_{\phi_1} \le \xi_\eps  \| f \| + \Xi_\eps [f]_{\psi_\eps}, \ \forall \, f \in X,  \  \eps \in (0,1], 
 \eear
for two positive real numbers families $(\xi_\eps)$ and $(\Xi_\eps)$ such that $\xi_\eps\searrow0$ as $\eps \searrow 0$.

 \smallskip
 We refer to \cite{CGMM**} for a detailed discussion about these assumptions.

\begin{theo}\label{theo:KRDoblinHarris}
Consider a semigroup $S$ on a Banach lattice $X$ which satisfies the above conditions. 
Then, there exists a unique normalized positive stationary state  $f_1 \in D(\LL)$, that is  
$$
\LL f_1 = 0, \quad f_1 \ge 0, \quad   \langle \phi_1, f_1 \rangle = 1.
$$
 Furthermore, there exist some constructive constants $C \ge 1$ and $\lambda_2 < 0$ such that 
\beqn\label{eq:KRTh-constructiveRate}
\|S(t) f - \langle f,\phi_1 \rangle f_1   \| \le C e^{\lambda_2 t} 
\|  f - \langle f,\phi_1 \rangle f_1 \|
\eeqn
for any $f \in X$ and $t \ge 0$.
 \end{theo}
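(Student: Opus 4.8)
The plan is to establish Theorem~\ref{theo:KRDoblinHarris} by a Doblin--Harris contraction argument applied to a suitable equivalent norm on the hyperplane $\langle \phi_1,\cdot\rangle = 0$, exactly in the spirit of \cite{zbMATH07654553,sanchez2023kreinrutman,CGMM**}. The starting point is the observation that the conservation property $S_t^*\phi_1 = \phi_1$ makes the hyperplane $Z := \{ f \in X; \ \langle \phi_1, f\rangle = 0\}$ invariant under $S_t$, so it suffices to prove a uniform exponential contraction of $S_t$ on $Z$; existence of the stationary state $f_1$ then follows by a standard argument (apply the contraction to show the orbit of any normalized positive datum is Cauchy, or invoke the fixed point of the associated strict contraction at some fixed time $T_\star$ combined with the semigroup property).

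The key steps I would carry out, in order. First, build the Doblin minorization at the level of $S_T$ directly from assumption \eqref{eq:NEWDoblinHarris-primal}: for $f \in X_+$ one has $S_T f \ge \eta_{\eps,T}\,g\,[S_{T_0}f]_{\psi_\eps}$, so writing $f = f_+ - f_-$ and using positivity of $S_{T_0}$ and the definition of $[\cdot]_{\psi_\eps}$ one controls the overlap of $S_T f_+$ and $S_T f_-$ with the fixed profile $g$. Second, introduce for $\beta > 0$ the weighted norm
\[
\Nt f \Nt_\beta := \beta \,\|f\| + [f]_{\phi_1},
\]
which, thanks to \eqref{eq:NEWHarris-LyapunovCondNpsieps} and $\|f\| \le C\,[f]_{\phi_1}$-type bounds implicit in the lattice structure, is equivalent to $\|\cdot\|$ uniformly for $\beta$ in a bounded range. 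Third, prove that there exist $T_\star \ge T_1$, $\beta_\star > 0$ and $\kappa \in (0,1)$ such that
\[
\Nt S_{T_\star} f \Nt_{\beta_\star} \le \kappa \, \Nt f \Nt_{\beta_\star}, \qquad \forall\, f \in Z.
\]
This is the heart of the matter: on $Z$ one splits $f = f_+ - f_-$ with $\langle\phi_1,f_+\rangle = \langle\phi_1,f_-\rangle =: m$, applies the strong dissipativity \eqref{eq:NEWHarris-Primal-LyapunovCond} to get the $\beta\|\cdot\|$ part to decay like $e^{\lambda t}$ up to a $[\cdot]_{\phi_1}$ remainder, applies the Doblin minorization to gain a definite amount of mass $\eta_{\eps,T_\star} m \|g\|$ common to $S_{T_\star}f_+$ and $S_{T_\star}f_-$, which because $\langle\phi_1, S_{T_\star}f\rangle = 0$ forces $[S_{T_\star}f]_{\phi_1} \le (1 - c\,\eta_{\eps,T_\star})\, \cdot\, [\text{something}]$, and finally uses the interpolation condition \eqref{eq:NEWHarris-LyapunovCondNpsieps} to absorb the cross terms by first fixing $\eps$ small (to kill $\xi_\eps$), then fixing $T_\star$ large (to make $e^{\lambda T_\star}$ small), then tuning $\beta_\star$. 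Fourth, iterate: $\Nt S_{nT_\star} f \Nt_{\beta_\star} \le \kappa^n \Nt f \Nt_{\beta_\star}$, interpolate with the semigroup bound on $[0,T_\star]$ coming again from \eqref{eq:NEWHarris-Primal-LyapunovCond}, and pass to the norm $\|\cdot\|$ via equivalence to get \eqref{eq:KRTh-constructiveRate} with $\lambda_2 := (\log\kappa)/T_\star < 0$ and a constructive $C$. Finally, deduce existence and uniqueness of $f_1 \in D(\LL)$: uniqueness is immediate from the contraction on $Z$; for existence take any $0 \le f_0$ with $\langle\phi_1,f_0\rangle = 1$, note $S_{(n+1)T_\star}f_0 - S_{nT_\star}f_0 \in Z$ and estimate its $\Nt\cdot\Nt_{\beta_\star}$-norm to see $(S_{nT_\star}f_0)$ is Cauchy, with limit $f_1 \ge 0$, $\langle\phi_1,f_1\rangle = 1$; invariance $S_t f_1 = f_1$ follows from continuity and the contraction, and $f_1 \in D(\LL)$ with $\LL f_1 = 0$ then follows since $t \mapsto S_t f_1$ is constant.

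The main obstacle I anticipate is the third step, namely extracting a genuine contraction factor on $[\cdot]_{\phi_1}$ from the Doblin minorization when $T_0 > 0$ (the ``relaxed'' feature): one must be careful that the mass gained is measured through $[S_{T_0}f]_{\psi_\eps}$ rather than $[f]_{\psi_\eps}$, so an intermediate propagation estimate relating $[S_{T_0}f_\pm]_{\psi_\eps}$ to $\|f_\pm\|$ or $[f_\pm]_{\phi_1}$ is needed, and the bookkeeping of the three successive parameter choices ($\eps$, then $T_\star$, then $\beta_\star$) has to be arranged so that no choice undoes a previous gain. The balance of the argument is essentially the standard Harris scheme, but making every constant explicit (as the theorem demands) requires tracking the dependence of $\eta_{\eps,T_\star}$, $\xi_\eps$, $\Xi_\eps$, $C_0$, $C_1$, $\lambda$ through the estimate.
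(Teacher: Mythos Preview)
Your proposal is correct and follows essentially the same Doblin--Harris strategy as the paper, including the equivalent norm $[f]_{\phi_1} + \beta\|f\|$ and the correct identification of the key difficulty posed by the relaxed condition $T_0 > 0$. The paper resolves the obstacle you flag by a conditional argument: for $f \ge 0$ with $\|f\| \le A[f]_{\phi_1}$ one chains the Lyapunov bound $\|S_{T_0}f\| \lesssim \|f\|$, the stationarity $[S_{T_0}f]_{\phi_1} = [f]_{\phi_1}$, and the interpolation \eqref{eq:NEWHarris-LyapunovCondNpsieps} applied to $S_{T_0}f$ (with $\eps$ small) to obtain $[S_{T_0}f]_{\psi_\eps} \gtrsim [f]_{\phi_1}$, which feeds into \eqref{eq:NEWDoblinHarris-primal} to give a conditional coupling estimate $[S_Tf]_{\phi_1} \le \gamma_H [f]_{\phi_1}$ on this ``small'' set, after which the contraction in the weighted norm follows from the standard two-case Harris combination.
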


\begin{proof}[Sketch of the proof of Theorem~\ref{theo:KRDoblinHarris}]
We just allude the proof which is very similar to the proof of \cite[Theorem~7.1]{CGMM**} 

\medskip\noindent
{\sl Step 1. The Lyapunov condition.}   On the one hand, we classically have 
\beqn\label{eq:Stochastic}
[ S (t) f  ]_{\phi_1} \le [f  ]_{\phi_1}, \quad \forall \, t \ge 0, \ \forall \, f \in X. 
\eeqn
For $f \in X$, we may indeed write $f = f_+ - f_-$, $f_\pm \in X_+$, and then compute   
  \bean 
| S_t f | &\le& |S_t f_+| +  |  S_tf_- |
  \\
  &=&  S_t f_+  +  S_tf_-  = S_t |f|, 
  \eean
  where we have used  the positivity property of $S_t$ in the second line. We deduce 
    \begin{equation*}
   [S_t f ]_{\phi_1} \le  \langle S_t |f|, \phi_1 \rangle = \langle  |f|, S^*_t \phi_1 \rangle
  \end{equation*}
  and thus \eqref{eq:Stochastic}, because of  the stationarity property of $\phi_1$.

\smallskip
On the other hand, from \eqref{eq:NEWHarris-Primal-LyapunovCond} and \eqref{eq:Stochastic}, we have 
\bean
\|    S_t f  \|
\le  C_0 e^{\lambda t}   \|  f  \|  +   C_1 \int_0^t e^{\lambda (t-s)} [   f  ]_{\phi_1} ds , 
\eean
and we may thus choose $T \ge T_1$ large enough in such a way that 
\beqn\label{eq:LyapunovCondBIS}
\|    S_T  f  \| \le  \gamma_L   \|  f  \| +   K [ f  ]_{\phi_1}, 
\eeqn
with 
$$
\gamma_L := C_0 e^{\lambda T} \in (0,1), \quad
   K := C_1/\lambda.
$$

 \medskip
 \noindent
{\sl  Step~2. The Doblin-Harris condition.} Take $f \ge 0$ such that $\| f \| \le A [f]_{\phi_1}$ with $A > K/(1-\gamma_L)$.
We have 
\bean
\| S_{T_0} f\| &\le& (\gamma_L+K) \| f \|
\\
&\le& (\gamma_L+K) A [f]_{\phi_1}
\\
&=& (\gamma_L+K) A  [S_{T_0}f]_{\phi_1}
\\
&\le& (\gamma_L+K) A   (\xi_\eps \| S_{T_0}f \| + \Xi_\eps [ S_{T_0 }f ]_{\psi_\eps})  
 \eean\Black
for any $\eps > 0$, where we have used successively the growth estimate \eqref{eq:LyapunovCondBIS} in the first line, the condition on $f$ in the second line, 
the stationarity property of $\phi_1$ in the third line and the interpolation inequality \eqref{eq:NEWHarris-LyapunovCondNpsieps} in the last line. Choosing $\eps > 0$ small enough, we immediately obtain 
\bean
\| S_{T_0}f \| \le 2 (\gamma_L+K) A   \Xi_\eps [ S_{T_0 }f ]_{\psi_\eps}. 
 \eean
Together with 
$$
  [f]_{\phi_1} =   [S_{T_0}f]_{\phi_1}  \le 
   \| S_{T_0}f \|
$$
and the relaxed  Doblin-Harris  positivity condition \eqref{eq:NEWDoblinHarris-primal},  we conclude to the conditional Doblin-Harris  positivity estimate    
$$ 
 S_T f \ge c g_\eps [  f]_{\phi_1}
$$ 
for all $T\geq T_1$, with  $c^{-1} = c^{-1}_A := 2 (\gamma_L+K) A   \Xi_\eps \eta^{-1}_{\eps,T}  $. 
We may now classically improve the non-expensive estimate \eqref{eq:Stochastic} on the set $\NN := \{ f \in X; \langle \phi_1, f \rangle = 0 \}$. 
Take indeed $f \in \NN$ such that $\| f \| \le A [f]_{\phi_1}$. Observing that $[f_\pm]_{\phi_1} = [f]_{\phi_1}/2$ and thus  $\| f _\pm \| \le \| f \| \le 2A[f_\pm]_{\phi_1}$, the previous estimate tells us that 
$$
S_T f_\pm \ge \varrho g_\eps  \quad \varrho := c_{2A}  [  f]_{\phi_1}.
$$
Slightly modifying the arguments of Step~1, we compute now  
  \bean 
| S_t f | &\le& |S_t f_+ - \varrho g_\eps | +  |  S_tf_-  - \varrho g_\eps  |
  \\
  &=&   S_t |f| -  2 \varrho g_\eps. 
  \eean
  We deduce 
$$
   [S_t f ]_{\phi_1} \le    \langle  |f|,   \phi_1 \rangle - 2  \varrho \, \langle \phi_1, g_\eps \rangle  , 
 $$
 and thus   conclude to the {\it conditional coupling estimate}
 \beqn\label{eq:DoblinHarrisConditional}
   [S_t f ]_{\phi_1} \le    \gamma_H    [ f ]_{\phi_1}, 
\eeqn
  with $\gamma_H := 1 - 2 c_{2A} \langle \phi_1, g_\eps \rangle \in (0,1)$.  
 
\medskip
 \noindent
{\sl  Step~3.} We introduce a new equivalent norm $\Nt \cdot \Nt$ on $X$ defined by
 \begin{equation}\label{eq:newNormV}
\Nt f  \Nt    := [ f ]_{\phi_1}+ \beta \| f \|.
  \end{equation}
   
  Using the three properties \eqref{eq:Stochastic}, \eqref{eq:LyapunovCondBIS} and \eqref{eq:DoblinHarrisConditional}, 
we may prove that there exist $\beta > 0$ small enough and $\gamma \in (0,1)$ such that 
  \begin{equation}
    \label{eq:Harris-contrac}
  \Nt   S_T f   \Nt   \leq  \gamma     \Nt  f   \Nt ,   \quad
    \text{for any } f\in \NN. 
  \end{equation}
We refer to \cite[Proof of Theorem~7.1]{CGMM**} where this claim is established, and to \cite{MR2857021,zbMATH07654553} for previous variants. 
We then classically conclude, see again for instance \cite[Proof of Theorem~7.1]{CGMM**}.
\end{proof}


\section{Convergence in the large time asymptotic}  
\label{sec:asympt}
 

 In this section, we prove Theorem~\ref{theo-StabLinear} as a consequence of the previous analysis of the linear VCk equation, in particular Theorem~\ref{theo-Ultra}, and of the constructive Doblin-Harris stability result 
 stated in Theorem~\ref{theo:KRDoblinHarris}.

 \smallskip
 \begin{proof}[Proof of Theorem~\ref{theo-StabLinear}] We split the proof into fours steps, checking first that the conditions of Theorem~\ref{theo:KRDoblinHarris} are met.

 \smallskip\noindent
 {\sl Step~1. The stochastic semigroup and the strong dissipativity condition.} We proceed similarly as  during the proof of Theorem~\ref{theo-Exists}. We fix an admissible weight function $\omega$ and we consider the Banach lattice $X := L^2_\omega$.
We fix a  constant $a^* > 2 (a_*+y_*)$ and for any two constants $N,\cc \ge 0$ such that $(\cc + \cc^2)N \le a^*/2$, we set $\fraka := a_* + \cc^2 N \in [a_*,a^*]$,  $\frakb := y_* + \cc N \in [y_*,a^*]$, $K := \frakb-y$
and $\LL := \LLL_{\fraka,\frakb}$. Because of Theorem~\ref{eq:theo:existL2-1}, we know that $\LL$ generates a semigroup on $L^2_\omega$ which is associated to the linear equation \eqref{eq:VCklinear} with boundary conditions \eqref{eq:LinearVCkBd} and which is mass and  positivity conservative, so that $S_\LL$ is stochastic and $\phi_1 = 1$. 
  Introducing next the same splitting \eqref{eq:splittingL=A+B} and using \eqref{eq:DecaySBN}, the $L^2$ variant of \eqref{eq:UltraSBN}, \eqref{eq:DuhamelIterate}, the $L^2$ variant of  \eqref{eq:V&Wbd}, we deduce that 
$$
\| S_\LL(t) f_0 \|_{L^2_\omega} \le  C_V e^{-t} \| f_0 \|_{L^2_\omega} + \int_0^t C_W e^{-(t-s)} \| S_\LL (s) f_0 \|_{L^1} ds
$$
which is nothing but the strong dissipativity condition \eqref{eq:NEWHarris-Primal-LyapunovCond}.

\smallskip\noindent
 {\sl Step 2. The Doblin-Harris condition and the Doblin-Harris Theorem.} 
 For $0 \le f_0 \in L^2_\omega$, let us denote $f _t:= S_\LL(t) f_0$.  
From the Harnack inequality \eqref{eq:Harnack}, for any $T  > T_0 > 0$ and $\eps >0$, there exists  
 a constant $C \in (0,\infty)$, independent of $f_0$, so that 
\bean
f_T 
&\ge&  \inf_{\OO_\eps} f_{T}  {\bf 1}_{\OO_\eps}
\\
&\ge&  {1\over C} {1 \over |\OO_\eps|}  {\bf 1}_{\OO_\eps}
 \int_{\OO_\eps} f_{T_0}, 
\eean
what is nothing but \eqref{eq:NEWDoblinHarris-primal} with $g_\eps = \psi_\eps := {\bf 1}_{\OO_\eps}$.  
On the other hand, for any $f \in L^2_\omega$, we have 
\bean
\int_\OO |f| &=& \int_{\OO^c_\eps} |f| +  \int_{\OO_\eps} |f| 
\\
&\le& \Bigl( \int_{\OO^c_\eps} \omega^{-2} \Bigr)^{1/2} \| f \|_{L^2_\omega} + \int |f| {\bf 1}_{\OO_\eps}, 
\eean
so that \eqref{eq:NEWHarris-LyapunovCondNpsieps} holds true with 
$\Xi_\eps := 1$, $\xi_\eps := \| {\bf 1}_{\OO^c_\eps} \|_{L^2_\omega} \to 0$ because $\omega^{-1} \in L^2(\OO)$.   
As a consequence, of  Theorem~\ref{theo:KRDoblinHarris}, there exists a nonnegative and normalized steady state $\MMM_N$ which is furthermore asymptotically exponential stable.

 \smallskip\noindent
 {\sl Step~3. The fixed point argument.} 
  Take $0 \le F_* \in L^\infty_\omega$ such that $\| F_* \|_{L^1} = 1$. Choosing $\cc > 0$ small enough in such a way that $(\cc +\cc^2) \| F_* \|_{L^\infty_\omega} \le \eta^*$, we see that 
  $$
\frak F_* := \{ f \in L^\infty_\omega; \, f \ge 0, \, \| f \|_{L^1} = 1, \, \| f \|_{L^\infty_\omega} \le C_2 \| F_* \|_{L^\infty_\omega} \}
$$
is not empty because $F_* \in \frak F_*$.  From Steps 1 \& 2, we may apply Theorem~\ref{theo:KRDoblinHarris} and we deduce that  $S_\LL(t) F_* \to \MMM_N$ as $t \to \infty$. 
On the other hand, from \eqref{eq:SLbdLinfty}, 
we have $S_\LL(t) F_* \in \frak F_*$ for any $t \ge 0$.
We deduce that $\MMM_N \in \frak F_*$, and in particular 
\bean
0 \le \NN(\MMM_{N}) \le C_3 \| \gamma \MMM_N \|_{L^\infty_\omega} \le C_3 \|   \MMM_N \|_{L^\infty_\omega}  \le C_3 C_2 \| F_* \|_{L^\infty_\omega} \le {C_3 C_3 \over \cc + \cc^2} \eta^* = N^*. 
\eean
 We have thus built a mapping 
$$
\Lambda_* : [0,N^*] \to  [0,N^*] , \quad N \mapsto \Lambda_*(N) :=  \NN(\MMM_N)
$$
From Step 3 in the proof of  Theorem~\ref{theo-Exists}, the mapping $\Lambda$ is continuous. As a consequence, there exists at least a fix point $N^\sharp \in [0,N^*]$, so that $\Lambda_*(N^\sharp) = N^\sharp$. 
The function $\MMM := \MMM_{N^\sharp}$ is then a nonnegative normalized stationary solution to the nonlinear VCk equation \eqref{eq:VCk}-\eqref{eq:VCkBd} and it is linearly asymptotically exponential stable by construction.  
 \end{proof}

 \bigskip

\paragraph{\textbf{Acknowledgments.}}
The authors warmly thank Delphine Salort  
and  François Murat  for the enlightening discussions during the elaboration of this paper. 
This work is supported by the ANR project ChaMaNe (ANR-19-CE40-0024).

\bigskip
\bigskip
\bibliographystyle{plain}

\end{document}